\documentclass[11pt,english]{article}
\usepackage[margin=1in]{geometry}
\usepackage[utf8]{inputenc}
\usepackage[T1]{fontenc} 
\usepackage{amsmath,amssymb}
\usepackage{amsthm}
\usepackage{authblk}
\usepackage{mathtools}
\usepackage{hyperref}
\usepackage{cleveref}
\usepackage[inline]{enumitem}
\usepackage{fourier}
\usepackage[textsize=small]{todonotes}
\usepackage{graphicx} 
\usepackage[linesnumbered,commentsnumbered,ruled,vlined]{algorithm2e}

\usepackage{cleveref}
\usepackage[inline]{enumitem}
\usepackage{fourier}
\usepackage[textsize=small]{todonotes}
\setuptodonotes{inline}

\usepackage{thmtools}
\usepackage{thm-restate}

\usepackage[many]{tcolorbox}
\newtcolorbox[
    auto counter,
    number within=section,
    crefname={subroutine}{Subroutine}]%
  {subroutineBox}[2][]{
    colback=white,
    code={\def\mytitle{#2}},
    title={\bf Oracle \thetcbcounter.} \mytitle,%
    sharp corners,
    #1}

\setlist[enumerate]{label=(\roman*)}

\newtheorem{theorem}{Theorem}
\newtheorem{proposition}[theorem]{Proposition}
\newtheorem{lemma}[theorem]{Lemma}
\newtheorem{corollary}[theorem]{Corollary}
\newtheorem{definition}[theorem]{Definition}

\newcommand{\R}{\mathbb{R}}
\newcommand{\Z}{\mathbb{Z}}

\newcommand{\zero}{\mathbf{0}}
\newcommand{\one}{\mathbf{1}}

\newcommand{\slc}{\mathrm{SLC}}
\newcommand{\rk}{\mathrm{rk}}
\newcommand{\im}{\mathrm{im}}

\newcommand{\aug}{\mathrm{aug}}
\newcommand{\supp}{\mathrm{supp}}

\newcommand{\elementary}[1]{\mathcal{E}(#1)}
\newcommand{\mcp}{\mathfrak{m}}
\newcommand{\poly}{\mathrm{poly}}
\newcommand{\pr}[2]{\left\langle #1, #2 \right\rangle}
\newcommand{\mx}{x^{\mathfrak{m}}}
\newcommand{\cx}{x^{\mathrm{cp}}}
\newcommand{\ms}{s^{\mathfrak{m}}}

\DeclareMathOperator*{\argmin}{arg\,min}

\title{On Circuit Diameter and Straight Line Complexity}
\author[1]{Daniel Dadush}
\affil[1]{Centrum Wiskunde \& Informatica, The Netherlands}
\author[2]{Stefan Kober}
\affil[2]{Université Libre de Bruxelles, Belgium}
\author[3]{Zhuan Khye Koh}
\affil[3]{Boston University, USA}

\date{\today}

\begin{document}

\maketitle

\begin{abstract}
The \emph{circuit diameter} of a polyhedron is the maximum length (number of steps) of a shortest circuit walk between any two vertices of the polyhedron.
Introduced by Borgwardt, Finhold and Hemmecke (SIDMA 2015), it is a relaxation of the \emph{combinatorial diameter} of a polyhedron.
These two notions of diameter lower bound the number of iterations taken by circuit augmentation algorithms and the simplex method respectively for solving linear programs.

Recently, an analogous lower bound for path-following interior point methods was introduced by Allamigeon, Dadush, Loho, Natura and Végh (SICOMP 2025).
Termed \emph{straight line complexity}, it refers to the minimum number of pieces of any piecewise linear curve that traverses a specified neighborhood of the central path.

In this paper, we study the relationship between circuit diameter and straight line complexity.
For a polyhedron $P:=\{x\in \R^n: Ax = b, x\geq \zero\}$, we show that its circuit diameter is up to a $\poly(n)$ factor upper bounded by the straight line complexity of linear programs defined over $P$.
This yields a strongly polynomial circuit diameter bound for polyhedra with at most 2 variables per inequality.
We also give a circuit augmentation algorithm with matching iteration complexity.

\end{abstract}
\section{Introduction}
The \emph{simplex} method is probably the most well-known method for solving linear programs (LPs).
Starting from an initial vertex, it moves from vertex to vertex along edges of the polyhedron towards an optimal vertex.
This method is an example of \emph{circuit augmentation} algorithms -- a general algorithmic scheme for solving LPs. 
Like the simplex method, given an initial feasible point, a circuit augmentation algorithm proceeds through a sequence of points on the boundary of the polyhedron. 
However, instead of only moving along edges, the algorithm is allowed to travel along any direction parallel to a maximal subset of facets. 
So, its trajectory may pass through the interior of the polyhedron.

\emph{Interior point methods (IPMs)} are another family of LP algorithms that move in the interior of the polyhedron.
Unlike circuit augmentation algorithms, they always stay far away from the boundary.
In this paper, we compare these two algorithmic frameworks via natural lower bounds that govern their performance.
Ultimately, we are interested in the following question:

\begin{center}
\fbox{
\begin{minipage}{0.92\textwidth}
\centering
\parbox{0.92\textwidth}{
\centering
\smallskip
\itshape Can an algorithm that walks from boundary point to boundary point perform as well as an algorithm that moves in the `deep' interior of the polyhedron?
\smallskip}
\end{minipage}
}
\end{center}
More formally, consider an LP in standard equality form
\begin{equation}\label{lp}
    \min\{\pr{c}{x}:Ax = b, x\geq \zero\},
\end{equation}
where $A\in \R^{m\times n}$, $\rk(A) =m$ and $b\in \R^m$.
Let $P$ denote the feasible region of \eqref{lp}.
For $W:=\mathrm{ker}(A)\subseteq\R^n$, we call $h\in W\setminus\{\zero\}$ an \emph{elementary vector} if it is support-minimal, i.e., there is no $h'\in W\setminus\{\zero\}$ such that $\mathrm{supp}(h')\subsetneq\mathrm{supp}(h)$.
The support of an elementary vector $\supp(h)\subseteq [n]$ is called a \emph{circuit}.
Let $\mathcal E(A)\subseteq W$ and $\mathcal C(A)\subseteq 2^n$ denote the set of elementary vectors and circuits of $A$ respectively.

In a circuit augmentation algorithm, the allowable movement directions are precisely $\mathcal{E}(A)$.
In every iteration $t\geq 0$, the algorithm computes an elementary vector $h\in \mathcal E(A)$ based on a \emph{pivot rule}, and updates the current solution $x^{(t)}$ to $x^{(t+1)} := x^{(t)} + \alpha h$, where $\alpha > 0$ is the maximal step size that preserves feasibility.
The elementary vector $h$ is called an \emph{augmenting direction} if $\pr{c}{h}<0$ and $\alpha>0$.
If no augmenting direction exists, then $x^{(t)}$ is optimal and the algorithm terminates.
The sequence of generated points $x^{(0)}, x^{(1)}, \dots$ is called a \emph{circuit walk}.

Clearly, the number of iterations taken by \emph{any} circuit augmentation algorithm is lower bounded by the maximum length (number of steps) of a shortest circuit walk between any two vertices of $P$.
Introduced by Borgwardt, Finhold and Hemmecke~\cite{BFH15}, this latter quantity is known as \emph{circuit diameter}.
It is a relaxation of the standard \emph{combinatorial diameter} --  the diameter of the vertex-edge graph of $P$.
Analogously, the combinatorial diameter lower bounds the number of iterations taken by the simplex method.
Currently, the best upper bound on the combinatorial diameter is quasipolynomial~\cite{KK92,T14,S19}, having the form $m^{\log O((n-m)/\log (n-m))}$.

\paragraph{Interior Point Methods}

While circuit augmentation algorithms iterate through points on the boundary of $P$, interior point methods (IPMs) solve \eqref{lp} by staying in the `deep' interior of $P$.
Path-following IPMs reach an optimal solution by following a smooth curve called the \emph{central path}.
Assuming $P$ is bounded and has a strictly feasible solution, this is the parametric curve $\cx:\R_{>0} \to \R^n_{>0}$ defined by
\begin{equation}\label{eq:cp}
    \cx(\mu) := \argmin_{x\in P} \left\{\pr{c}{x} - \mu\sum_{i=1}^n \log (x_i)\right\}.
\end{equation}
The second term in \eqref{eq:cp} is called the \emph{logarithmic barrier}, which pushes the central path away from zero on every coordinate.
As $\mu \to 0$, $\cx(\mu)$ converges to an optimal solution of \eqref{lp}.
Path-following IPMs maintain iterates in a certain neighborhood of the central path while geometrically decreasing $\mu$, and hence the optimality gap.
They can be implemented to run in polynomial time.
Standard analyses yield a bound of $O(\sqrt{n}L)$ on the number of iterations, where $L$ is the total encoding length of $(A,b,c)$. 

\paragraph{Straight Line Complexity}

The trajectory of a path-following IPM is a piecewise-linear curve in a neighborhood of the central path, whose number of pieces corresponds to the number of iterations.
Thus, the minimum number of pieces of \emph{any} piecewise-linear curve in the neighborhood provides a lower bound on the number of iterations taken by the IPM.
In a surprising result, Allamigeon, Benchimol, Gaubert and Joswig~\cite{ABGJ18} constructed a parametric family of LPs such that for suitably large parameter values, any piecewise-linear curve in the neighborhood of the central path has exponentially many pieces.
This result was later generalized to arbitrary self-concordant barriers~\cite{AGV22}.

Recently, Allamigeon, Dadush, Loho, Natura and Végh~\cite{ADL25} complemented this lower bound by giving an IPM whose number of iterations matches it up to $\poly(n)$ factors.
The guarantees of the IPM are stated in terms of a combinatorial proxy of the central path, which we elaborate below.

Let $v^*$ denote the optimal value of \eqref{lp}.
For $g\geq 0$, let
\begin{equation}
    P_g:=\{x\in P:\langle c,x\rangle\le v^* + g\}
\end{equation}
be the feasible sublevel set with optimality gap at most $g$.
Assuming $P_g$ is bounded, the \emph{max central path} is the parametric curve $\mx:\R_{\geq 0}\to \R^n_{\geq 0}$ defined by
\begin{equation}
    \mx_i(g) := \max\{x_i:x\in P_g\} \qquad \qquad \forall i\in [n].
\end{equation}
For each $i\in [n]$, $\mx_i$ is a piecewise-linear, concave and non-decreasing function.

For $\eta\in (0,1]$, we say that a function $f:\R_{\geq 0}\to \R^n_{\geq 0}$ lies in the \emph{$\eta$-neighborhood} of $\mx$ if $\eta \mx \leq f\leq \mx$.
For the logarithmic barrier, its central path $\cx$ lies in the $(1/2n)$-neighborhood of $\mx$.
More generally, for any $\nu$-self-concordant barrier, its central path lies in the $\Omega(1/\nu)$-neighborhood of $\mx$~\cite{ADL25}.
For this reason, the max central path is a good proxy of the central path.
Next, we need the following definition.

\begin{definition}
Let $\phi:\R_{\geq 0}\to\R_{\geq 0}$ be a function and $\eta\in(0,1]$.
The \emph{straight line complexity} of $\phi$ with respect to $\eta$, denoted $\slc_\eta(\phi)$, is the minimum number of pieces of a continuous piecewise linear function $\psi:\R_{\geq 0}\to \R_{\geq 0}$ satisfying $\eta \phi \leq \psi \leq \phi$. 
For $g\geq 0$, we write $\slc_\eta(\phi,g)$ to indicate the analogous quantity for approximating $\phi$ in the interval $[0,g]$. 
\end{definition}

Recall that the trajectory of an IPM is a piecewise-linear curve in a neighborhood of $\cx$.
If it lies in the $\eta$-neighborhood of $\mx$, then its number of pieces is at least $\max_{i\in [n]} \slc_\eta(\mx_i)$. 
The negative result of Allamigeon et al.~\cite{ABGJ18} shows that for any $\eta\in (0,1]$, there exists an LP with $n$ variables and $O(n)$ constraints such that $\max_{i\in [n]} \slc_\eta(\mx_i) = \Omega(2^n)$.
On the positive side, Allamigeon et al.~\cite{ADL25} gave an IPM whose number of iterations is bounded by the sum of straight line complexities of every coordinate of the max central path, i.e.,
\[O\left(\inf_{\eta\in (0,1]} \sqrt{n}\log\left(\frac{n}{\eta}\right) \sum_{i=1}^n \slc_\eta(\mx_i)\right).\]
We remark that the neighborhood parameter $\eta$ is not important, as one can show that for $0<\eta<\eta'<1$, $\slc_{\eta'}(\mx_i) = O(\log(1/\eta)/\log(1/\eta')) \slc_\eta(\mx_i)$. 

\subsection{Our Contributions}

Even though circuit diameter and straight line complexity are two geometric quantities that govern the performance of two broad classes of LP algorithms, not much is known about their relationship.
From \cite{KK92,ABGJ18}, we know that the circuit diameter is smaller than the straight line complexity in the worst case (quasipolynomial vs.~exponential).
However, this is unsatisfactory as it does not provide an instance-by-instance comparison between the two quantities.

Our first result shows that for any LP, the length of a shortest circuit walk from any feasible point to the optimal face can be bounded in terms of the straight line complexity.

\begin{theorem}\label{main}
    Given a bounded LP in the form \eqref{lp}, let $F^*$ be its optimal face.
    For any feasible point $x$ with optimality gap $g\geq 0$, the length of a shortest circuit walk from $x$ to $F^*$ is
    \[O\left(n^2 \log(n) \sum_{i\in [n]} \slc_{1/2}(\mx_i,g) \right).\]
\end{theorem}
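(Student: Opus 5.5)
We first pin down the pattern of the walk. Starting from $x^{(0)}=x$ with gap $g_0=g$, we perform \emph{phases}. Each phase reduces the current gap by a constant factor or drives some coordinate to a new "regime", and costs $O(n\log n)$ circuit steps. We then charge the phases to breakpoints of near-optimal piecewise-linear approximants $\psi_i$ of $\mx_i$ on $[0,g]$ with $\tfrac12\mx_i\le\psi_i\le\mx_i$. Together with $n$ factors from conformal decomposition, this gives the bound $O(n^2\log n\sum_i\slc_{1/2}(\mx_i,g))$.

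\textbf{Step 1 (one phase via conformal decomposition).} Let $y$ be the current iterate with gap $g'$, and let $z\in P_{g'/2}$ (or $z\in F^*$) be a target point. Write $z-y=\sum_{k\le n}h^k$ as a conformal sum of elementary vectors. This is a standard fact: the sum has at most $n$ terms, all sign-compatible with $z-y$, so every partial sum $y+\sum_{k\in S}h^k$ is feasible. Some $h^k$ has $\pr{c}{h^k}\le \pr{c}{z-y}/n$. Augmenting maximally along it (step size $\ge1$) decreases the gap by at least a $(1-\frac{1}{2n})$ factor. Iterating this gives $O(n\log(\cdot))$ steps per constant-factor gap reduction, as in the classical analysis of steepest-descent circuit augmentation.

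The issue is that $\log(g/g_{\text{final}})$ is unbounded, and this is exactly where straight line complexity must enter.

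\textbf{Step 2 (linking to max central path).} We choose the target $z$ cleverly. On an interval $[g_1,g_2]$ where every $\psi_i$ is linear, we expect the max central path to be "essentially linear". Concretely, the point $\bar x(g)=\frac1n\sum_i x^{(i)}(g)$, with $x^{(i)}(g)$ attaining $\mx_i(g)$, satisfies $\bar x\ge \mx/n$. On such an interval, the segment from a point near $\bar x(g_2)$ toward a point near $\bar x(g_1)$ stays feasible and is coordinatewise within factor $O(n)$ of $\mx$. Then a conformal decomposition of the displacement $\bar x(g_1)-y$ allows a jump from gap $g_2$ to gap $\approx g_1$ in $O(n\log n)$ circuit steps, independent of $g_2/g_1$.

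The key claim to prove is this. If $y$ satisfies $y\ge \mx(g_2)/\poly(n)$ on the relevant coordinates, then the largest-cost-decrease circuit steps reduce the gap from $g_2$ to $g_1$ within $O(n\log n)$ steps. The reason is that coordinates that must shrink have room bounded by $\mx$, which scales linearly along the piece. Each of the $n$ steps per round either reduces the gap by a constant factor relative to the linear piece, or saturates a coordinate (sets $y_i=0$). Saturation events are charged to coordinates entering new pieces.

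\textbf{Step 3 (counting).} Partition $[0,g]$ by the union of breakpoints of all $\psi_i$. This gives at most $\sum_i\slc_{1/2}(\mx_i,g)$ intervals. For each interval, Step 2 costs $O(n\log n)$ phases of $n$ steps each, for $O(n^2\log n)$ per interval. Near $g\to0$, the last interval contains $F^*$: there all $\psi_i$ with $\mx_i(0)=0$ are linear through the origin. A final conformal decomposition of $z-y$ with $z\in F^*$ then reaches $F^*$ in $n$ steps.

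The main obstacle is Step 2: showing that on a single linear piece of all the $\psi_i$, the number of circuit steps is independent of the ratio $g_2/g_1$. I would first try a potential argument using $\sum_i\log(\mx_i(g')/y_i)$ restricted to coordinates with $\mx_i(0)>0$ versus those with $\mx_i$ linear, which should decrease per step.
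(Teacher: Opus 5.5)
\textbf{There is a genuine gap, and it sits exactly at the core of the theorem.} Your Step~2 is not a detail to fill in later, and the mechanism you sketch for it does not work.

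\emph{Maximality of circuit steps.} The conformal-decomposition argument drops the requirement that circuit steps be \emph{maximal}. Writing $\bar x(g_1)-y=\sum_k h^k$ conformally gives a feasible path $y,\ y+h^1,\ y+h^1+h^2,\dots$ only if every step has length exactly $1$. A circuit walk must instead move to $\aug_P(y,h^1)$. That point generally leaves the box between $y$ and $\bar x(g_1)$, after which the remaining $h^k$ need not be feasible directions. (If non-maximal steps were allowed, any two points would be within $n$ steps of each other and the theorem would be trivial.) The same error affects your final claim that a conformal decomposition of $z-y$, $z\in F^*$, reaches $F^*$ in $n$ steps. Once you re-decompose after every maximal step, all you retain is the $(1-\tfrac{1}{2n})$ decrease of Step~1. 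That gives $O(n\log(g_2/g_1))$ steps, which is precisely the dependence you need to remove.

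\emph{The fallback heuristic.} Every maximal circuit step zeroes some coordinate, so ``saturation'' carries no information by itself. Nothing in a largest-decrease rule prevents coordinates that should stay of order $\mx_i(g_2)$ from being zeroed, raised again and zeroed again. This oscillation is exactly what must be ruled out. The potential $\sum_i\log(\mx_i(g')/y_i)$ becomes infinite after any step that zeroes one of its coordinates, so it cannot measure per-step progress either.

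\textbf{What the paper supplies instead.}
\begin{enumerate}
\item A linear piece of the $\psi_i$ is first turned into $\tfrac14$-polarized intervals with partition $(B,N)$, where coordinates are nearly constant versus scaling linearly with $g$.
\item Within a polarized interval, one cannot in general decrease $x_N$ without inducing large changes on $x_B$. The paper therefore subdivides the interval at the points $g_{j+1}/\sigma_i$ given by the singular values of the lifting operator $\ell^W_N$ on the rescaled kernel.
\item Only deep inside a subinterval $[\hat g,1/\sigma_{k+1}]$ can it build a certificate target $y^\circ$. This is done by projecting onto a singular subspace $V_d$ and then lifting. The target moves $x_N$ toward the expanding targets $y^{(p+1)}$ while keeping $x_B$ above a $(1-1/f_4(n))$ fraction of its value.
\item Steepest descent is replaced by a weighted ratio-circuit LP. It forbids decreasing the already-small $N$-coordinates, charges their increase against $2y^{(p+1)}_i$, and charges decreases of $B$-coordinates with weight $f_4(n)/x_i$. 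Then every maximal step either brings the gap below $f_1(n)\hat g$, or is blocked by a large $N$-coordinate hitting zero. In the second case the set of small coordinates strictly grows, so at most $n$ such steps occur.
\item A single polarized interval can contain up to $n$ singular-value subintervals. The direct per-interval cost is therefore $O(n^3\log n)$, not the $O(n^2\log n)$ per piece you assert. The stated bound is recovered only by the amortized potential $\Phi^{\mathrm{id}}$, which charges singular-value breakpoints to the changes $|N^{(i-1)}\triangle N^{(i)}|$ between consecutive partitions.
\end{enumerate}
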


Our next result is an algorithmic implementation of \Cref{main}.
In particular, we give a circuit augmentation algorithm that solves an LP in the same number of iterations as the bound in \Cref{main}.
In each iteration, the algorithm calls the \textsc{Ratio-Circuit} oracle to obtain an augmenting direction. 
This oracle takes as input a matrix $A\in \R^{m\times n}$, costs $c\in \R^n$ and weights $v,w\in (\R_{\geq 0}\cup \infty)^n$, and returns a basic optimal solution to the system
\begin{equation}\label{eq:ratio-circuit}
    \min \pr{c}{z} \quad \text{ s.t. } \quad Az = \zero,\quad \pr{v}{z^+} + \pr{w}{z^-}\leq 1,
\end{equation}
where $(z^+)_i := \max\{z_i,0\}$ and $(z^-)_i := \max \{-z_i,0\}$.
Here, we use the convention $ab = 0$ if $a = \infty$ and $b = 0$.
If bounded, a basic optimal solution is either $\zero$ or an elementary vector $z\in \mathcal{E}(A)$ that minimizes $\pr{c}{z}/(\langle v, z^+\rangle + \pr{w}{z^-}
)$.

\begin{theorem}\label{main_alg}
There exists a circuit augmentation algorithm such that given any bounded LP in the form \eqref{lp} and any feasible point $x$ with optimality gap $g\geq 0$, it returns an optimal solution using 
\[O\left(n^2 \log(n) \sum_{i\in [n]} \slc_{1/2}(\mx_i,g) \right)\]
\textsc{Ratio-Circuit} augmentation steps.
\end{theorem}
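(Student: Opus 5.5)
We plan to make the existence argument behind \Cref{main} algorithmic. It suffices to design a pivot rule, implemented with \textsc{Ratio-Circuit}, such that the current optimality gap $g_t := \pr{c}{x^{(t)}} - v^*$ obeys a potential argument charged to the pieces of the approximations of the $\mx_i$. The natural choice is a weighted ratio-circuit step: call \textsc{Ratio-Circuit} with $v = \zero$ and $w_i = 1/x^{(t)}_i$ (with $w_i=\infty$ when $x^{(t)}_i = 0$). An optimal $z$ then minimizes $\pr{c}{z}/\sum_i z^-_i/x_i$ over $\elementary{A}$. Since $x^*-x$ conformally decomposes into elementary vectors, the standard argument shows that after taking the maximal step along $z$ the gap drops by a factor $(1-1/n)$. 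This alone gives only a weakly polynomial bound $O(n\log(g/\varepsilon))$, so the main work is to make progress combinatorial.

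\textbf{Key steps.}

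1. Fix, for each $i$, a piecewise linear $\psi_i$ with $\tfrac12\mx_i\le\psi_i\le\mx_i$ on $[0,g]$, having $\slc_{1/2}(\mx_i,g)$ pieces. Partition $[0,g]$ into at most $K := \sum_i \slc_{1/2}(\mx_i,g)$ intervals on which every $\psi_i$ is linear.

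2. Show that while $g_t$ stays in one interval $[g_-,g_+]$, the algorithm spends only $O(n^2\log n)$ iterations. Within such an interval, each coordinate is sandwiched: $\mx_i(g)$ is within a factor $2$ of an affine function $a_i + b_i g$. So each $i$ is, up to constants, either ``constant-like'' ($a_i$ dominates) or ``proportional-to-$g$'' ($b_i g$ dominates), except on a subinterval where the two terms cross. That crossover covers only $O(1)$ factor-$2$ scales per coordinate.

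3. Argue that after $O(n\log n)$ ratio-circuit steps the gap shrinks by a $\poly(n)$ factor. Then some coordinate with $x_i \le \mx_i(g_t)$ of ``proportional'' type must be forced near $0$, or the gap must leave the current scale. We charge each such event to a pair (coordinate, crossover) or to a piece boundary. This gives $O(n)$ events per interval, each costing $O(n\log n)$ iterations, hence $O(n^2\log n)$ per interval and $O(n^2\log n\cdot K)$ in total.

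4. Termination: once the gap reaches the last interval, which contains $0$, all $\psi_i$ are linear near $0$. Coordinates with $\mx_i(0)=0$ are proportional to $g$. Driving them to zero, which takes $O(n\log n)$ steps per coordinate by the same argument, puts $x$ on $F^*$. The remaining optimality is certified when \textsc{Ratio-Circuit} returns $\zero$.

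\textbf{Main obstacle.} The hardest step is step 3: turning the geometric $(1-1/n)$ gap decrease into a combinatorial event. We would use the bound $x_i \le \mx_i(g_t)$, and also a lower bound: by optimality of the ratio circuit and the sandwich, any coordinate with $x_i$ much smaller than $\mx_i(g_t)$ stays small. We would prove this via a proximity lemma. If $x_i \le \mx_i(g_t)/\poly(n)$ while $\mx_i$ is proportional to $g$ on the current piece, then $x_i$ is $\poly(n)$-times small relative to $g_t$, and subsequent circuit steps increase $x_i$ by at most an $O(n)\cdot g$-proportional amount. The gap then halves every $O(n)$ steps while the relevant coordinates freeze; after $O(\log n)$ halvings per coordinate we get a new frozen coordinate. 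If this proximity lemma fails, the fallback is to replace the weights $w_i=1/x_i$ by $w_i = 1/\psi_i(g_t)$, estimated via the current iterate. This makes the ratio-circuit step directly measure progress against the max central path.
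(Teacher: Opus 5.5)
Your steps 1--2 essentially recover the polarized decomposition (Lemma~\ref{lem:polarization}). Step 3, and with it step 4, has a genuine gap: Wallacher steps plus ``freezing'' cannot traverse a long polarized interval.

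\textbf{Why step 3 fails.} On an interval $[g_-,g_+]$ where the coordinates in $N$ scale with $g$, the gap is governed by $\|x_N\|_1$ (Lemma~\ref{gap-on-N}). Crossing the interval means cutting the gap by the factor $g_+/g_-$. This factor is not bounded by any function of $n$, and it is infinite on the last interval $[0,g_1]$, where you must reach $F^*$ exactly.
\begin{enumerate}
\item Your analysis certifies only a $\poly(n)$ decrease per event, so $O(n)$ events per interval cover a range of at most $2^{O(n\log n)}$. ``Leaving the current scale'' cannot be charged to anything, since a long interval contains unboundedly many scales.
\item Freezing is not persistent. Smallness is measured relative to $\mx_i(g_t)$, which itself shrinks linearly with $g_t$ for $i\in N$. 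So a frozen coordinate stops being small once the gap drops by a further $\poly(n)$ factor, and it must be frozen again at every scale.
\item The proximity lemma fails for the step you use. With $v=\zero$ the ratio LP puts no cost on $z^+$, so one augmentation can push a small coordinate back up to the order of $\mx_i(g(x^{(t+1)}))$. This oscillation is exactly what must be ruled out.
\item Reweighting with $w_i=1/\psi_i(g_t)$ does not change this. It still penalizes only decreases, and $\psi_i$ is not available to the algorithm.
\end{enumerate}
This is consistent with McCormick and Shioura's result (the reference the paper uses for the $(1-1/n)$ guarantee) that minimum-ratio canceling is not strongly polynomial even for network flows. For such instances $\sum_i\slc_{1/2}(\mx_i)$ is strongly polynomial, so a Wallacher-only rule should not be expected to meet the bound.

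\textbf{What is missing.} You need a step that cuts the gap by an unbounded factor at once, without wrecking $B$ or the coordinates that are already small.
\begin{enumerate}
\item After rescaling so that $\mx(g_{j+1})=\one$, the obstruction is the lifting operator $\ell^W_N$. A step that substantially decreases $x_N$ must lie near the singular subspace of the small singular values; otherwise the induced change on $B$ violates feasibility.
\item Each polarized interval is therefore subdivided at the breakpoints $g_{j+1}/\sigma_i$. These (at most $n$ per interval), not per-coordinate crossovers, are the combinatorial events.
\item Deep inside a long subinterval, the paper replaces Wallacher's rule by the LP~\eqref{sys:long_steps_lp}. It forbids decreases on the small set $S_p$, charges increases on $S_p$ against moving targets $y^{(p+1)}$ on the ray through $y^{(0)}$ and $\bar x$, and penalizes decreases on $B$ by the factor $f_4(n)$.
\item A good feasible direction is exhibited via the target $y^\circ$, built by projecting onto a singular subspace $V_d$ and lifting (Lemmas~\ref{proximity-on-N}--\ref{consistency-in-B}). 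Hence every long step either reaches gap at most $f_1(n)\hat g$ or strictly enlarges the nested small sets (Proposition~\ref{scaling-up}).
\item Wallacher steps are used only to cross the short stretches near breakpoints, at cost $O(n^2\log n)$ per subinterval.
\end{enumerate}
To turn this into an algorithm you also need:
\begin{enumerate}
\item the estimates of $g$ and $\mx(g)$ from the dual output of \textsc{Ratio-Circuit} (Lemma~\ref{lem:gap_approx}, Theorem~\ref{thm:mcp_approx});
\item the guessed partition and the computed points $\bar x, y^{(0)}$;
\item the ideal potential $\Phi^{\mathrm{id}}$, which uses Lemma~\ref{lemma:partition-change} to amortize singular-value breakpoints against partition changes. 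This is what gives $n^2\log n$ rather than $n^3\log n$.
\end{enumerate}
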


Qualitatively, this signifies that circuit augmentation algorithms can achieve the convergence guarantees of path-following IPMs.
In other words, moving in the `deep' interior of the polyhedron is not more powerful than iterating along boundary points via circuits.

As an immediate corollary, the circuit diameter of a polyhedron is at most the straight line complexity of its associated LPs up to $\poly(n)$ factors.

\begin{corollary}\label{main_diameter}
For a polyhedron $\{x\in \R^n:Ax = b, x\geq \zero\}$, its circuit diameter is
\begin{equation*}
    O\left(n^2\log(n) \sup_{c\in\R^n}\left\{\sum_{i\in[n]}\slc_{1/2}(\mx_i)\right\}\right).
\end{equation*}
\end{corollary}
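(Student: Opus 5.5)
The plan is to derive \Cref{main_diameter} from \Cref{main} by a choice of objective. Fix two vertices $u,v$ of $P:=\{x\in\R^n:Ax=b,\ x\ge\zero\}$; I need a circuit walk from $u$ to $v$ of the claimed length. Take the cost vector $c$ that is $\zero$ on $\supp(v)$ and $\one$ off $\supp(v)$. Then $\pr{c}{x}\ge 0$ on $P$, and $\pr{c}{x}=0$ holds iff $\supp(x)\subseteq\supp(v)$. Because $v$ is a vertex, the columns of $A$ indexed by $\supp(v)$ are linearly independent, so $v$ is the only feasible point with this support property. Hence the LP $\min\{\pr{c}{x}:x\in P\}$ is bounded (its value $v^*=0$) and its optimal face is $F^*=\{v\}$. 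The sublevel sets $P_g$ are bounded, being contained in $\{x\in P:\sum_{i\notin\supp(v)}x_i\le g\}$; on $\supp(v)$ the coordinates are determined by the others via the independent columns, so this set is compact. Therefore $\mx$ is well defined, even when $P$ itself is unbounded.

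Now apply \Cref{main} with starting point $x=u$ and gap $g=\pr{c}{u}$. It yields a circuit walk from $u$ into $F^*=\{v\}$, that is, ending at $v$, of length
\[O\Bigl(n^2\log(n)\sum_{i}\slc_{1/2}(\mx_i,g)\Bigr).\]
Restricting the domain to $[0,g]$ only makes approximation easier. Indeed, a piecewise-linear $\psi$ on $\R_{\ge0}$ with $\tfrac12\mx_i\le\psi\le\mx_i$ restricts to one on $[0,g]$ with no more pieces, so $\slc_{1/2}(\mx_i,g)\le\slc_{1/2}(\mx_i)$. Bounding the resulting sum by the supremum over all $c\in\R^n$, and then taking the maximum over pairs $(u,v)$, gives the corollary.

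The main point to get right is that \Cref{main} really applies. It needs a bounded LP, meaning both that the optimum is attained and that the $P_g$ used to define $\mx$ are bounded, and it needs the optimal face to be exactly the target vertex. Both are settled by the support-indicator cost described above, so no further obstacle arises. One caveat is that the walk produced by \Cref{main} must be a genuine circuit walk in $P$ with maximal steps, so that it counts toward the circuit diameter. I take that as given by the statement of \Cref{main}.
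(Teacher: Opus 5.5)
Your proposal is correct and is exactly the intended derivation: the paper presents \Cref{main_diameter} as an immediate consequence of \Cref{main}, obtained by choosing an objective for which the target vertex is the unique optimum and then walking from the other vertex into the optimal face. Your support-indicator cost $c$ fills in the details the paper leaves implicit. So do your check that every sublevel set $P_g$ is compact (which makes $\mx$ well defined even when $P$ is unbounded) and your observation that $\slc_{1/2}(\mx_i,g)\le\slc_{1/2}(\mx_i)$.
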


Consequently, for a polyhedron, if the straight line complexities of its associated LPs are strongly polynomial, so is its circuit diameter.
Recently, strongly polynomial bounds were obtained for the straight line complexity of LPs in the form \eqref{lp} whose constraint matrix $A$ has at most 2 non-zeroes per column~\cite{DKNOV24}.
In particular, $\slc_{1/2}(\mx_i) =  O(mn\log n)$ for all $i\in [n]$.
This class of LPs is equivalent to the minimum-cost generalized flow problem.
Applying \Cref{main_diameter} yields a strongly polynomial circuit diameter bound for these polyhedra.

\begin{corollary}\label{diameter_genflow}
For a polyhedron $\{x\in \R^n:Ax = b, x\geq \zero\}$ where $A\in \R^{m\times n}$ has at most 2 non-zeroes per column, its circuit diameter is $O(mn^4 \log^2n)$.
\end{corollary}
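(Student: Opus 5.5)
This corollary should follow by combining Corollary~\ref{main_diameter} with the strongly polynomial straight line complexity bound of \cite{DKNOV24}. Concretely, I would show that for every cost vector $c\in\R^n$ and every $i\in[n]$, the max central path of the LP $\min\{\pr{c}{x}: Ax=b, x\ge \zero\}$ satisfies $\slc_{1/2}(\mx_i)=O(mn\log n)$. Summing over the $n$ coordinates then gives $\sum_{i\in[n]}\slc_{1/2}(\mx_i)=O(mn^2\log n)$, uniformly in $c$. Plugging this into Corollary~\ref{main_diameter} yields a circuit diameter of $O(n^2\log n\cdot mn^2\log n)=O(mn^4\log^2 n)$, as claimed.

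The main care lies in checking that the hypotheses line up.

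First, the polyhedron need not be bounded, and $\mx$ is only defined when the sublevel sets $P_g$ are bounded. For circuit diameter we only care about walks between two vertices $u,v$ of $P$. For such a pair I would pick $c$ so that $v$ is the unique optimum, for instance $c_i=1$ for $i\notin\supp(v)$ and $c_i=0$ otherwise. In the unbounded case I would additionally restrict to a bounded face or add a large bounding budget, in whatever way Theorem~\ref{main} handles this. Then I would apply Theorem~\ref{main} with starting point $u$ and the finite gap $g=\pr{c}{u}-v^*$. Since $\slc_{1/2}(\mx_i,g)\le \slc_{1/2}(\mx_i)$, the supremum form in Corollary~\ref{main_diameter} dominates.

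Second, I would confirm that the bound of \cite{DKNOV24} holds for every cost vector $c$ and every right-hand side $b$, using only the assumption that $A$ has at most two nonzeros per column. I would also confirm that it is stated for $\eta=1/2$, or for some constant $\eta$. In the latter case I would convert using the remark that changing between constant neighborhood parameters affects $\slc$ by only a constant factor.

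I expect this second check, the exact hypotheses of the cited generalized-flow result, to be the only possible obstacle. Possible mismatches include the rank assumption $\rk(A)=m$ (removable by deleting redundant rows, which does not increase the nonzeros per column) or a requirement of boundedness. After these reductions, the proof is a one-line substitution.
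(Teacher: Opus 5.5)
Your proposal is correct and is exactly the paper's argument: plug the per-coordinate bound $\slc_{1/2}(\mx_i)=O(mn\log n)$ from \cite{DKNOV24} into Corollary~\ref{main_diameter}, sum over the $n$ coordinates to get $O(mn^2\log n)$, and multiply by $n^2\log n$. One small remark is that your choice $c=\one_{[n]\setminus\supp(v)}$ already makes every sublevel set $P_g$ bounded, so the bounding-budget fallback is unnecessary. Its recession cone is $\{d\ge \zero: Ad=\zero,\ d_i=0\ \forall i\notin\supp(v)\}=\{\zero\}$, because the columns of $A$ indexed by $\supp(v)$ are linearly independent. This is fortunate, since adding a budget row would destroy the two-nonzeros-per-column structure.
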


Previously, Kalai~\cite{K92} gave an upper bound of $(n-m)^{\log_{11/10}19} = (n-m)^{30.893\dots}$ on the combinatorial diameter of generalized flow polyhedra with arc capacities, where $m$ and $n$ are the number of nodes and arcs respectively.
With arc capacities, $\slc_{1/2}(\mx_i) = O(n^2\log n)$ for all $i\in [n]$, so our bound in \Cref{diameter_genflow} becomes $ O(n^6 \log^2 n)$.

Further, it was shown in \cite{ADL25} that the straight line complexity of the primal and dual variables are essentially the same.
In particular, for the dual LP of \eqref{lp}
\begin{equation}\label{dlp}
   \max\{\langle b,y \rangle:A^\top y + s = c, s\geq \zero\},
\end{equation}
we have $\slc_\eta(\ms_i) = O(\slc_\eta(\mx_i))$ for all $i\in [n]$.
Hence, we also obtain a strongly polynomial circuit diameter bound for polyhedra with at most 2 variables per inequality.

\begin{corollary}\label{diameter_2vpi}
For a polyhedron $\{y\in \R^m:A^\top y\leq c\}$ where $A^\top \in \R^{n\times m}$ has at most 2 non-zeroes per row, its circuit diameter is $O(mn^4 \log^2n)$.
\end{corollary}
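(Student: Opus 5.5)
The plan is to reduce \Cref{diameter_2vpi} to \Cref{main_diameter} applied to a standard-form polyhedron in slack space, and then bound the straight line complexities there using the primal--dual correspondence of \cite{ADL25} together with the generalized-flow bound of \cite{DKNOV24}.

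\textbf{Step 1: pass to slack space.} Let $Q:=\{y\in\R^m:A^\top y\le c\}$. We may assume $A$ has full row rank; otherwise $Q$ contains a lineality space and we quotient it out, which changes neither vertices nor circuit walks. Then the affine map $y\mapsto s=c-A^\top y$ is injective. It sends $Q$ onto
\[
S:=\{s\in\R^n: s\in c+\im(A^\top),\ s\ge\zero\}.
\]
Choose $B$ with $\ker(B)=\im(A^\top)$. Then $S=\{s: Bs=Bc,\ s\ge\zero\}$, which is in the form \eqref{lp}. The circuits of $Q$ are the directions $d$ for which $A^\top d$ is support-minimal in $\im(A^\top)$. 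Since $\ker(B)=\im(A^\top)$, these correspond exactly to $\mathcal E(B)$. Maximal step sizes, vertices and walk lengths are also preserved. Hence the circuit diameter of $Q$ equals that of $S$.

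\textbf{Step 2: apply \Cref{main_diameter}.} By \Cref{main_diameter} it suffices to show that for every cost vector $d\in\R^n$, every coordinate of the max central path of $\min\{\pr{d}{s}: s\in S\}$ satisfies $\slc_{1/2}=O(mn\log n)$. Summing over the $n$ coordinates then gives $O(mn^2\log n)$. Multiplying by the $n^2\log n$ factor of \Cref{main_diameter} gives the claimed $O(mn^4\log^2 n)$.

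\textbf{Step 3: interpret each such LP as a dual.} Write $d=A^\top u+r$ with $r\in\ker(A)$; taking $r$ to be the orthogonal projection of $d$ onto $\ker(A)$ works. On $S$ we have $s-c\in\im(A^\top)\perp r$, so $\pr{d}{s}=\pr{u}{A^\top c}-\pr{u}{A^\top y}+\text{const}$. Thus the LP over $S$ is the problem $\max\{\pr{b}{y}: A^\top y+s=c,\ s\ge\zero\}$ with $b=Au$, up to an additive constant. This is the dual \eqref{dlp} of the primal $\min\{\pr{c}{x}: Ax=b,\ x\ge\zero\}$, and the optimality gap in $s$-space is the dual gap. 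Hence the max central path in slack space is exactly $\ms$.

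\textbf{Step 4: bound the complexities.} By \cite{ADL25}, $\slc_{1/2}(\ms_i)=O(\slc_{1/2}(\mx_i))$. The primal constraint matrix $A$ has at most 2 non-zeroes per column, since $A^\top$ has at most 2 per row. So \cite{DKNOV24} gives $\slc_{1/2}(\mx_i)=O(mn\log n)$, which completes Step 2.

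\textbf{Main obstacle.} The delicate step is checking the hypotheses of these two results. Both assume boundedness and feasibility of the LPs concerned, while the primal may be infeasible or unbounded, or $S$ may be unbounded. My first approach is to handle this by adding a large box or perturbation constraint that preserves the two-nonzeros structure. If that fails, I would restrict the supremum in \Cref{main_diameter} to costs whose sublevel sets are bounded, which should suffice because circuit walks between vertices only need such objectives (e.g.\ objectives making the target vertex uniquely optimal).
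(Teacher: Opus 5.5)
Your proposal is correct and follows the paper's own (very brief) argument. You pass to the slack-space standard form, apply \Cref{main_diameter}, identify the resulting max central path with the dual max central path $\ms$, and combine $\slc_{1/2}(\ms_i)=O(\slc_{1/2}(\mx_i))$ from \cite{ADL25} with the $O(mn\log n)$ bound of \cite{DKNOV24}.

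One slip needs fixing in Step~3: the inner products do not type-check as written ($u\in\R^m$ but $A^\top c\in\R^n$). The correct identity is $\pr{d}{s}=\pr{d}{c}-\pr{Ad}{y}$ for $s=c-A^\top y$, so $b=Ad=AA^\top u$ rather than $Au$; the conclusion is unaffected. Your second fallback in the obstacle paragraph is the right one. Restricting to costs that make the target vertex the unique optimum gives bounded sublevel sets, and strong duality then makes the primal feasible with finite optimum. This handles the boundedness hypothesis of \Cref{main_diameter}, a point the paper leaves implicit.
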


Both of these bounds are an improvement over those in terms of the circuit imbalance measure~\cite{DKNV24}, which could be unbounded for such polyhedra.

\subsection{Technical Overview}

The starting point of proving \Cref{main} is to decompose the max central path $\mx$ into  at most $2\sum_{i\in [n]}\slc_{1/2}(\mx_i)$ \emph{polarized segments}. 
A segment of the max central path $\{\mx(g):g\in [g_0, g_1]\}$ is \emph{polarized} if the variable set $[n]$ can be partitioned into $B\cup N$ such that the coordinates in $B$ are barely changing, while the coordinates in $N$ are scaling down linearly with the optimality gap $g$.
More precisely, for every $g\in [g_0,g_1]$, we have
\begin{equation}\label{eq:polarized}
\begin{aligned}
    \frac14 \mx_i(g_1) &\leq \mx_i(g) \leq \mx_i(g_1) \qquad \quad\;\;\forall i\in B \\
    \frac{g}{g_1} \mx_i(g_1) &\leq \mx_i(g) \leq \frac{4g}{g_1} \mx_i(g_1) \qquad \forall i\in N
\end{aligned}
\end{equation}
We call $[g_0,g_1]$ a \emph{polarized interval}.
Since the max central path multiplicatively approximates the central path, the latter also admits the same polarized decomposition.
The connection between straight line complexity and polarization was already pointed out in the work of \cite{ADL25}. 

Given a current point $x\in P$ with optimality gap $g$, let $[g_0,g_1]$ be the polarized interval containing $g$.
To finish the proof, it suffices to traverse the interval in $\poly(n)$ iterations, i.e., reach a point $x'\in P$ with optimality gap $g'\leq g_0$.
We will take two types of circuit steps, which can be obtained using the \textsc{Ratio-Circuit} oracle.
The first type of circuit step is obtained by setting $v = \zero$ and $w = 1/x$.
First proposed by Wallacher~\cite{W89} for minimum-cost flow, it decreases the optimality gap by a factor of at least $1-1/n$~\cite{MS00}.
So, if $g_1/g_0 \leq 2^{\poly(n)}$, then repeating this step for $\poly(n)$ iterations does the job.
We remark that for flow problems, such a minimum-ratio cycle can be found in strongly polynomial time.

On the other hand, if $g_1/g_0 \gg 2^{\poly(n)}$, then we can no longer rely on the geometric decay of the optimality gap.
Let $(B,N)$ be the bipartition associated with this polarized interval.
From \eqref{eq:polarized}, we know that the contribution of $B$ to the optimality gap in this interval is negligible.
So, the goal is to decrease the coordinates in $N$.

For simplicity of notation, we now assume that $\mx(g_1) = 1_n$ \footnote{This  is achieved by rescaling the instance: $A \rightarrow A {\rm diag}(\mx(g_1))$ and $c \rightarrow {\rm diag}(\mx(g_1)) c$.}. Unlike an IPM, a circuit step inevitably sets a coordinate to zero.
So, one needs to be careful about which coordinates are zeroed out in order to avoid oscillations.
To this end, we consider the \emph{lifting operator} $\ell^W_N:\pi_N(W)\to \pi_B(W)$
\begin{equation}\label{eq:lift}
    \ell^W_N(z) = \argmin_{(y_B,y_N)\in W}\left\{\|y_B\|: y_N = z_N\right\},
\end{equation}
where $\pi_B$ and $\pi_N$ denote the coordinate projections onto $B$ and $N$ respectively.
Using its singular values $\sigma_1\geq \sigma_2\geq \dots \geq \sigma_{\dim(\pi_N(W))}$, we further subdivide $[g_0,g_1]$ into subintervals 
\begin{equation}\label{eq:singval_intervals}
    \left[g_0,\frac{g_1}{\sigma_a}\right], \left[\frac{g_1}{\sigma_a},\frac{g_1}{\sigma_{a+1}}\right], \dots, \left[\frac{g_1}{\sigma_{b}}, g_1\right].
\end{equation}

Now, suppose that $g$ lies in some long subinterval $[g_1/\sigma_{j}, g_1/\sigma_{j+1}]$.
By \eqref{eq:polarized} and \eqref{eq:lift}, any step that significantly decreases the coordinates in $N$ must be close to the singular subspace corresponding to $\sigma_{j+1}, \sigma_{j+2},\dots,\sigma_{\dim(\pi_N(W))}$.
Otherwise, the induced change on $B$ is too big and violates feasibility.
Hence, this suggests to move as much as possible in the singular subspace to ensure significant progress.

Let $S\subseteq N$ be the set of `small' coordinates relative to their max central path values at the end of the subinterval, i.e., $S:=\{i\in[n]:x_i\leq 2^{\poly(n)}\mx_i(g_1/\sigma_{j})\}$.
In the singular subspace, we show the existence of an elementary vector $z\in \mathcal{E}(A)$ such that 1) $\pr{c}{z}\leq -g/(2n)$; 2) $z_S\ge 0$; and 3) $z_S,z_B$ are `small'. 
This motivates our second type of circuit step, where we choose $v$ and $w$ to induce these properties in an optimal solution of \eqref{eq:ratio-circuit}.
This step ensures that a coordinate in $N\setminus S$ is set to zero, while keeping the coordinates in $S$ `small'.
Thus, in $|N|$ iterations, we have $S = N$.
At that point, $g/(g_1/\sigma_{j})\leq 2^{\poly(n)}$, so we can use the first type of circuit step to traverse the subinterval in $\poly(n)$ iterations.

The augmentation scheme described above is almost fully algorithmic, except that it uses the max central path and its decomposition into polarized segments.
We cannot rely on these additional input when proving \Cref{main_alg}.
As our proof of \Cref{main} is robust to $\poly(n)$ approximations, it suffices to be able to approximate them.
A straightforward way of doing so is to simply compute the trajectory of the IPM~\cite{ADL25}.
We show that one can avoid running the IPM by estimating all the required information using $\poly(n)$ calls to \textsc{Ratio-Circuit} together with additional linear algebraic operations.
Details of the algorithmic implementation can be found in \Cref{sec:implementation}.

\subsection{Related Work}

Besides the simplex method, many network optimization algorithms can be seen as circuit augmentation algorithms.
A prominent example is the Edmonds--Karp--Dinic algorithm~\cite{D70,EK72} for maximum flow, which is an efficient implementation of the Ford-Fulkerson method~\cite{FF56}.
Bland generalized this result by giving a circuit augmentation algorithm for LP~\cite{B76,Lee89}.
Building on this work, De Loera, Hemmecke and Lee~\cite{LHL15} analyzed different pivot rules for LP, and also extending them to integer programming.
De Loera, Kafer and Sanità~\cite{LKS22} studied the computational complexity of carrying out these rules, as well as their convergence on 0/1-polytopes.

Dadush, Koh, Natura and Végh~\cite{DKNV24} upper bounded the circuit diameter in terms of the circuit imbalance measure of $A$.
They also gave a circuit augmentation algorithm with a similar iteration bound.
Circuit diameter bounds have been shown for some combinatorial polytopes such as dual transportation polyhedra~\cite{BFH15}, matching, travelling salesman, and fractional stable set polytopes~\cite{KPS19}.
Nöbel and Steiner proved that computing the circuit diameter is strongly NP-hard~\cite{NS25}, which was then strengthened to a $\Pi_2^p$-completeness result by Wulf~\cite{W25}.
Black, Nöbel and Steiner showed that approximating the length of a shortest monotone circuit walk is already hard in dimension 2~\cite{BNS25}.

\subsection{Paper Organization}
We start by providing the necessary preliminaries in \Cref{sec:prelims}.
In \Cref{sec:bound}, we give a procedure which outputs a circuit walk whose length is at most a factor $O(n)$ larger than the bound in \Cref{main}.
Then, we show how to algorithmically implement this procedure in \Cref{sec:implementation}.
Finally, we present an improved amortized analysis in \Cref{sec:amortization}, thereby proving \Cref{main} and \Cref{main_alg}.

\section{Preliminaries}
\label{sec:prelims}
If not specified, we use the $\ell_2$-norm, i.e., $\|\cdot\|:=\|\cdot\|_2$.

Let $v^*$ be the optimal value of \eqref{lp}, which we assume to be finite.
For $x\in P$, we denote its optimality gap as $g(x) := \pr{c}{x} - v^*$.
Given an elementary vector $h\in \mathcal{E}(A)\setminus \R^n_{\geq 0}$, we define $\mathrm{aug}_P(x,h) := x+\alpha h$ where $\alpha=\max\{\bar \alpha\in\R:x+\bar \alpha h\in P\}$.

For $g\geq 0$, we define $\tilde{x}^\mcp(g):=(1/n)\cdot\sum_{i\in[n]}\mathrm{argmax}\{x_i:x\in P_g\}$ as a feasible approximation of the max central path. 
Note that $\tilde{x}^\mcp(g)\in P$ by convexity.
Moreover, a simple averaging argument yields $\mx(g)/n \le \tilde{x}^\mcp(g)\le \mx(g) $.

\subsection{The \textsc{Ratio-Circuit} Oracle}\label{prelim:ratio-circuit}

We will use the following oracle to perform circuit augmentations.

\begin{subroutineBox}[label={subroutine:ratio-circuit}]{\textsc{Ratio-Circuit}$(A,u,v,w)$}
For a matrix $A\in\R^{m\times n}$, costs $u\in\R^n$ and weights $v,w\in(\R^n_{\geq 0}\cup\{\infty\})^n$, the output is a basic optimal solution to the system:
\begin{equation}
    \label{sys:min_ratio}
    \min\; \pr{u}{z} \quad \mathrm{s.t.} \quad Az =\zero,\, \quad 
    \pr{v}{z^+} + \pr{w}{z^-} \leq 1\, ,
\end{equation} 
and an optimal solution to the following dual program:
\begin{equation}
    \label{sys:min_ratio_dual}
    \max\; -\lambda \quad \mathrm{s.t.}\quad A^\top y+ s=u,\, \quad
    -\lambda v \le s\le \lambda w\, .
\end{equation}
\end{subroutineBox}

Recall that we use the convention $ab = 0$ if $a = \infty$ and $b = 0$.
Note that \eqref{sys:min_ratio} can be reformulated as an LP using auxiliary variables:
\begin{equation}
    \label{sys:min_ratio_lp}
    \begin{aligned}
        &\min\; \pr{u}{z^+} - \pr{u}{z^-} \\
        &\mathrm{s.t.} \quad Az^+ - Az^- =\zero\, , \\
        &\qquad\; \pr{v}{z^+} + \pr{w}{z^-} \leq 1\, , \\
        &\qquad\; z^+,z^-\geq \zero\, ,
    \end{aligned}
\end{equation}
and its dual LP can be equivalently written as \eqref{sys:min_ratio_dual}.
For a feasible solution $(z^+, z^-)$ to \eqref{sys:min_ratio_lp}, we define $z:= z^+ - z^-$ as the corresponding solution to \eqref{sys:min_ratio}.
By a basic optimal solution to \eqref{sys:min_ratio}, we mean the corresponding solution of a basic optimal solution to \eqref{sys:min_ratio_lp}.

\begin{restatable}{lemma}{ratiocircuitelementary}\label{ratio-circuit-elementary}
If \eqref{sys:min_ratio} is bounded, then a basic optimal solution is either $\zero$ or an elementary vector $z\in \elementary{A}$ that minimizes 
\[\frac{\pr{u}{z}}{\pr{v}{z^+}+\pr{w}{z^-}}.\]
\end{restatable}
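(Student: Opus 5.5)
The plan is to work with the LP reformulation \eqref{sys:min_ratio_lp} and show that a basic optimal solution $(z^+,z^-)$ yields $z=z^+-z^-$ which is either $\zero$ or elementary. First I will handle infinite weights. A coordinate with $v_i=\infty$ forces $z^+_i=0$, and one with $w_i=\infty$ forces $z^-_i=0$, by the convention $ab=0$ only for $b=0$. So I delete these variables and obtain a genuine LP over the polyhedron $Q=\{(z^+,z^-)\ge\zero: Az^+-Az^-=\zero,\ \pr{v}{z^+}+\pr{w}{z^-}\le 1\}$ with finite weights.

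Next I show that a basic solution has disjoint supports of $z^+$ and $z^-$, unless it arises from a trivial 2-cycle. If $z^+_i,z^-_i>0$ for the same $i$, then the vector $e_i$ in both the $z^+$ and $z^-$ blocks lies in the kernel of the equality system. If $v_i+w_i>0$, the point is a convex combination of two feasible points, contradicting basicness; if $v_i+w_i=0$, then the objective is unchanged along this direction and the LP is degenerate. To avoid the degenerate case I will pick among basic optimal solutions, or argue that such a basic solution must have only these two nonzero coordinates, so that $z=\zero$. After this reduction, $z^+=\max(z,0)$ and $z^-=\max(-z,0)$.

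The main step is to show that $z$ is support-minimal. Suppose $z\ne\zero$ is not elementary. By the conformal decomposition of kernel vectors, $z=\sum_k h^k$ with $h^k\in\elementary{A}$ conformal to $z$ (sign-compatible, with supports inside $\supp(z)$), and at least two terms are present. Each conformal $h^k$ gives a point $(h^{k+},h^{k-})$ of the cone $\{Az^+-Az^-=\zero,\ z^\pm\ge\zero\}$ supported within the support of $(z^+,z^-)$. Since the constraint $\pr{v}{z^+}+\pr{w}{z^-}$ is linear and additive over conformal sums, I will rescale the $h^k$ so that each rescaled point lies in $Q$. If the ratio constraint is tight, $(z^+,z^-)$ is then a nontrivial convex combination of distinct points of $Q$ (together with $\zero$ if some $h^k$ has zero weight), which contradicts that it is a vertex. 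The case where the constraint is slack at a nonzero basic point is impossible, because the cone's only vertex is $\zero$. This is where I expect care is needed: terms with $\pr{v}{h^{k+}}+\pr{w}{h^{k-}}=0$ are unbounded rays, and I will handle them using boundedness of \eqref{sys:min_ratio} together with distinct supports.

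Finally, for the ratio claim, I use that the LP is bounded and that its optimal value $\mathrm{OPT}$ is at most $0$, since $\zero$ is feasible. Suppose $z$ is a nonzero optimum with weight $D=\pr{v}{z^+}+\pr{w}{z^-}$. For any elementary $h$ with positive weight, scaling $h$ to weight $1$ gives a feasible point, so $\mathrm{OPT}\le \pr{u}{h}/(\text{weight of }h)$. Moreover $D=1$ whenever $\mathrm{OPT}<0$, since otherwise scaling $z$ up would improve the objective; hence $\pr{u}{z}/D=\mathrm{OPT}$, which is the minimum ratio. If $\mathrm{OPT}=0$, then the ratio of $z$ is $0$, and all ratios are nonnegative, so $z$ is again a minimizer (and the degenerate weight-zero case is excluded by boundedness).
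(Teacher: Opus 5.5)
There is a genuine gap in your second step, and the rest of the proof depends on it. Your claim that $z^+_i,z^-_i>0$ together with $v_i+w_i>0$ contradicts basicness is false. At a nonzero vertex the normalization constraint is tight, as you note yourself in the third step. So $(z^+,z^-)+t(e_i,e_i)$ is feasible only for $t\le 0$, and no convex combination arises. Concretely, take $n=1$, $A=(1)$, $v=w=1$: the feasible region of \eqref{sys:min_ratio_lp} is the segment from $\zero$ to $(\tfrac12,\tfrac12)$, and $(\tfrac12,\tfrac12)$ is a vertex with both parts positive.

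Conversely, the case $v_i+w_i=0$ that you call degenerate is the easy one. There $(z^+,z^-)\pm\varepsilon(e_i,e_i)$ are both feasible, so the point is simply not a vertex. Also, ``picking among basic optimal solutions'' is not allowed, because the lemma must hold for whichever basic optimal solution the oracle returns.

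So the statement you actually need is only asserted, not proved: a vertex with an overlapping coordinate is exactly a multiple of $(e_i,e_i)$, hence $z=\zero$. Without it your third step does not apply. The conformal pieces of $z=z^+-z^-$ sum to $(\max(z,\zero),\max(-z,\zero))$, not to the vertex $(z^+,z^-)$, so the convex combination you build is not a decomposition of the vertex.

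The fix is to run your decomposition argument in $\ker([A,-A])$ rather than $\ker(A)$, which is the order the paper uses. The paper first shows that every nonzero vertex of $\{x\ge\zero: Ax=\zero,\ \pr{c}{x}\le\alpha\}$ is elementary. It writes $x=p+q$ with $p,q$ non-parallel nonnegative kernel vectors and perturbs to $(1\pm\varepsilon)p+(1\mp\varepsilon\rho)q$, where $\rho=\pr{c}{p}/\pr{c}{q}$; if $\pr{c}{p}=0$ it uses $\tfrac12 p+q$ and $\tfrac32 p+q$ instead. It applies this to $[A,-A]$ with $c=(v,w)$. Finally, support minimality against $(e_i,e_i)\in\ker([A,-A])$ shows that an overlap forces the support to be exactly $\{i,n+i\}$.

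Your convex-combination argument also works directly here. Write $(z^+,z^-)=m(e_i,e_i)+r$ with $m=\min(z^+_i,z^-_i)$. If $r\neq\zero$, then $r$ is a nonnegative kernel vector not parallel to $(e_i,e_i)$, and the same reasoning shows the point is not a vertex.

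Two smaller points on your third step. Zero-weight conformal pieces $\hat h$ need no boundedness argument: $(z^+,z^-)\pm\varepsilon\hat h$ both stay feasible, since $\hat h\le(z^+,z^-)$ componentwise and the weight is unchanged. Also, $\zero$ belongs in the convex combination only in the slack case, not when a piece has zero weight. Your final step, deriving the minimum-ratio property from $\mathrm{OPT}\le 0$ and tightness of the normalization, is correct; the paper's proof leaves that part implicit.
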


The following standard lemma gives the guarantees of Wallacher's rule~\cite{W89} (see, for example, \cite[Lemma 2.5]{DKNV24} for a proof).

\begin{lemma}\label{ratio-geometric}
Given $x\in P$, let $h$ be the optimal solution returned by $\textsc{Ratio-Circuit}(A,c,\zero,1/x)$.
\begin{enumerate}
    \item If $\pr{c}{h} = 0$, then $x$ is optimal to \eqref{lp}.
    \item If $\pr{c}{h}<0$, then letting $x':= \aug_P(x,h)$, we have $1\leq \alpha \leq n$ for the augmentation step size and $g(x') \leq (1-1/n)g(x)$.
\end{enumerate}
\end{lemma}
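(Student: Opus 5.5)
We prove \Cref{ratio-geometric} by comparing $h$ with the direction from $x$ towards an optimal solution. The only properties of $h$ we use are that it is optimal for \eqref{sys:min_ratio} with $v=\zero$, $w=1/x$, and that \Cref{ratio-circuit-elementary} applies.

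Fix an optimal solution $x^*$ of \eqref{lp} and set $z:=(x^*-x)/n$. Then $Az=\zero$, and since $x^*\ge\zero$ we have $(x^*-x)^-\le x$ coordinatewise. Hence $\pr{1/x}{(x^*-x)^-}\le n$, so $z$ is feasible for \eqref{sys:min_ratio}. Coordinates with $x_i=0$ need a remark: there $w_i=\infty$, but $(x^*-x)^-_i=0$, so the convention $ab=0$ keeps $z$ feasible. The cost of $z$ is $\pr{c}{z}=-g(x)/n$. By optimality of $h$,
\[
\pr{c}{h}\le -\frac{g(x)}{n}.
\]
In particular, the program is bounded: for any feasible $z$ with $z_i<0$ we need $x_i>0$, so $z\ge -x$, and $\pr{c}{z}\ge \pr{c}{x+z}-\pr{c}{x}\ge v^*-\pr{c}{x}$ whenever $x+z\in P$. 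Alternatively, boundedness follows from the dual \eqref{sys:min_ratio_dual}.

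\emph{Part (i).} If $\pr{c}{h}=0$, then the display gives $g(x)\le 0$, so $x$ is optimal.

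\emph{Part (ii).} If $\pr{c}{h}<0$, then $h\neq\zero$, so by \Cref{ratio-circuit-elementary} $h$ is elementary and minimizes the ratio $\pr{c}{z}/\pr{1/x}{z^-}$. A scaling argument shows that the constraint is tight, i.e.\ $\pr{1/x}{h^-}=1$: otherwise we could scale $h$ up and strictly decrease the objective. We now bound the step size $\alpha$.

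\emph{Lower bound $\alpha\ge 1$.} Tightness gives $h_i^-\le x_i$ for every $i$, so $x+h\ge\zero$. Also $h\notin\R^n_{\ge0}$, since $\pr{c}{h}<0$ and $P$ bounded forbid a nonnegative nonzero direction in $\ker A$. Hence $\alpha$ is finite and $\alpha\ge1$.

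\emph{Upper bound $\alpha\le n$.} Since $\pr{1/x}{h^-}=1$, some $i$ has $h_i^-/x_i\ge 1/n$. For this $i$, the coordinate $x_i+\alpha h_i$ becomes negative once $\alpha>n$, so $\alpha\le n$.

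\emph{Gap decrease.} Using $\alpha\ge1$ and the display,
\[
g(x')=g(x)+\alpha\pr{c}{h}\le g(x)-\frac{g(x)}{n}=\Bigl(1-\frac1n\Bigr)g(x).
\]

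The only subtle step is the tightness of the norm constraint together with the handling of $\infty$ weights. The rest is the direct comparison with $x^*$.
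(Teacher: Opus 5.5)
Your proof is correct and follows essentially the standard argument that the paper defers to \cite{DKNV24}, which the paper also reuses in the proof of \Cref{lem:gap_approx}: you compare $h$ with the feasible direction $(x^*-x)/n$ to get $\pr{c}{h}\le -g(x)/n$, and then use the tight normalization $\pr{1/x}{h^-}=1$ to obtain $1\le\alpha\le n$. One small tidy-up: $x+h\ge\zero$ already follows from feasibility rather than tightness, and $h\notin\R^n_{\ge 0}$ follows directly from tightness ($h^-\neq\zero$), so you do not need boundedness of $P$, which the paper only assumes in the sense that the optimal value $v^*$ is finite.
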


\begin{corollary}\label{iterated-geometric-progress}
    Let $x\in P$ with optimality gap $g$, and let $0\leq g'\leq g$.
    Starting from $x$, we can reach some point $x'\in P$ with $g(x')\leq g'$ via a circuit walk of length $\lceil n\log(g/g') \rceil$ by repeatedly applying \Cref{ratio-geometric}.
\end{corollary}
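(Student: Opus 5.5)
We prove \Cref{iterated-geometric-progress} by applying \Cref{ratio-geometric} repeatedly and tracking how the optimality gap shrinks. Set $x^{(0)} := x$. As long as $g(x^{(t)}) > g'$, we call $\textsc{Ratio-Circuit}(A,c,\zero,1/x^{(t)})$ and obtain a direction $h^{(t)}$. Two cases can occur.

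\emph{Case $\pr{c}{h^{(t)}} = 0$.} By part (i) of \Cref{ratio-geometric}, $x^{(t)}$ is optimal, so $g(x^{(t)}) = 0 \le g'$ and we stop.

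\emph{Case $\pr{c}{h^{(t)}} < 0$.} Part (ii) of \Cref{ratio-geometric} applies. We set $x^{(t+1)} := \aug_P(x^{(t)},h^{(t)})$ and obtain
\[
g(x^{(t+1)}) \le (1-1/n)\, g(x^{(t)}).
\]
Each such step is a genuine circuit step: $h^{(t)}$ is an elementary vector by \Cref{ratio-circuit-elementary}, and the step size is maximal. The bound $\alpha \le n$ shows the step is finite, so $h^{(t)} \notin \R^n_{\ge 0}$ and $\aug_P$ is well defined. (Boundedness of the oracle LP follows from the weight $1/x^{(t)}$ together with $\pr{c}{z}\ge$ a finite lower bound, as in the cited lemma.)

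\emph{Counting steps.} After $k$ augmentations,
\[
g(x^{(k)}) \le (1-1/n)^k g \le e^{-k/n} g.
\]
With $k = \lceil n\log(g/g')\rceil$ (natural logarithm) we get $e^{-k/n} \le g'/g$, hence $g(x^{(k)}) \le g'$. If the walk stops earlier, whether because the gap already dropped below $g'$ or because we reached the optimum, we simply truncate it. Its length is then at most $\lceil n\log(g/g')\rceil$, and a shorter walk certainly satisfies the claim.

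\emph{Edge cases.} If $g' = 0$ and the gap never hits zero exactly, the bound reads $+\infty$ and is vacuous. In fact the walk terminates finitely, since each step lands on a point with strictly larger support of zeros or at a vertex, but no bound is claimed in that case. If $g' = g$, the bound is $0$ steps, which is trivially correct.

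The only mildly delicate point is the well-definedness of $\aug_P$ and the use of the natural logarithm; neither is a real obstacle.
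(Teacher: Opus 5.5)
Your argument is correct and is exactly the intended proof, which the paper leaves implicit: iterate \Cref{ratio-geometric}, use $(1-1/n)^k g\le e^{-k/n}g\le g'$ for $k=\lceil n\log(g/g')\rceil$, and stop early if $\pr{c}{h}=0$ certifies optimality. One small remark: your aside that for $g'=0$ the walk terminates finitely because each step enlarges the zero set is not valid. A Wallacher step may increase coordinates with $x_i=0$, since only $z_i^-$ is blocked there. Nothing in the corollary depends on that aside, however.
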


\subsection{Properties of the Max Central Path}\label{prelim:mcp-properties}

We collect a few useful properties of the max central path.
Let $\ms$ denote the max central path of the dual LP \eqref{dlp}.
The theorem below shows that $\mx$ and $\ms$ are approximately central.

\begin{theorem}[{\cite[Theorem~4.2]{ADL25}}]\label{thm:centrality}
For any $g\geq 0$, we have
\[g\leq \mx_i(g)\ms_i(g) \leq 2g \qquad \forall i\in [n].\]
\end{theorem}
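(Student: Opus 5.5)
The plan is to prove the two inequalities separately, working directly from the definitions. For $g\ge 0$, let $D_g:=\{(y,s): A^\top y+s=c,\ s\ge\zero,\ \pr{b}{y}\ge v^*-g\}$ and $\ms_i(g):=\max\{s_i:(y,s)\in D_g\}$. By strong duality, $v^*$ is also the optimal value of \eqref{dlp}.

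\textbf{Upper bound.} Take any $x\in P_g$ and any $(y,s)\in D_g$. The usual duality-gap identity gives
\[\pr{x}{s}=\pr{c}{x}-\pr{b}{y}\le (v^*+g)-(v^*-g)=2g.\]
Since $x,s\ge\zero$, every coordinate satisfies $x_is_i\le 2g$. Now choose $x$ maximizing $x_i$ over $P_g$ and $(y,s)$ maximizing $s_i$ over $D_g$; this gives $\mx_i(g)\ms_i(g)\le 2g$.

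\textbf{Lower bound.} Fix $i$ and let $M:=\mx_i(g)$. This is the value of the LP $\max\{x_i: Ax=b,\ \pr{c}{x}\le v^*+g,\ x\ge\zero\}$. Its dual is
\[\min\ \pr{b}{y}+t(v^*+g)\quad\text{s.t.}\quad A^\top y+tc\ge e_i,\quad t\ge 0,\]
and we take an optimal solution $(y,t)$ with value $M$. We then build a dual point for \eqref{dlp} with large $s_i$, splitting into cases on $t$.

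\emph{Case $t>0$.} Set $\bar y:=-y/t$ and $s:=c-A^\top\bar y=(A^\top y+tc)/t$. Then $s\ge e_i/t\ge\zero$, so $s_i\ge 1/t$. The dual objective is $\pr{b}{\bar y}=v^*+g-M/t$, so the dual gap is $\delta:=M/t-g$, and $\delta\ge 0$ by weak duality.
\begin{itemize}
\item If $\delta\le g$, then $s\in D_g$ and $M s_i\ge M/t=g+\delta\ge g$.
\item If $\delta>g$, take an optimal dual slack $s^*$ and form the convex combination $s':=\lambda s+(1-\lambda)s^*$ with $\lambda:=g/\delta$. Its gap is exactly $g$, and $s'_i\ge \lambda/t=g/(M-gt)$. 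Hence $Ms'_i\ge Mg/(M-gt)\ge g$.
\end{itemize}

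\emph{Case $t=0$.} Here $A^\top y\ge e_i$ and $\pr{b}{y}=M$. Move from an optimal dual $(y^*,s^*)$ along the ray $y^*-\alpha y$, which has slack $s^*+\alpha A^\top y\ge \alpha e_i$ and objective $v^*-\alpha M$. Choosing $\alpha=g/M$ gives gap $g$ and $s_i\ge g/M$, so again the product is at least $g$.

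\emph{Degenerate cases.} If $M=0$ and $g>0$, then $x_i=0$ on all of $P$: any $x\in P$ with $x_i>0$ could be combined with an optimal point to give a point of $P_g$ with positive $i$-th coordinate. In that case $\ms_i(g)=\infty$ and the lower bound holds under the natural convention. If $g=0$, the lower bound $0\le \mx_i\ms_i$ is trivial.

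The main obstacle is the lower bound in the case $\delta>g$, where the dual solution coming from the LP certificate has too large a gap. The fix is the convex combination with $s^*$, and the computation $M\lambda/t=Mg/(M-gt)\ge g$ should be checked carefully. The $t=0$ and $M=0$ edge cases need similar care.
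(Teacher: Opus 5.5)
Your proof is correct. Your upper bound is the same argument as the paper's. The paper writes $\pr{x^{(i)}}{s^{(i)}}=\pr{x^{(i)}}{s^*}+\pr{x^*}{s^{(i)}}\le 2g$, which is the same duality-gap identity as your $\pr{x}{s}=\pr{c}{x}-\pr{b}{y}$.

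For the lower bound the paper gives no argument and simply cites \cite[Theorem~4.2]{ADL25}. Your LP-duality proof is therefore a self-contained replacement rather than a reconstruction. Its steps check out:
\begin{itemize}
\item the dual of $\max\{x_i : x\in P_g\}$ is as you state;
\item the normalization $\bar y=-y/t$ gives a feasible dual slack with $s_i\ge 1/t$ and gap $\delta=M/t-g\ge 0$;
\item the convex combination with $s^*$ has gap exactly $g$;
\item $M-gt=\delta t>0$, which makes $Mg/(M-gt)\ge g$ valid;
\item the ray argument in the case $t=0$ works.
\end{itemize}
What your route buys is an elementary, certificate-based proof with the sharp constant $1$. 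The tempting shortcut through the logarithmic-barrier central path would not give this. From $\cx_i(\mu)s^{\mathrm{cp}}_i(\mu)=\mu$ at gap $n\mu$, you would only get $\mx_i(g)\ms_i(g)\ge g/n$.

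On the degenerate case, don't appeal to a ``natural convention''. Say explicitly that the statement presupposes $\mx_i(g)>0$ for $g>0$. Under the paper's own convention ($ab=0$ when $a=\infty$, $b=0$), that case would give the product $0<g$, and the lower bound would fail. The paper sidesteps this by deleting variables that vanish on all of $P$, as it does when setting up the analysis of \textsc{Long-Steps}. Your combination argument with an optimal point shows that these are exactly the coordinates with $\mx_i(g)=0$.
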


The upper bound above is immediate by the definition of $\mx$ and $\ms$.
For a fixed $i\in [n]$ and $g\geq 0$, letting $x^{(i)}$ and $s^{(i)}$ be feasible primal and dual solutions realizing $\mx_i(g)$ and $\ms_i(g)$ respectively, we have
\[\mx_i(g)\ms_i(g) = x^{(i)}_i s^{(i)}_i \leq \langle x^{(i)}, s^{(i)} \rangle = \langle x^{(i)}, s^* \rangle + \langle x^*, s^{(i)} \rangle \leq 2g,\]
where $x^*$ and $s^*$ are optimal primal and dual solutions.

The following lemma compares any two points of the max central path. 
The upper bound is due to the monotonicity of $\mx$, while the lower bound follows from the concavity and nonnegativity of $\mx$.

\begin{lemma}\label{lem:mcp_bounds}
For any $g'\geq g\geq 0$, we have
\[\frac{g}{g'}\mx(g') \leq \mx(g) \leq \mx(g').\]
\end{lemma}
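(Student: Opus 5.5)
The two inequalities are proved separately; both come from elementary properties of the coordinate functions $\mx_i$, applied one coordinate at a time. Fix $i\in[n]$.

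\emph{Upper bound.} For $g\le g'$ we have $P_g\subseteq P_{g'}$, because any $x\in P$ with $\pr{c}{x}\le v^*+g$ also satisfies $\pr{c}{x}\le v^*+g'$. Maximizing $x_i$ over the smaller set gives a value no larger than over the larger set, so $\mx_i(g)\le \mx_i(g')$. This is just monotonicity of $\mx_i$.

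\emph{Lower bound.} If $g'=0$ then $g=0$ and there is nothing to prove. Otherwise set $\lambda:=g/g'\in[0,1]$. Let $x'\in P_{g'}$ attain $\mx_i(g')$, and let $x^*\in P_0$ be any optimal solution, which exists since the LP is assumed to have finite optimum and bounded sublevel sets. Define $y:=\lambda x'+(1-\lambda)x^*$. Then $y\in P$ by convexity. Its objective value is
\[
\pr{c}{y}=\lambda(v^*+g(x'))+(1-\lambda)v^*\le v^*+\lambda g'=v^*+g,
\]
so $y\in P_g$. Since $x^*\ge\zero$,
\[
y_i\ge \lambda x'_i=\frac{g}{g'}\mx_i(g'),
\]
and therefore $\mx_i(g)\ge y_i\ge \frac{g}{g'}\mx_i(g')$.

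Equivalently, the lower bound is concavity of $\mx_i$ applied between $0$ and $g'$, together with $\mx_i(0)\ge 0$. The explicit convex combination above makes this self-contained.

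There is no real obstacle. The only care needed is the degenerate case $g'=0$ and the existence of maximizers, which follows because $P_{g'}$ is a bounded, closed, nonempty polyhedron.
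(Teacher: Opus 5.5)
Your proposal is correct and follows the paper's approach: the paper attributes the upper bound to monotonicity of $\mx$ and the lower bound to concavity and nonnegativity of $\mx$. Your convex combination $\frac{g}{g'}x'+\bigl(1-\frac{g}{g'}\bigr)x^*$ is exactly that concavity argument between $0$ and $g'$ written out explicitly, and you also handle the degenerate case $g'=0$.
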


Next, we define what it means for a segment of the max central path to be \emph{polarized}.

\begin{definition}
Given $\gamma\in (0,1]$ and $0\leq g_0< g_1$, we say that the max central path segment $\{\mx(g):g\in [g_0,g_1]\}$ is \emph{$\gamma$-polarized} if there exists a partition $(B,N)=[n]$ such that for every $g\in [g_0,g_1]$,
\begin{equation}\label{eq:polarized-gamma}
\begin{aligned}
    \gamma \mx_i(g_1) &\leq \mx_i(g) \leq \mx_i(g_1) \qquad \quad\;\;\forall i\in B \\
    \frac{g}{g_1} \mx_i(g_1) &\leq \mx_i(g) \leq \frac{g}{\gamma g_1} \mx_i(g_1) \qquad \forall i\in N.
\end{aligned}
\end{equation}
We say that the interval $[g_0, g_1]$ is \emph{$\gamma$-polarized}, and call $(B,N)$ its \emph{polarized partition}.
\end{definition}

Note that the upper bound for $B$ and the lower bound for $N$ always holds by \Cref{lem:mcp_bounds}. So, the interesting part of the definition is the lower bound for $B$ and the upper bound for $N$.

The following lemma decomposes the max central path into $O(\sum_{i\in [n]}\slc_{\eta}(\mx_i))$ many polarized segments.
For concreteness, we use $\eta = 1/2$ throughout the paper.

\begin{lemma}[{\cite[Lemma~4.5]{ADL25}}]\label{lem:polarization}
Let $x\in P$ with optimality gap $g$.
For any $\eta\in (0,1]$, there exist points $0 = g_0 < g_1 < \dots g_r = g$ with $r \leq 2\sum_{i\in [n]}\slc_{\eta}(\mx_i,g)$ such that each interval $[g_j, g_{j+1}]$ is $(\eta/2)$-polarized.
\end{lemma}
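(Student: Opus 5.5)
The plan is to build the decomposition greedily from a piecewise-linear approximation of each coordinate. For each $i\in[n]$, fix a continuous piecewise-linear $\psi_i$ on $[0,g]$ with $\eta\,\mx_i\le\psi_i\le\mx_i$ and $k_i:=\slc_\eta(\mx_i,g)$ pieces. Let $T$ be the union of all breakpoints of all $\psi_i$, together with $0$ and $g$. Then $|T|-1\le\sum_i k_i$, and on each interval between consecutive points of $T$ every $\psi_i$ is affine.

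The first step is to analyse a single interval $[a,b]$ on which each $\psi_i(h)=\alpha_i+\beta_i h$ is affine. We cannot simply take $(B,N)$ from the signs of $\alpha_i,\beta_i$, since $\psi_i$ need not be nonnegative-affine in a useful way near $0$. Instead, for each $i$ we call the value $h_i^*:=\alpha_i/\beta_i$ (when $\alpha_i,\beta_i>0$) the crossover point: $\psi_i$ is dominated by its constant term for $h\ge h_i^*$ and by its linear term for $h\le h_i^*$. We then split $[a,b]$ at every crossover point lying inside it, adding at most one split per coordinate per piece. This at most doubles the count, giving $r\le 2\sum_i k_i$.

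On each resulting subinterval $[g_j,g_{j+1}]$, every $i$ falls into one of three cases: $\alpha_i\ge\beta_i h$ throughout, $\beta_i h\ge\alpha_i$ throughout, or one of the terms is nonpositive. Each case must be absorbed into $B$ or $N$.

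\begin{itemize}
\item If the constant term dominates (so $\alpha_i\ge\beta_i h$), then $\psi_i(h)\in[\alpha_i,2\alpha_i]$. Hence $\mx_i(h)\ge\psi_i(h)\ge\alpha_i\ge\tfrac12\psi_i(g_{j+1})\ge\tfrac{\eta}{2}\mx_i(g_{j+1})$, which is the $B$ condition with $\gamma=\eta/2$.
\item If the linear term dominates, then $\psi_i(h)\le 2\beta_i h$ and $\psi_i(g_{j+1})\ge\beta_i g_{j+1}$. Hence $\mx_i(h)\le\psi_i(h)/\eta\le \frac{2h}{\eta g_{j+1}}\psi_i(g_{j+1})\le\frac{h}{(\eta/2)g_{j+1}}\mx_i(g_{j+1})$, which is the $N$ condition.
\item If $\beta_i\le 0$, then $\psi_i$ is nonincreasing, so $\psi_i(h)\ge\psi_i(g_{j+1})$ and $i\in B$ follows even more easily.
\item If $\alpha_i\le0$, then $\psi_i(h)\le\beta_i h\le\frac{h}{g_{j+1}}\psi_i(g_{j+1})$, which gives $N$ with an even better constant.
\end{itemize}

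The remaining conditions (the upper bound for $B$ and the lower bound for $N$) hold automatically by \Cref{lem:mcp_bounds}.

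The main obstacle is the counting. Adding a crossover split for every coordinate on every piece of $T$ would cost up to $n\cdot|T|$, not $2\sum_i k_i$. To avoid this, the splits should be charged per coordinate. Coordinate $i$ needs a split only at the crossover of its own piece, so it contributes at most one crossover per piece of $\psi_i$, i.e.\ at most $k_i$ in total. Splitting at the crossovers of other coordinates' pieces is harmless, because the classification above is stable under refinement: a subinterval of a subinterval where a term dominates still has that term dominating. So the final partition is $T$ together with all crossover points, and its size is at most $\sum_i k_i+\sum_i k_i=2\sum_i\slc_\eta(\mx_i,g)$.

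A minor technical check is the leftmost interval starting at $g_0=0$. There the $N$ condition at $h=0$ requires $\mx_i(0)=0$, which holds whenever $\psi_i(0)=\alpha_i$ is dominated by the linear term, since that forces $\alpha_i\le0$. The $B$ cases need nothing at $0$.
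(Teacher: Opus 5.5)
Your proof is correct. The paper does not prove this lemma itself; it imports it from \cite{ADL25}. Your argument is essentially the standard one for it. Splitting each affine piece $\alpha_i+\beta_i h$ of $\psi_i$ at $\alpha_i/\beta_i$ amounts to replacing that piece by $\max\{\alpha_i,\beta_i h\}$, at the cost of a factor $2$, which is where $\eta/2$ comes from. Every coordinate is then constant or scaling on each cell of the common refinement, as in the functions $h_i$ of \Cref{thm:polarized-approx}. Charging the extra split to the piece of $\psi_i$ gives the bound $2\sum_i\slc_\eta(\mx_i,g)$.

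One small slip: in the case $\alpha_i\le 0$, the step $\beta_i h\le \frac{h}{g_{j+1}}\psi_i(g_{j+1})$ is backwards, because the right-hand side equals $\beta_i h+\frac{h}{g_{j+1}}\alpha_i\le\beta_i h$. Instead, write $\psi_i(h)=\alpha_i+\beta_i h\le \frac{h}{g_{j+1}}\alpha_i+\beta_i h=\frac{h}{g_{j+1}}\psi_i(g_{j+1})$, which holds since $\alpha_i\le 0$ and $h\le g_{j+1}$.
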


\subsection{The Lifting Operator and Its Singular Subspaces}

Recall the definition of the lifting operator in the introduction.

\begin{definition}[Lifting Map and Operator] \label{def:lift}
    Given a linear subspace $W\subseteq\R^n$ and a partition $(I,J)$ of $[n]$, we define the lifting map $L_I^W:\pi_I(W)\to W$ as 
    \begin{equation*}
        L_I^W(x):=\arg\min_{w\in W}\{\|w\|: w_I=x\}.
    \end{equation*}
    Further, we define the lifting operator $\ell_I^W:\pi_I(W)\to \pi_J(W)$ as $\ell_I^W(x):=\pi_J(L_I^W(x))$.
\end{definition}

Since $\ell^W_I$ is a linear operator, it admits a singular value decomposition.

\begin{definition}[Singular value decomposition]
    Let $U\subseteq\R^n$ and $V\subseteq\R^m$ be linear subspaces.
    Let $T:U\to V$ be a linear operator and $\mathcal{M}(T)$ be its matrix representation.
    Then, $T$ admits a \emph{singular value decomposition} (SVD)
    \begin{equation*}
        \mathcal{M}(T)=\sum_{i=1}^{\rk(T)}\sigma_i(T)v_iu_i^\intercal,
    \end{equation*}
    where $v_1,\dots,v_{\rk(T)}\in V$ and $u_1,\dots,u_{\rk(T)}\in U$ are orthonormal vectors in their respective subspaces, and $\sigma_1(T)\geq \sigma_2(T) \geq \dots \geq \sigma_{\rk(T)}(T)>0$.
    We also set $\sigma_i(T):=0$ for all $\rk(T)<i\leq \dim(U)$, and $\sigma_0(T):=\infty$ (with the convention that $1/\infty=0$).
    We denote by $\sigma_{\min}(T):=\sigma_{\dim(U)}(T)$ the smallest singular value of $T$.

    Given an interval $I\subseteq\R_{\geq 0}$, we define $C_\sigma(T,I):=|\{i\in[\dim(U)]:\sigma_i(T)\in I\}|$ to count the number of singular values of $T$ in $I$.
    Note that $\sigma_i(T) \geq \alpha$ if and only if $C_{\sigma}(T,[\alpha,\infty)) \geq i$.
\end{definition}

For the algorithmic implementation in Section~\ref{sec:implementation}, we assume that we can compute the SVD exactly for the sake of simplicity.
Our framework can also handle approximate SVD computations using the techniques developed in ~\cite{ADL25}.

The concept of singular subspaces will be crucial to our analysis.

\begin{definition}[Singular subspaces]
    Let $T:U\to V$ be a linear operator.
    A linear subspace $S\subseteq U$ is called a \emph{singular subspace for $T$} if 
    \begin{equation*}
        \sigma_1(T\vert_S)\le\sigma_{\mathrm{dim}(U)-\mathrm{dim}(S)+1}(T).
    \end{equation*}
    By the min-max principle for singular values, the singular subspaces for $T$ are exactly the subspaces that attain the minimum in
    \begin{align}
        \sigma_i(T)&=\min_{\substack{\mathrm{dim}(S)\ge\mathrm{dim}(U)-i+1\\ S\subseteq U}}\ \max_{x\in S\setminus\{\zero\}}\frac{\|T(x)\|}{\|x\|}\label{sv_minmax}
    \end{align}
    for each $i\in [\dim(U)]$.
\end{definition}

The following lemma shows that projecting a vector onto a singular subspace for $\ell^W_N$ can only change the entries in $N$ in a bounded way. 
A similar statement has previously been used by Allamigeon et al. in the context of interior point methods, see~\cite[Lemma~7.8]{ADL25}.

\begin{restatable}{lemma}{projections}\label{projections}
    Given a linear subspace $W\subseteq \R^n$, let $1\le k\le \dim(\pi_N(W))$ and $d:=\dim(\pi_N(W))-k$.
    Let $V_d\subseteq\pi_N(W)$ be a singular subspace for $\ell_N^W$ of dimension $d$ and $W_d:=L_N^W(V_d)$.
    For any $z\in W$, if $z^p:=\arg\min_{w\in W_d}\|z_N-w_N\|$, then
    \begin{equation*}
        \|z_N-z_N^p\|\le \frac{\|z_B\|}{\sigma_k}.
    \end{equation*}
\end{restatable}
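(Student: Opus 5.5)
The plan is to reduce everything to the Gram operator $M:=(\ell_N^W)^*\ell_N^W$ on $\pi_N(W)$. The eigenvalues of $M$ are $\sigma_1^2\ge\dots\ge\sigma_{d+k}^2$, where $d+k=\dim(\pi_N(W))$. We then use three facts. First, $\|z_B\|$ dominates $\|\ell_N^W(z_N)\|$. Second, $z_N-z_N^p$ is the component of $z_N$ orthogonal to $V_d$. Third, $V_d$ is an invariant subspace of $M$, so its orthogonal complement carries exactly the top $k$ eigenvalues.

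\emph{Step 1 (reduction to the lifting operator).} Since $\pi_N(W_d)=\pi_N(L_N^W(V_d))=V_d$, the minimizer $z^p$ satisfies that $z^p_N$ is the orthogonal projection of $z_N$ onto $V_d$. Write $z_N=v+u$ with $v\in V_d$ and $u\in \pi_N(W)\cap V_d^\perp$, so that $\|z_N-z^p_N\|=\|u\|$.

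Next, $L_N^W(x)$ is the minimum-norm element of the affine set $\{w\in W:w_N=x\}$. Hence it is orthogonal to $K:=\{w\in W: w_N=\zero\}$. Now $z-L_N^W(z_N)\in K$ and is supported on $B$, and $\langle L_N^W(z_N),k\rangle=\langle \ell_N^W(z_N),k_B\rangle$ for $k\in K$. This gives the Pythagorean identity
\[
\|z_B\|^2=\|\ell_N^W(z_N)\|^2+\|z_B-\ell_N^W(z_N)\|^2 ,
\]
and in particular $\|z_B\|\ge\|\ell_N^W(z_N)\|$. It therefore suffices to show $\|\ell_N^W(v+u)\|\ge\sigma_k\|u\|$.

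\emph{Step 2 (invariance of the singular subspace; the main obstacle).} By definition of a singular subspace, $\langle Mx,x\rangle\le\sigma_{k+1}^2\|x\|^2$ for all $x\in V_d$, and $\dim V_d=d$. I claim this forces $V_d$ to be $M$-invariant, with $M|_{V_d}$ having eigenvalues $\sigma_{k+1}^2,\dots,\sigma_{d+k}^2$. To prove it, diagonalize $M$ and let $\lambda:=\sigma_{k+1}^2$. Let $E_{<}$, $E_{=}$ and $E_{>}$ be the spans of the eigenvectors with eigenvalue $<\lambda$, $=\lambda$ and $>\lambda$, respectively.
\begin{itemize}
\item By Cauchy interlacing for the compression $P_{V_d}MP_{V_d}$, its $j$-th smallest eigenvalue is at least the $j$-th smallest eigenvalue of $M$. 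Combined with the trace bound $\mathrm{tr}(P_{V_d}M)\ge\sum_{j\le d}\sigma_{k+j}^2$, we get $\mathrm{tr}(P_{V_d}M)=\sum_{j\le d}\sigma_{k+j}^2$ exactly. The reason is that every eigenvalue of the compression is bounded above by $\lambda$, while the bottom $d$ eigenvalues of $M$ already sum to the minimum possible.
\item Equality in the Ky Fan minimum principle then says that $V_d$ is spanned by eigenvectors of $M$.
\end{itemize}
Equivalently, one can argue directly that $V_d\supseteq E_<$ and $V_d\subseteq E_<\oplus E_=$, so that $V_d=E_<\oplus(V_d\cap E_=)$, which is $M$-invariant. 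Getting this step right, including the case of ties $\sigma_k=\sigma_{k+1}$, is the delicate part. I would first try the direct argument via $E_<$ and $E_=$, and fall back on Ky Fan if needed.

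\emph{Step 3 (conclusion).} Because $M$ is self-adjoint and $V_d$ is invariant, the complement $V_d^\perp\cap\pi_N(W)$ is also invariant. It carries the remaining eigenvalues $\sigma_1^2,\dots,\sigma_k^2$, all of which are $\ge\sigma_k^2$. Hence
\[
\langle \ell_N^W(v),\ell_N^W(u)\rangle=\langle Mv,u\rangle=0
\quad\text{and}\quad
\|\ell_N^W(u)\|^2=\langle Mu,u\rangle\ge\sigma_k^2\|u\|^2 .
\]
Therefore
\[
\|z_B\|^2\ge\|\ell_N^W(z_N)\|^2=\|\ell_N^W(v)\|^2+\|\ell_N^W(u)\|^2\ge\sigma_k^2\|u\|^2 ,
\]
which gives $\|z_N-z_N^p\|\le\|z_B\|/\sigma_k$. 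When $\sigma_k=0$ the right-hand side is $\infty$ and the bound is trivial.
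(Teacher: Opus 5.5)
Your Steps 1 and 3 are correct, and together they are exactly the paper's argument. They give $\|z_B\|\ge\|\ell_N^W(z_N)\|$, orthogonality of the images of $V_d$ and $V_d^\perp$ under $\ell_N^W$, and $\|\ell_N^W(u)\|\ge\sigma_k\|u\|$ on $V_d^\perp$. The gap is Step 2, and it cannot be filled because the claim is false. The singular-subspace condition only bounds the Rayleigh quotient of $M$ on $V_d$ from above; it does not force invariance. Take $N=\{1,2,3\}$, $B=\{4,5,6\}$, $D=\mathrm{diag}(\sqrt3,\sqrt2,1)$ and $W=\{(x,Dx):x\in\R^3\}$, so that $\ell_N^W=D$ and $M=\mathrm{diag}(3,2,1)$. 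Let $k=1$, $d=2$ and $V_2=\mathrm{span}\{e_1+e_3,e_2\}$. Every $x=a(e_1+e_3)+be_2$ has $\langle Mx,x\rangle=4a^2+2b^2=2\|x\|^2$. Hence $\sigma_1(\ell_N^W|_{V_2})=\sqrt2=\sigma_2$, and $V_2$ is a singular subspace in the paper's sense. But $M(e_1+e_3)=3e_1+e_3\notin V_2$. Moreover, $V_2$ neither contains $E_{<}=\mathrm{span}\{e_3\}$ nor lies in $E_{<}\oplus E_{=}=\mathrm{span}\{e_2,e_3\}$. Your trace step also fails: $\mathrm{tr}(P_{V_2}M)=4\neq 3=\sigma_2^2+\sigma_3^2$. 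Interlacing only gives $\ge$, and the upper bound $\lambda$ on the compression's eigenvalues does not force equality. Worse, the conclusion of the lemma itself fails here. The vector $z=(e_1-e_3,\,D(e_1-e_3))\in W$ has $z_N\perp V_2$, so $\|z_N-z_N^p\|=\sqrt2$, whereas $\|z_B\|/\sigma_1=2/\sqrt3<\sqrt2$.

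Invariance alone would not rescue Step 3 either. Take $D=\mathrm{diag}(\sqrt2,\sqrt2,1)$ and $k=1$. The invariant subspace $V_2=\mathrm{span}\{e_1,e_2\}$ is singular, yet $V_2^\perp=\mathrm{span}\{e_3\}$ carries $\sigma_3^2=1<\sigma_1^2$, and $z_N=e_3$ violates the bound. So the statement, with the min--max definition of singular subspace, is false. The paper's proof makes the same unjustified leap when it asserts that $V_d^\perp$ is spanned by right singular vectors for $\sigma_1,\dots,\sigma_k$ ``because $V_d$ is a singular subspace of dimension $d$''. The fix is to strengthen the hypothesis: require $V_d$ to be the span of right singular vectors belonging to $\sigma_{k+1},\dots,\sigma_{\dim(\pi_N(W))}$ in some SVD of $\ell_N^W$. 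This is what an SVD computation produces. Such a $V_d$ still satisfies $\|\ell_N^W(x)\|\le\sigma_{k+1}\|x\|$, which is all Lemma~\ref{consistency-in-B} uses. Under that hypothesis, $V_d$ and $V_d^\perp$ are $M$-invariant by construction, and $M|_{V_d^\perp}$ has eigenvalues $\sigma_1^2,\dots,\sigma_k^2$. Step 2 then becomes unnecessary, and your Step 3 completes the proof.
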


The following lemma describes the behavior of the singular values of the lifting operator under subspace rescaling.
\begin{lemma}[{\cite[Lemma~7.12]{ADL25}}]\label{lemma:singular-value-scaling}
    Let $W\subseteq \R^n$ be a subspace, and let $(B,N)$ be a non-trivial partition of $[n]$.
    Let $y\in\R^n_{>0}$, and let $W':=\mathrm{diag}(y)^{-1}W$.
    We define $\ell:=\ell_N^{W}$ and $\ell':=\ell_N^{W'}$ with respective singular value decomposition $\sigma:=\sigma(\ell)$ and $\sigma':=\sigma(\ell')$.
    Then, we have that 
    \begin{equation*}
        \frac{1}{\|y_B^{-1}\|_\infty\|y_N\|_\infty}\sigma'\le\sigma\le\|y_B\|_\infty\|y_N^{-1}\|_\infty\sigma'.
    \end{equation*}
\end{lemma}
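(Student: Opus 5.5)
The plan is to compare $\ell$ and $\ell'$ pointwise, via the variational characterization of the lifting operator, and then transfer the comparison to singular values through the min-max principle \eqref{sv_minmax}. Write $D:=\mathrm{diag}(y)$, so that $W'=D^{-1}W$. The map $w'\mapsto Dw'$ is a bijection $W'\to W$ that acts coordinatewise. It therefore induces a linear bijection $x'\mapsto D_N x'$ from $\pi_N(W')$ onto $\pi_N(W)$. In particular $\dim \pi_N(W)=\dim\pi_N(W')$, so both operators have the same number of singular values, counting zeros, and it suffices to compare $\sigma_i$ with $\sigma'_i$ index by index.

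\textbf{Pointwise comparison.} In \Cref{def:lift}, $w_N=x$ is fixed, so minimizing $\|w\|$ is the same as minimizing $\|w_B\|$. Hence
\[
\|\ell(x)\|=\min\{\|w_B\|: w\in W,\ w_N=x\}.
\]
Fix $x'\in\pi_N(W')$ and set $x:=D_Nx'$. The lifts $w$ of $x$ in $W$ are exactly the vectors $Dw'$ with $w'\in W'$ and $w'_N=x'$, and for these $w_B=D_Bw'_B$. Using $\|D_Bw'_B\|\le\|y_B\|_\infty\|w'_B\|$ and $\|D_Bw'_B\|\ge\|w'_B\|/\|y_B^{-1}\|_\infty$, and minimizing over $w'$, we get
\[
\frac{\|\ell'(x')\|}{\|y_B^{-1}\|_\infty}\le\|\ell(x)\|\le\|y_B\|_\infty\|\ell'(x')\|.
\]
In the same way, $\|x'\|/\|y_N^{-1}\|_\infty\le\|x\|\le\|y_N\|_\infty\|x'\|$. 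Combining these, for every $x'\neq\zero$,
\[
\frac{1}{\|y_B^{-1}\|_\infty\|y_N\|_\infty}\frac{\|\ell'(x')\|}{\|x'\|}\le\frac{\|\ell(x)\|}{\|x\|}\le\|y_B\|_\infty\|y_N^{-1}\|_\infty\frac{\|\ell'(x')\|}{\|x'\|}.
\]

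\textbf{Transfer to singular values.} Subspaces $S'\subseteq\pi_N(W')$ correspond bijectively, with the same dimension, to subspaces $S=D_NS'\subseteq\pi_N(W)$. Taking the max over $S\setminus\{\zero\}$ and then the min over subspaces of dimension at least $\dim-i+1$ in \eqref{sv_minmax} preserves the two-sided inequality above. This yields $\sigma_i$ sandwiched between $\sigma'_i/(\|y_B^{-1}\|_\infty\|y_N\|_\infty)$ and $\|y_B\|_\infty\|y_N^{-1}\|_\infty\sigma'_i$ for each $i$, including zero singular values, which are handled by the same min-max formula.

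\textbf{Where care is needed.} The argument is essentially routine. The only step requiring attention is checking that the lifting norm equals the minimal $\|w_B\|$, and that the correspondence of lifts under $D$ is exact, so that the minima can be compared directly.
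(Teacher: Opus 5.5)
Your proof is correct and complete. Transporting lifts through the coordinatewise bijection $\mathrm{diag}(y)\colon W'\to W$ gives the two-sided bound on $\|\ell(x)\|/\|x\|$ versus $\|\ell'(x')\|/\|x'\|$ for $x=\mathrm{diag}(y_N)x'$. Since $\mathrm{diag}(y_N)$ maps subspaces of $\pi_N(W')$ onto subspaces of $\pi_N(W)$ of the same dimension, the min-max formula \eqref{sv_minmax} transfers this bound index by index, zero singular values included.

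The paper does not prove this lemma itself; it imports it from \cite[Lemma~7.12]{ADL25}. So there is no in-paper argument to compare against, and your proof stands as a self-contained justification.
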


\section{The Circuit Diameter Bound}\label{sec:bound}
In this section, we present an `existential' circuit augmentation algorithm (Algorithm~\ref{alg:non-det}).
Given an initial feasible solution $x^{(0)}$, it solves \eqref{lp} in $O(n^2\log n\cdot\sum_{i\in[n]}\slc_{1/2}(x_i^\mcp,g(x^{(0)})))$ circuit steps, assuming that it has access to the max central path and its polarized decomposition.
This latter assumption is what makes the algorithm `existential', which we will later remove in \Cref{sec:implementation}.
The goal of this section is to prove a weaker bound of $O(n^3\log n\cdot\sum_{i\in[n]}\slc_{1/2}(x_i^\mcp,g(x^{(0)})))$.
We defer the stronger amortized bound to \Cref{sec:amortization}.

Throughout this section, we will use the following six functions:
\begin{align*}
    &f_1(n):=32n^2\cdot (3n)^{5n}\cdot f_6(n)&&f_2(n):=64n^{2.5}\\ 
    &f_3(n,p):=(3n)^{5p}\cdot f_6(n)&&f_4(n):=2n \\ 
    &f_5(n):= 2n^2  &&f_6(n):= 3785n^6. 
\end{align*}

\begin{algorithm}[!h]\label{alg:non-det}
\caption{Existential circuit augmentation algorithm}
\SetKwInOut{Input}{Input}
\SetKwInOut{Output}{Output}
\Input{Bounded instance of \eqref{lp} with constraint matrix $A\in\R^{m\times n}$, feasible polyhedron $P\subseteq\R^n$, feasible solution $x^{(0)}\in P$ with gap $g_r$, and polarized decomposition $[g_0,g_1]\cup[g_1,g_2]\cup\dots\cup [g_{r-1},g_r]$ of $x^\mcp$}
\Output{Optimal solution $x^*$}
$x\gets x^{(0)}$\;
\While{$x$ is not optimal}{\label{pc:main-while}
    Find $j\ge0$ s.t. $g(x)\in (g_j,g_{j+1}]$ with associated polarized partition $(B,N)$\;
    Compute the lifting operator $\ell_N^{W_j}$ where $W_j := {\rm diag}(\mx(g_{j+1}))^{-1}\mathrm{ker(A)}$\; 
    Compute the singular values $\sigma_1\geq \sigma_2 \geq \dots\geq \sigma_{\dim(\pi_N(W_j))}$ of $\ell_N^{W_j}$\;
    $k\gets C_\sigma(\ell_N^{W_j},[g_{j+1}/g(x),\infty))$\;
    \lIf{$k=0$}{$\hat g\gets g_j$}\lElse{$\hat g\gets\max\{g_j,g_{j+1}/\sigma_k\}$}
    \If{$k<\dim(\pi_N(W_j))$ and $f_1(n)\cdot \hat g<g(x)\leq g_{j+1}/(f_2(n)\cdot \sigma_{k+1})$}{
        $\bar x\gets \tilde x^{\mcp}(g(x))$\;
        $y^{(0)} \gets \tilde x^{\mcp}(n\hat g)$\;
        $x\gets \textsc{Long-Steps}(x, \bar x, y^{(0)},N, \hat g)$\;
    }
    
    Compute elementary vector $z\in\elementary{A}$ as solution to \textsc{Ratio-Circuit}$(A,c,\zero,1/x)$\;
    $x\gets\aug_P(x,z)$\;
}
\Return $x$\;
\end{algorithm}

The high-level description of Algorithm~\ref{alg:non-det} is quite simple.
In every iteration, we first identify the polarized interval $[g_j,g_{j+1}]$ containing the current optimality gap $g(x)$, along with its polarized partition $(B,N)$.
Then, we compute the lifting operator $\ell^{W_j}_N$ and its singular values $\sigma_1 \geq \sigma_2 \geq \dots \geq \sigma_{\dim(\pi_N(W_j))}$, where $W_j = {\rm diag}(\mx(g_{j+1}))^{-1}\ker(A)$.

Let $a:=C_\sigma(\ell_N^{W_j},[g_{j+1}/ g_j,\infty))+1$ and $b:=C_\sigma(\ell_N^{W_j},[1,\infty))$. 
Note that if $a \leq \dim(\pi_N(W))$, then $g_{j+1}/\sigma_a$ is the first singular value breakpoint above $g_j$.
Similarly, if $b \geq 1$, then $g_{j+1}/\sigma_b$ is the last singular value breakpoint below $g_{j+1}$. 
If $a\leq b$, we further subdivide the interval $[g_j,g_{j+1}]$ into subintervals based on the singular values, i.e., 
$$\left[g_j, \frac{g_{j+1}}{\sigma_a}\right], \left[\frac{g_{j+1}}{\sigma_a}, \frac{g_{j+1}}{\sigma_{a+1}}\right], \dots, \left[\frac{g_{j+1}}{\sigma_b}, g_{j+1}\right].$$
Let $k:=C_\sigma(\ell_N^{W_j},[g_{j+1}/g(x),\infty))$.
If $k = 0$, then $g(x)$ lies in the subinterval with left endpoint $g_j$.
Otherwise, $g(x)$ lies in the subinterval with left endpoint $g_{j+1}/\sigma_k$.
So, we set $\hat g$ as the corresponding left endpoint.

Our goal is to cross $\hat g$ quickly via circuit steps.
If the current gap $g(x)$ is multiplicatively close to $\hat g$, i.e., $g(x)\leq f_1(n)\cdot \hat g$, then this can be achieved by repeatedly applying Wallacher's rule according to \Cref{iterated-geometric-progress}.
It turns out that this is always the case when $k=\dim(\pi_N(W_j))$.
On the other hand, if $g(x)$ is multiplicatively far away from $\hat g$, then we need a different circuit step.
In this scenario, the algorithm first runs Wallacher's rule to move away from the right endpoint $g_{j+1}/\sigma_{k+1}$.
Once $g(x)\leq g_{j+1}/(f_2(n)\cdot \sigma_{k+1})$, then it uses the \textsc{Long-Steps} subroutine to approach $\hat g$.

\begin{algorithm}[!h]\label{alg:long-steps}
\caption{\textsc{Long-Steps}}
\SetKwInOut{Input}{Input}
\SetKwInOut{Output}{Output}
\Input{Feasible solution $x\in P$, points $\bar x, y^{(0)}\in P$ satisfying Properties \eqref{cond:x_gap}, \eqref{cond:x_lbound}, \eqref{cond:y_gap}, \eqref{cond:y_lbound}, subset of coordinates $N\subseteq [n]$, target gap $\hat g$}
\Output{Feasible solution $x\in P$}
Let $L$ be the ray containing $[y^{(0)}, \bar x]$, where $L(g)$ is the point on $L$ with optimality gap $g\geq 0$\;
$p\gets0$\;
\While{$f_1(n)\cdot \hat g<g(x)$}{\label{pc:inner-while}
    $S_p \gets \{i\in N: x_i \leq 3ny^{(p)}_i\}$\;
    $y^{(p+1)} \gets L(g)$ for $g\geq 0$ minimal such that $L_N(g)\geq 4ny^{(p)}_N$\;
    Compute elementary vector $z\in\elementary{A}$ as a solution to~\eqref{sys:long_steps_lp}\;
    $x\gets\aug_P(x,z)$\;
    $p\gets p+1$\;
}

\Return $x$\;
\end{algorithm}

The \textsc{Long-Steps} subroutine relies on two key points $\bar x,y^{(0)}\in P$, which satisfy the following properties:

\medskip
\begin{minipage}{0.49\textwidth}
\begin{enumerate}
    \item\label{cond:x_gap} $g(\bar x)\leq f_5(n)\cdot g(x^{(t)})$; 
    \item\label{cond:x_lbound} $\bar x_i \geq \frac{\mx_i(g(x^{(t)}))}{f_5(n)}$ for all $i\in N$.
\end{enumerate}
\end{minipage}
\begin{minipage}{0.49\textwidth}
\begin{enumerate}[label=(\alph*)]
    \item\label{cond:y_gap} $g(y^{(0)})\leq f_6(n)\cdot \hat g$;
    \item\label{cond:y_lbound} $y^{(0)}_i\geq \frac{1}{\sigma_k}$ for all $i\in N$.
\end{enumerate}
\end{minipage}
\medskip

Observe that the choice of $\bar x := \tilde x^\mcp(g(x^{(t)}))$ and $y^{(0)} := \tilde x^\mcp(n\hat g)$ in Algorithm~\ref{alg:non-det} actually satisfies these properties for smaller values of $f_5(n)$ and $f_6(n)$, i.e., $f_5(n) = f_6(n) = n$.
The reason we set them larger now is to allow for approximation errors when we implement the algorithm in \Cref{sec:implementation}.
As we will not have access to the max central path $\mx$ or its polarized decomposition, we could only approximate these quantities. 

Let $L:\R_{\geq 0}\to \R^n$ be the ray containing the line segment $[y^{(0)}, \bar x]$, where $L(g)$ is the point on $L$ whose optimality gap is equal to $g$.
As long as $g(x)$ is multiplicatively far from $\hat g$, \textsc{Long-Steps} repeats the following procedure for $p\in \Z_{\geq 0}$, starting with $p=0$.
First, we identify the set of \emph{small} variables in $N$ as
\begin{equation}\label{eq:small_coords}
    S_p:=\left\{i\in N:x_i\le 3ny^{(p)}_i\right\}. 
\end{equation}
We also set $y^{(p+1)} \in \R^n$ as 
\begin{equation}\label{eq:moving_target}
    y^{(p+1)} := L(g)
\end{equation}
where $g\geq 0$ is the smallest value such that $L_N(g) \geq 4ny^{(p)}_N$.
Then, we define the following linear program, which we call the \emph{Long Step Circuit LP~\eqref{sys:long_steps_lp}}. 
\begin{equation}
    \label{sys:long_steps_lp}
    \tag{LSC}
    \begin{aligned}
        &\min\; \pr{c}{z^+}-\pr{c}{z^-}\\
        &\mathrm{s.t.} \quad Az^+ - Az^- =\zero\, , \\
        &\qquad\; \sum_{i\in S_p}\frac{z_i^+}{2y^{(p+1)}_i} + f_4(n)\sum_{i\in B}\frac{z_i^-}{x_i}+\sum_{i\in N\setminus S_p}\frac{z_i^-}{x_i}\leq 1\, , \\  
        &\qquad\; z^+,z^-\geq \zero\, ,\ z_i^-=0\quad\forall i\in S_p\, .
    \end{aligned}
\end{equation}
Note that it can be solved using the \textsc{Ratio-Circuit} oracle.
After obtaining an elementary vector $z\in \mathcal{E}(A)$ as a solution to \eqref{sys:long_steps_lp}, we augment our current point $x$ with it.
This finishes the description of an iteration of \textsc{Long-Steps}.
As we will see in the next section, there are at most $n$ such iterations.

\subsection{Traversing a Long Singular Value Subinterval}\label{sec:long}

In this subsection, we show that Algorithm~\ref{alg:non-det} traverses each singular value subinterval in $O(n^2\log n)$ calls to \textsc{Ratio-Circuit}.
We first set up some notation for the analysis.
For $t\geq 0$, let $x^{(t)}$ be the iterate of Algorithm~\ref{alg:non-det} after $t$ calls to \textsc{Ratio-Circuit}, including the ones made in \textsc{Long-Steps}.
Fix $t\geq 0$, and let $[g_j,g_{j+1}]$ be the polarized interval containing $g(x^{(t)})$, with its polarized partition $(B,N) = [n]$.
For convenience, we may assume that $x^\mcp_i(g_{j+1}) = 1$ for all $i\in [n]$. 
This can be achieved by multiplying $c_i$ and the $i$th column of $A$ by $1/x^\mcp_i(g_{j+1})$.
As $x^\mcp_i(g)$ scales by $1/x^\mcp_i(g_{j+1})$ for all $g\geq 0$, its straight line complexity remains the same.
Note that if $x^\mcp_i(g_{j+1})=0$, then $\mx_i(g) = 0$ for all $g\geq 0$.
In this case, $P = P\cap \{x\in \R^n:x_i = 0\}$ so we can eliminate the variable $x_i$. 
We may also assume that $g_{j+1} = 1$ by multiplying $c$ by $1/g_{j+1}$.
Denoting $\tilde g := g_j$, we have $g(x^{(t)})\in [\tilde g, 1]$.

Consider the lifting operator $\ell_N^W$ where $W:=\ker(A)$, along with its singular values $\sigma_1\geq \sigma_2 \geq \dots \geq \sigma_{\mathrm{dim}(\pi_N(W))}$.
Let $k:= C_\sigma(\ell^W_N, [1/g(x^{(t)}),\infty))$ and assume that $k<\dim(\pi_N(W))$; we will deal with the case $k=\dim(\pi_N(W))$ in \Cref{sec:iterations}.
Then, $g(x^{(t)})$ lies in the singular value subinterval $[\hat g, 1/\sigma_{k+1}]$, where $\hat g = \max\{\tilde g, 1/\sigma_k\}$.
We call the subinterval \emph{long} if $f_1(n)f_2(n)\cdot \hat g< 1/\sigma_{k+1}$, and \emph{short} otherwise.
If the subinterval is short, then according to \Cref{iterated-geometric-progress}, we can traverse it in $O(n^2\log n)$ calls to \textsc{Ratio-Circuit} with Wallacher's rule.
So, we may assume that the subinterval is long.
We may also assume that $g(x^{(t)})$ lies deep in the subinterval, i.e.,
\begin{equation}\label{eq:deep}
    f_1(n)\cdot \hat g<g(x^{(t)})\leq \frac{1}{f_2(n)\cdot \sigma_{k+1}}.
\end{equation}
Noting that $[\hat g, f_1(n) \cdot \hat g]$ and $[1/(f_2(n) \sigma_{k+1}),1/\sigma_{k+1}]$ are \emph{short}, it suffices to show that this deep part of the subinterval $[\hat g, 1/\sigma_{k+1}]$ can be traversed quickly.

By \Cref{lem:polarization} with $\eta = 1/2$, we have
\begin{align}
    \frac{1}{4n}&\le\tilde x_i^\mcp(g)\le 1 &\forall i\in B,\ g\in [\tilde g,1]&\label{approx-B}\\
    \frac{g}{n}&\le\tilde x_i^\mcp(g)\le 4g &\forall i\in N,\ g\in [\tilde g,1]&.\label{approx-N}
\end{align}
The following lemma upper bounds the optimality gap of a feasible point in terms of its 1-norm. 

\begin{restatable}{lemma}{gaponN}\label{gap-on-N}
    For any feasible solution $x$ to \eqref{lp}, its optimality gap is
    $g(x)\le 8\tilde g\|x_B\|_1 + 2\|x_N\|_1$.
\end{restatable}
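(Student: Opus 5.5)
The plan is to express the primal optimality gap through a dual optimal slack and then bound that slack coordinatewise using the dual max central path at gap $\tilde g$. We work in the normalization of this subsection: $\mx(1)=\one$ and $g_{j+1}=1$, and $[\tilde g,1]$ is a $(1/4)$-polarized interval with partition $(B,N)$, by \Cref{lem:polarization} with $\eta=1/2$.

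\textbf{Step 1: the gap as an inner product.} Let $(y^*,s^*)$ be an optimal solution of the dual \eqref{dlp}, so that $\pr{b}{y^*}=v^*$. For any feasible $x$ we have $Ax=b$ and $A^\top y^*+s^*=c$, hence
\[
g(x)=\pr{c}{x}-v^*=\pr{c}{x}-\pr{A x}{y^*}=\pr{x}{s^*}=\sum_{i\in[n]}x_i s^*_i .
\]

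\textbf{Step 2: bounding $s^*$ by the dual max central path.} The dual sublevel set at gap $\tilde g$ contains $s^*$, since $s^*$ has gap $0\le\tilde g$. By the definition of $\ms$, this gives $s^*_i\le\ms_i(\tilde g)$ for every $i\in[n]$. By \Cref{thm:centrality}, $\ms_i(\tilde g)\le 2\tilde g/\mx_i(\tilde g)$. This requires $\mx_i(\tilde g)>0$, which holds in the cases below, and when $\tilde g=0$ the inequality $s^*_i\le\ms_i(0)$ can be used directly.

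\textbf{Step 3: lower bounds on $\mx(\tilde g)$ from polarization.} For $i\in B$, the polarized inequality \eqref{eq:polarized-gamma} with $\gamma=1/4$ gives $\mx_i(\tilde g)\ge\frac14\mx_i(1)=\frac14$, so $s^*_i\le 8\tilde g$. For $i\in N$, \Cref{lem:mcp_bounds} gives $\mx_i(\tilde g)\ge\tilde g\,\mx_i(1)=\tilde g$, so $s^*_i\le 2$.

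\textbf{Step 4: combine.} Since $x\ge\zero$,
\[
g(x)=\sum_{i\in B}x_i s^*_i+\sum_{i\in N}x_i s^*_i\le 8\tilde g\|x_B\|_1+2\|x_N\|_1 .
\]

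The only delicate point is the degenerate case $\tilde g=0$. There $\mx_i(0)$ may vanish for $i\in N$, so \Cref{thm:centrality} cannot be inverted. Instead we pass to the limit: the argument gives $s^*_i\le 2$ for every $\tilde g>0$ in the interval, so it holds for the fixed $s^*$. Alternatively, for $\tilde g=0$, apply Steps 2 and 3 at an arbitrary gap $g'\in(0,1]$ and let $g'\to0$. In that version the $B$-coefficient becomes $8g'\to0$, and the $N$-coefficient stays $2$. Either way the claimed bound follows.
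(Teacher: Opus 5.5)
Your proof is correct and follows the paper's argument exactly: you write $g(x)=\pr{x}{s^*}$, bound $s^*_i\le \ms_i(\tilde g)\le 2\tilde g/\mx_i(\tilde g)$ via \Cref{thm:centrality}, and then use the lower bounds $\mx_i(\tilde g)\ge 1/4$ on $B$ (from polarization) and $\mx_i(\tilde g)\ge \tilde g$ on $N$ (from \Cref{lem:mcp_bounds}). Your limiting argument for $\tilde g=0$, taking $g'\downarrow 0$ inside $[\tilde g,1]$, is a useful addition. The paper's chain of inequalities silently divides by $\mx_i(\tilde g)$, which may vanish for $i\in N$ in that case.
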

\begin{proof}
    Let $s^*$ be an optimal solution to the dual LP~\eqref{dlp}.
    The optimality gap of $x$ is given by
    \begin{equation*}
        g(x)=\pr{x}{s^*}\le\sum_{i\in [n]}x_is_i^\mcp(\tilde g)\le2\sum_{i\in[n]}\frac{x_i\tilde g}{x_i^\mcp(\tilde g)}\le 8\sum_{i\in B}x_i\tilde g+2\sum_{i\in N}x_i= 8\tilde g \|x_B\|_1 + 2\|x_N\|_1.
    \end{equation*}
    The second inequality follows from the upper bound in \Cref{thm:centrality}, while the third inequality is due to our assumption that $[\tilde g, 1]$ is $\frac14$-polarized and $\mx(1) = \one$.
\end{proof}

Since $g(x^{(t)})\gg \tilde g$ and $x^{(t)}\leq \mx(1) = \one$, \Cref{gap-on-N} shows that the optimality gap of $x^{(t)}$ is essentially determined by $\|x^{(t)}_N\|_1$.
This is suggestive of decreasing the coordinates in $N$ in order to approach $\tilde g$.
Recall the definition of small coordinates in \eqref{eq:small_coords}.
Once a coordinate becomes small, we want it to remain small.
More formally, for $r\geq t$ and $p\in \Z_{\geq 0}$, let us denote
\[S^{(r)}_p := \left\{i\in N:x_i^{(r)}\le 3ny^{(p)}_i\right\}.\]
Then, we want the set $S^{(r)}_p$ to be monotone nondecreasing with $r$ and $p$.
This sentiment is reflected in the constraint of \eqref{sys:long_steps_lp}, where any increase in the small coordinates is penalized.

The next proposition shows that by taking circuit steps given by basic optimal solutions to \eqref{sys:long_steps_lp}, the set of small coordinates grows or we exit the deep part of the subinterval. 
Thus, \textsc{Long-Steps} terminates in at most $|N|\leq n$ iterations.

\begin{proposition}\label{scaling-up}
    Let $x^{(t)}$ be an iterate of the circuit walk satisfying Condition~\eqref{eq:deep}.
    Let $\bar x$ and $y^{(0)}$ be feasible solutions to \eqref{lp} with Properties~\ref{cond:x_gap}, \ref{cond:x_lbound} and \ref{cond:y_gap}, \ref{cond:y_lbound} respectively.
    For $r\geq t$, if $x^{(r)}$ satisfies Condition~\eqref{eq:deep} and $S^{(r)}_p\neq N$ for some $p\in\Z_{\geq 0}$, then any basic optimal solution $z^*\in\mathcal{E}(A)$ to~\eqref{sys:long_steps_lp} with corresponding next iterate $x^{(r+1)}:=\mathrm{aug}_P(x^{(r)},z^*)$ satisfies 
    \begin{itemize}
        \item $g(x^{(r+1)})<g(x^{(r)})$ and $S^{(r)}_p\subsetneq S^{(r+1)}_{p+1}$ \emph{or}
        \item $g(x^{(r+1)})\le f_1(n)\cdot\hat g$.
    \end{itemize}
\end{proposition}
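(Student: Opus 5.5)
We plan to argue by exhibiting a good feasible solution to \eqref{sys:long_steps_lp} and then reading off structural consequences of optimality for the basic solution $z^*$. Throughout we work in the normalization $\mx(1)=\one$, $g_{j+1}=1$. Write $x:=x^{(r)}$, $S:=S^{(r)}_p$, $y:=y^{(p)}$ and $y':=y^{(p+1)}$.

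\textbf{Step 1: a good candidate direction.} The natural candidate is $h:=y'-x\in W$. Since $y'$ lies on the ray $L$ through $y^{(0)}$ and $\bar x$, we have $g(y')\ll g(x)$. Indeed, by \ref{cond:y_gap} and the definition of $y'$ as the first point with $L_N\geq 4n y_N$, combined with \ref{cond:x_lbound}, \eqref{approx-N} and \ref{cond:y_lbound}, one gets $g(y')\le \poly(n)^{p+1}f_6(n)\hat g$. This is much smaller than $g(x)$ by \eqref{eq:deep}, using that $f_1$ dominates $f_3(n,p)$ for $p\le n$. Hence $\pr{c}{h}\le -g(x)/2$.

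However, $h$ may violate $h_S\ge \zero$, and its weighted norm may be large. We therefore modify it by projecting onto the singular subspace. Let $V_d$ be the singular subspace for $\ell^W_N$ of dimension $d=\dim(\pi_N(W))-k$, with lifted subspace $W_d$. On $W_d$, $\|w_B\|\le \sigma_{k+1}\|w_N\|$, which by \eqref{eq:deep} is at most $\|w_N\|/(f_2(n)g(x))$. Meanwhile, \Cref{projections} (applied with index $k$) bounds $\|h_N-h^p_N\|\le\|h_B\|/\sigma_k\le \hat g\|h_B\|$.

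Since $\|h_B\|\le \sqrt n$ (all coordinates are bounded by $1$), the error on $N$ is $O(\sqrt n\,\hat g)$. This is tiny compared to $y_N$, whose entries are at least $1/\sigma_k\ge$ a multiple of $\hat g$ by \ref{cond:y_lbound}, and to $g(x)$. So we take $\tilde h:=h^p$, possibly plus a small correction: we add a small multiple of $y'$ to restore $\tilde h_S\ge\zero$, which is possible since $y'_S\ge 4n y_S\ge \tfrac43 x_S$.

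\textbf{Step 2: verifying the candidate.} We check that $\tilde h$ is feasible after scaling, with a good ratio. On $S$, $\tilde h_i\le y'_i$, so $\sum_{S}\tilde h^+_i/(2y'_i)\le n/2$. On $N\setminus S$, $\tilde h^-_i\le x_i$ up to small error, so that term is at most about $n$. On $B$, $\|\tilde h_B\|\lesssim \sqrt n\,\|h_N\|/(f_2 g(x))$ while $x_B\gtrsim$ const; this is where $f_2=64n^{2.5}$ and $f_4=2n$ are tuned. For this we need a lower bound on $x_B$, which we would obtain from \Cref{gap-on-N}-type arguments or simply by noting that $x_B$ need not be large. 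In the latter case we instead bound $\tilde h^-_B$ relative to $x_B$ by using $h_B=y'_B-x_B\ge -x_B$, and control the perturbation via the $\sigma_{k+1}$ bound. Also, $\pr{c}{\tilde h}\le -g(x)/4$, since the projection changes the cost only by $O(\|x_B\|\tilde g+\hat g)$ by \Cref{gap-on-N}. Hence the optimum $z^*$ has ratio $\pr{c}{z^*}\le -g(x)/\poly(n)$ under the \eqref{sys:long_steps_lp} normalization. In particular $\pr{c}{z^*}<0$, which gives $g(x^{(r+1)})<g(x^{(r)})$.

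\textbf{Step 3: consequences of the augmentation.} Let $\alpha$ be the step length. Two properties of $z^*$ drive the conclusion. First, the constraint with coefficient $f_4$ on $B$ and coefficient $1$ on $N\setminus S$ implies that a step of length at most about $1$ cannot zero out a $B$-coordinate before some $N\setminus S$ coordinate, unless the gap has already dropped: if $\alpha\ge 1$ (say), the cost decrease $\alpha|\pr{c}{z^*}|$ forces $g(x^{(r+1)})\le f_1\hat g$. Otherwise the blocking coordinate lies in $N\setminus S$ and becomes $0\le 3ny'_i$, so it joins $S^{(r+1)}_{p+1}$. Second, old small coordinates stay small: for $i\in S$, $x^{(r+1)}_i\le x_i+\alpha z^{*+}_i\le 3ny_i+2\alpha y'_i\le 3ny'_i$ using $y'\ge 4ny$. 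Coordinates of $N$ that were not in $S$ only decrease. Together these give $S\subsetneq S^{(r+1)}_{p+1}$.

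The main obstacle is Step 1, specifically controlling the $B$ part of the projected direction so that $z^*$ cannot be blocked on $B$. This is exactly where the singular value window \eqref{eq:deep} and the weight $f_4$ must be balanced, and I expect the constants to need careful bookkeeping there.
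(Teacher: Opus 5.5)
Your architecture matches the paper's: a target obtained by projecting onto the singular subspace $V_d$ and lifting, feasibility of a rescaled candidate in \eqref{sys:long_steps_lp}, then a dichotomy on the step length. There is, however, a genuine gap exactly where you flag it.

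Your candidate $\tilde h=h^p$ has $\tilde h_B=\ell^W_N(\Pi_{V_d}(h_N))$. This is only \emph{additively} small: $\|\tilde h_B\|\le\sigma_{k+1}\|h_N\|\le 1/(8n^2)$ using \eqref{eq:deep}. It may also be negative at some $i\in B$ with $x_i$ tiny or even $0$. Then the term $f_4(n)\tilde h_i^-/x_i$ is unbounded (infinite if $x_i=0$), so no fixed rescaling of $\tilde h$ is feasible.

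The inequality $h_B\ge -x_B$ does not transfer, because $\tilde h_B\neq h_B$ and the $B$-part of $h-h^p$ need not be small. \Cref{gap-on-N} also gives no lower bound on $x_B$.

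The missing idea is to build the target from the shifted point $\hat x=(1-1/f_4(n))x+(1/f_4(n))\tilde x^\mcp(g(x))$, i.e.\ to add the kernel direction $(\tilde x^\mcp(g(x))-x)/f_4(n)$. By polarization, $\tilde x^\mcp_i(g(x))\ge 1/(4n)$ on $B$, so this adds at least $1/(4nf_4(n))=1/(8n^2)$ to every $B$-coordinate. That exactly absorbs the lifting error and gives the \emph{multiplicative} bound $y^\circ_B\ge(1-1/f_4(n))x_B$, i.e.\ $(z^\circ_i)^-\le x_i/f_4(n)$ on $B$. The $N$-side error stays at most $\|(y'-\hat x)_B\|/\sigma_k\le\sqrt n/\sigma_k$.

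Your correction ``add a small multiple of $y'$'' is not a direction in $\ker(A)$, and it is unnecessary. For $i\in S$, $y'_i-\sqrt n/\sigma_k\ge(4n-\sqrt n)y_i\ge 3ny_i\ge x_i$, using $y_i\ge 1/\sigma_k$ directly. Routing this comparison through $\hat g$ goes the wrong way, since $1/\sigma_k\le\hat g$.

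Step 3 also fails as written. A ratio bound $\pr{c}{z^*}\le -g(x)/\poly(n)$ with $\alpha\ge 1$ only gives a $(1-1/\poly(n))$-factor decrease, not $g(x^{(r+1)})\le f_1(n)\hat g$. Likewise $\pr{c}{\tilde h}\le -g(x)/4$ is too weak.

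You must compare with the target itself. With the shifted target, each of the $n$ summands of the normalization constraint evaluated at $z^\circ=y^\circ-x$ is at most $1$, so $z^\circ/n$ is feasible. If $\alpha\ge n$, then $g(x^{(r+1)})\le g(x)+\pr{c}{z^\circ}=g(y^\circ)$. Then \Cref{gap-on-N}, together with $y^\circ_B\le 2$, $y^\circ_N\le 2y'_N$ and \Cref{lem:gap}, gives $g(y^\circ)\le f_1(n)\hat g$.

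If $\alpha<n$, then $\alpha/f_4(n)<1/2$ keeps $B$ positive, and $3ny_i+2ny'_i\le 3ny'_i$ keeps $S$ small. So the blocking coordinate lies in $N\setminus S$.

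Your threshold ``about $1$'' does not match the normalization of your own candidate, which you estimate at about $3n/2$ plus the $B$-contribution. This leaves intermediate step lengths uncovered.

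Finally, coordinates of $N\setminus S$ need not only decrease, since $z^+$ is unconstrained there. This claim is not needed for the conclusion, though.
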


Fix $r\geq t$ and $p\in \Z_{\geq 0}$.
In order to prove \Cref{scaling-up}, we carefully construct a target point $y^\circ$ such that $z^\circ:= y^\circ - x^{(r)}$ is a good solution to \eqref{sys:long_steps_lp}.
First, we define a shifted iterate 
\begin{equation}\label{eq:conv_comb}
    \hat x:=\left(1-\frac{1}{f_4(n)}\right)x^{(r)}+\frac{1}{f_4(n)}\cdot \tilde x^\mcp(g(x^{(r)})),
\end{equation}
i.e., we slightly push $x^{(r)}$ in direction of the max central path. 
Then, we construct the target as 
\begin{equation}\label{eq:target}
    y^\circ:=\hat x+(\ell_N^W(\Pi_{V_d}(y^{(p+1)}_N-\hat x_N)),\Pi_{V_d}(y^{(p+1)}_N-\hat x_N)),
\end{equation}
where $V_d$ is a singular subspace for $\ell_N^W$ of dimension $d:=\dim(\pi_N(W))-k$, and $\Pi_{V_d}$ is the orthogonal projection from $\pi_N(W)$ onto $V_d$.
We now explain the intuition behind this construction. 
As indicated in \Cref{scaling-up}, the goal is to zero out a coordinate in $N\setminus S_p^{(r)}$, while keeping the coordinates in $B$ and $S_p^{(r)}$ positive, and the coordinates in $S_p^{(r)}$ small.
If we simply used $y^{(p+1)}$ as the target, the entries of $x^{(r)}_{B\cup S_p^{(r)}}$ that are larger than $y^{(p+1)}_{B\cup S_p^{(r)}}$ may be set to zero first. 
To prevent the entries of $x^{(r)}_B$ from hitting zero, one could pick the target $y^\circ$ such that $y^\circ_B\approx x^{(r)}_B$.
This can be achieved by first projecting $y^{(p+1)}_N - x^{(r)}_N$ onto the singular subspace $V_d$, and then lifting it to the coordinates in $B$, i.e., setting $y^\circ$ to 
\[x^{(r)} + (\ell^W_N(\Pi_{V_d}(y^{(p+1)}_N - x^{(r)}_N)), \Pi_{V_d}(y^{(p+1)}_N - x^{(r)}_N)).\]
However, this only yields additive closeness, which does not work for the entries of $x^{(r)}_B$ that are tiny.
To fix this, we first push these coordinates up a little bit using the convex combination \eqref{eq:conv_comb}.

The next consideration is to prevent the entries of $x^{(r)}_{S_p^{(r)}}$ from hitting zero.
We achieve this by picking the target such that $y^\circ_{S_p^{(r)}}\geq x^{(r)}_{S_p^{(r)}}$.
As the error on $N$ incurred by the projection $\Pi_{V_d}$ is proportional to $1/\sigma_{k+1}$ (\Cref{proximity-on-N}), this is guaranteed by the definition of $y^{(p+1)}$ \eqref{eq:moving_target} and Property \ref{cond:y_lbound} of $y^{(0)}$.
For the same reason, $y^\circ_{S_p^{(r)}}$ is not  much larger than $y^{(p+1)}_{S_p^{(r)}}$.
Thus, the entries of $x^{(r)}_{S_p^{(r)}}$ do not increase too quickly, which ensures that the set of small coordinates $S_p^{(r)}$ is monotone with respect to inclusion.

Recall that we defined $L:\R_{\ge0}\to\R^n$ as the ray containing the line segment $[y^{(0)},\bar x]$ 
We first lower bound the gradient of $L_N$ with respect to the optimality gap.
\begin{lemma}\label{lem:gradient}
For every $i\in N$, we have 
\[\frac{dL_i}{dg} := \frac{\bar{x}_i-y^{(0)}_i}{g(\bar{x})-g(y^{(0)})}\geq \frac{1}{2f_5(n)^2}.\]
\end{lemma}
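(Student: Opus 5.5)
We bound the numerator from below and the denominator from above, using the normalization $\mx(1)=\one$, $g_{j+1}=1$, Condition~\eqref{eq:deep} at iterate $x^{(t)}$, and Properties~\ref{cond:x_gap}, \ref{cond:x_lbound}, \ref{cond:y_gap}, \ref{cond:y_lbound}.

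\emph{Denominator.} By Property~\ref{cond:x_gap}, $g(\bar x)\le f_5(n)\, g(x^{(t)})$. Since optimality gaps are nonnegative, $g(\bar x)-g(y^{(0)})\le f_5(n)\, g(x^{(t)})$. Positivity of the denominator follows from the numerator bound below: the segment must move in the direction of increasing gap, since $\bar x_N>y^{(0)}_N$ while both points lie in $P$.

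\emph{Numerator.} Fix $i\in N$. By Property~\ref{cond:x_lbound} and the polarization lower bound for $N$, \Cref{lem:mcp_bounds} with $g_{j+1}=1$, we get $\bar x_i\ge \mx_i(g(x^{(t)}))/f_5(n)\ge g(x^{(t)})/f_5(n)$. For $y^{(0)}_i$ we use monotonicity of $\mx$ together with the $\frac14$-polarization upper bound on $N$ over $[\tilde g,1]$. Since $y^{(0)}\in P$ has gap at most $G:=f_6(n)\hat g$, we have $y^{(0)}_i\le \mx_i(G)\le 4G$. This requires $G\in[\tilde g,1]$: we have $G\ge \hat g\ge \tilde g$, and $G\le 1$ follows from \eqref{eq:deep} because $f_6\le f_1$. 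Then \eqref{eq:deep} gives $g(x^{(t)})>f_1(n)\hat g=32n^2(3n)^{5n} f_6(n)\hat g$, hence $4G\le g(x^{(t)})/(8n^2(3n)^{5n})$. Since $f_5(n)=2n^2$, this is at most $g(x^{(t)})/(2f_5(n))$. Therefore
\[\bar x_i-y^{(0)}_i\ge \frac{g(x^{(t)})}{f_5(n)}-\frac{g(x^{(t)})}{2f_5(n)}=\frac{g(x^{(t)})}{2f_5(n)}.\]

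\emph{Combining.} Dividing gives $\frac{dL_i}{dg}\ge \frac{g(x^{(t)})/(2f_5(n))}{f_5(n)g(x^{(t)})}=\frac{1}{2f_5(n)^2}$. The only delicate step is controlling $y^{(0)}_N$ from above via polarization. This needs the gap bound of Property~\ref{cond:y_gap} and the requirement that $f_1$ dominate $f_5 f_6$ with room to spare; Property~\ref{cond:y_lbound} is not needed here.
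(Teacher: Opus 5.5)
Your proposal is correct and follows the paper's proof essentially step for step: you bound the denominator by $f_5(n)\,g(x^{(t)})$ via Property~\ref{cond:x_gap}, bound $\bar x_i$ below by $g(x^{(t)})/f_5(n)$ via Property~\ref{cond:x_lbound} and polarization, bound $y^{(0)}_i$ above by $\mx_i(f_6(n)\hat g)\le 4f_6(n)\hat g$ via Property~\ref{cond:y_gap}, and close with \eqref{eq:deep}. One small correction: positivity of the denominator does not follow merely from $\bar x_N>y^{(0)}_N$ with both points in $P$; it follows from your intermediate bound $\bar x_i>4f_6(n)\hat g\ge\mx_i(f_6(n)\hat g)$, which forces $g(\bar x)>f_6(n)\hat g\ge g(y^{(0)})$ (the paper leaves this point implicit).
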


\begin{proof}
Fix $i\in N$.
By Properties~\ref{cond:x_gap}, \ref{cond:x_lbound} and \ref{cond:y_gap},
\[\frac{dL_i}{dg} = \frac{\bar{x}_i-y^{(0)}_i}{g(\bar{x})-g(y^{(0)})} \geq \frac{\mx_i(g(x^{(t)}))/f_5(n) - \mx_i(f_6(n)\cdot \hat g)}{f_5(n)\cdot g(x^{(t)})}.\]
Recall that $[\tilde g, 1]$ is $\frac14$-polarized and $\mx(1) = \one$.  
Since $f_6(n)\cdot \hat g\in [\tilde g, 1]$, the above is lower bounded by
\[ \frac{g(x^{(t)})/f_5(n) - 4 f_6(n)\cdot\hat g}{f_5(n)\cdot g(x^{(t)})} \geq \frac{1}{2f_5(n)^2}\]
as desired.
Indeed, the last inequality holds by  \eqref{eq:deep} because
\[4 f_6(n)\cdot \hat g \leq \frac{f_1(n)}{2f_5(n)} \cdot \hat g < \frac{g(x^{(t)})}{2f_5(n)}. \qedhere\]
\end{proof}

The following lemma shows the feasibility of $y^{(p)}$ to \eqref{lp} and bounds its optimality gap.

\begin{lemma}\label{lem:gap}
For every $p\in \{0,1,\dots,n\}$, we have
\[g(y^{(0)})\leq g(y^{(p)}) \leq f_3(n,p) \cdot \hat g.\]
Consequently, $y^{(p)}$ is feasible to \eqref{lp}.
\end{lemma}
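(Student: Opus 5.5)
We argue by induction on $p$, using only that $L$ is an affine ray parametrized by the optimality gap and the gradient bound of \Cref{lem:gradient}. First we note that $y^{(p+1)}=L(g)$ is chosen with $g$ minimal such that $L_N(g)\ge 4n y^{(p)}_N$. Since every coordinate $L_i$, $i\in N$, is affine in $g$ with slope at least $1/(2f_5(n)^2)>0$, the set of admissible $g$ is a ray $[g^\star,\infty)$. Moreover, $y^{(p)}_N$ itself lies on $L$ with positive coordinates and $4n\ge 1$, so $g^\star\ge g(y^{(p)})$. This gives monotonicity $g(y^{(p)})\le g(y^{(p+1)})$, and chaining it yields the lower bound $g(y^{(0)})\le g(y^{(p)})$.

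For the upper bound we bound the increment. Write $y^{(p)}=L(g_p)$. For each $i\in N$ we need $L_i(g)\ge 4n L_i(g_p)$, i.e.\ $L_i(g)-L_i(g_p)\ge (4n-1)L_i(g_p)$. Since $L_i(g)-L_i(g_p)\ge (g-g_p)/(2f_5(n)^2)$, it suffices that
\[
g-g_p\ \ge\ 2f_5(n)^2(4n-1)\max_{i\in N}y^{(p)}_i .
\]
We then need an upper bound on $y^{(p)}_i$ for $i\in N$ in terms of $g_p$. By the definition of $\mx$ we have $y^{(p)}_i\le \mx_i(g_p)$, provided $y^{(p)}$ is feasible. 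Since $[\tilde g,1]$ is $\frac14$-polarized, $\mx_i(g_p)\le 4g_p$ when $g_p\in[\tilde g,1]$. When $g_p<\tilde g$ we instead use $\mx_i(g_p)\le \mx_i(\tilde g)\le 4\tilde g\le 4\hat g$. When $g_p>1$ we use concavity, $\mx_i(g_p)\le g_p\,\mx_i(1)\le g_p$, which is valid in all cases. Hence $y^{(p)}_i\le 4\max\{g_p,\hat g\}$, and therefore
\[
g_{p+1}\ \le\ g_p + 8f_5(n)^2(4n)\max\{g_p,\hat g\}\ \le\ (3n)^5\,\max\{g_p,\hat g\},
\]
using $f_5(n)=2n^2$, so that $32n\cdot 4n^4=128n^5\le 243n^5$. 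Starting from $g_0\le f_6(n)\hat g$ (Property~\ref{cond:y_gap}, with $f_6(n)\ge1$), induction gives $g_p\le (3n)^{5p}f_6(n)\hat g=f_3(n,p)\hat g$.

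Feasibility requires care, since the bound $y^{(p)}_i\le\mx_i(g_p)$ presupposes $y^{(p)}\in P$. We therefore prove feasibility inside the same induction. The point $y^{(p+1)}$ lies on the line through $y^{(0)},\bar x\in P$ with $g(y^{(p+1)})\ge g(y^{(0)})$. If $g(y^{(p+1)})\le g(\bar x)$, it is a convex combination of the two, hence in $P$. Otherwise it lies beyond $\bar x$, and we must verify nonnegativity. This extrapolation step is the main obstacle. For $i\in N$, nonnegativity is immediate: the slope is positive and $L_i(g(y^{(0)}))\ge0$. For $i\in B$, we compare $g_{p+1}\le f_3(n,n)\hat g\le f_1(n)\hat g/(32n^2)<g(x^{(t)})/(32n^2)$ (by \eqref{eq:deep}, noting $f_1(n)=32n^2(3n)^{5n}f_6(n)$) with $g(\bar x)$. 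We expect $g(\bar x)\gtrsim g(x^{(t)})/\mathrm{poly}(n)$ from Properties~\ref{cond:x_lbound} and \ref{cond:y_gap} and \Cref{gap-on-N}: the $N$-mass of $\bar x$ is at least $g(x^{(t)})/(nf_5(n))$, while the $B$-contribution is only $O(n\tilde g)$. This would show that the relevant $y^{(p)}$ never pass beyond $\bar x$, so the extrapolated case does not arise. The slack factor $32n^2$ in $f_1$ suggests exactly this comparison, and it is where we would focus the careful constant-chasing.
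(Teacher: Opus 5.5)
Your monotonicity argument and the increment bound are essentially the paper's proof. \Cref{lem:gradient} turns the required growth $(4n-1)y^{(p)}_i$ into a gap increment of at most $32n^5 y^{(p)}_i$, and $y^{(p)}_i\le \mx_i(g(y^{(p)}))\le 4\max\{g(y^{(p)}),\hat g\}$ closes the induction with the factor $(3n)^5$. Your case analysis for $g_p\notin[\tilde g,1]$ is a harmless extra.

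The gap is the feasibility claim. As you note yourself, the bound $y^{(p)}_i\le\mx_i(g_p)$ needs $y^{(p)}\in P$. So without feasibility even the inequality part of your induction is not closed, and you leave this step at ``we expect''. Moreover, the tool you propose cannot deliver it. \Cref{gap-on-N} is an \emph{upper} bound on the gap in terms of $\|x_B\|_1$ and $\|x_N\|_1$, so a large $N$-mass of $\bar x$ gives no lower bound on $g(\bar x)$ through that lemma. Property~\ref{cond:y_gap} plays no role here either.

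The missing step is to lower-bound $g(\bar x)$ through the definition of $\mx$ applied to the feasible point $\bar x$, exactly as you did for $y^{(p)}$.
\begin{itemize}
\item[(1)] For $i\in N$, $\bar x_i\le \mx_i(g(\bar x))\le 4\max\{g(\bar x),\tilde g\}$.
\item[(2)] Property~\ref{cond:x_lbound} together with $\mx_i(g)\ge g$ on $[\tilde g,1]$ gives $\bar x_i\ge \mx_i(g(x^{(t)}))/f_5(n)\ge g(x^{(t)})/f_5(n)$.
\item[(3)] By \eqref{eq:deep}, $4\tilde g\le 4\hat g<g(x^{(t)})/f_5(n)$. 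So the maximum in (1) must be $g(\bar x)$, giving $g(\bar x)\ge g(x^{(t)})/(4f_5(n))=g(x^{(t)})/(8n^2)$. The paper phrases this as a contradiction with Property~\ref{cond:x_lbound}.
\item[(4)] Hence $f_3(n,n)\hat g=f_1(n)\hat g/(32n^2)<g(x^{(t)})/(32n^2)\le g(\bar x)$.
\end{itemize}
So every $y^{(p)}$ with $p\le n$ has gap in $[g(y^{(0)}),g(\bar x)]$ and lies on the segment $[y^{(0)},\bar x]\subseteq P$. The extrapolation case you worry about never occurs, and no nonnegativity check on $B$ is needed. Carry feasibility of $y^{(p)}$ as part of your inductive hypothesis and insert this step; then the proof is complete.
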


\begin{proof}
We prove the first statement by induction on $p$.
The base case $p=0$ is clear by construction.
Let us assume that the lemma holds for some $p\geq 0$, and consider the case $p+1$.
By \Cref{lem:gradient}, $y^{(p+1)}$ exists and $g(y^{(p+1)}) \geq g(y^{(p)}) \geq g(y^{(0)})$.
Moreover, there exists an $i\in N$ such that $y^{(p+1)}_i = 4ny^{(p)}_i$.
Applying \Cref{lem:gradient} yields
\begin{align*}
g(y^{(p+1)}) &\leq 8n^4\left( y^{(p+1)}_i - y^{(p)}_i \right) + g(y^{(p)}) \\
&\leq 32n^5 y^{(p)}_i + g(y^{(p)}) \\
&\leq 32n^5\mx_i(f_3(n,p)\cdot\hat g) + f_3(n,p)\cdot\hat g \tag{by inductive hypothesis} \\
&\leq 4\cdot 32n^5 \cdot f_3(n,p)\cdot\hat g + f_3(n,p)\cdot\hat g \tag{$[\tilde g, 1]$ is $\frac14$-polarized and $\mx(1) = \one$} \\
&\leq f_3(n,p+1)\cdot\hat g.
\end{align*}

To prove the second statement, it suffices to show that $f_3(n,n)\cdot \hat g\leq g(\bar x)$.
This is because $L(g)$ is feasible to \eqref{lp} for all $g\in [g(y^{(0)}), g(\bar x)]$.
First, we claim that $g(\bar x)/g(x^{(t)})\geq 1/(4f_5(n))$.
Suppose otherwise for a contradiction.
Then, for every $i\in N$,
\[\bar x_i \leq \mx_i(g(\bar x)) < 4 \cdot \frac{1}{4f_5(n)} \cdot \mx_i(g(x^{(t)})) = \frac{\mx_i(g(x^{(t)}))}{f_5(n)},\]
where the strict inequality is due to $[g(x^{(t)})/(4f_5(n)),g(x^{(t)})]\subseteq [\tilde g, 1]$ being $\frac14$-polarized.
However, this contradicts Property~\ref{cond:x_lbound}.
Thus, we obtain
\[f_3(n,n)\cdot \hat g \leq \frac{f_1(n)\cdot \hat g}{4f_5(n)} < \frac{g(x^{(t)})}{4f_5(n)} \leq g(\bar x)\]
as desired, where the strict inequality is by \eqref{eq:deep}.
\end{proof}

The following lemmas state the crucial properties of the target point $y^\circ$.

\begin{lemma}\label{proximity-on-N}
    We have that $\|y^{(p+1)}_N-y_N^\circ\|\le\sqrt n/\sigma_k$.  
\end{lemma}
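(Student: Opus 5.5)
The plan is to write the difference $y^{(p+1)}_N-y^\circ_N$ as the residual of an orthogonal projection and then apply \Cref{projections}. Set $z:=y^{(p+1)}-\hat x$. By \eqref{eq:target}, $y^\circ_N=\hat x_N+\Pi_{V_d}(z_N)$, so
\[
y^{(p+1)}_N-y^\circ_N = z_N-\Pi_{V_d}(z_N).
\]
Since $\Pi_{V_d}(z_N)\in V_d\subseteq \pi_N(W)$, the lift used in \eqref{eq:target} is well defined.

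First I would check that $z\in W=\ker(A)$. The point $y^{(p+1)}$ is feasible by \Cref{lem:gap}, and $p+1\le n$ because \textsc{Long-Steps} runs at most $n$ iterations. The point $\hat x$ is a convex combination of the feasible points $x^{(r)}$ and $\tilde x^\mcp(g(x^{(r)}))$. Hence $Ay^{(p+1)}=A\hat x=b$, so $z\in W$.

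Next I would identify the vector $z^p$ of \Cref{projections} with this projection. We have $W_d=L^W_N(V_d)$, and $\pi_N(L^W_N(v))=v$. So as $w$ ranges over $W_d$, the vector $w_N$ ranges over all of $V_d$. Minimizing $\|z_N-w_N\|$ over $w\in W_d$ is therefore an orthogonal projection onto $V_d$, which gives $z^p_N=\Pi_{V_d}(z_N)$. If $k\ge 1$, \Cref{projections} applies with this $k$ and $d=\dim(\pi_N(W))-k$, giving
\[
\|y^{(p+1)}_N-y^\circ_N\|\le \frac{\|z_B\|}{\sigma_k}.
\]
If $k=0$, then $d=\dim(\pi_N(W))$, so $V_d=\pi_N(W)$ and the residual is $0$. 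This matches the bound $\sqrt n/\sigma_0=0$.

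It then remains to show $\|z_B\|\le\sqrt n$, and I would do this by proving $0\le y^{(p+1)}_i,\hat x_i\le 1$ for every $i\in B$. Any feasible $x$ with $g(x)\le 1$ satisfies $x\le \mx(g(x))\le\mx(1)=\one$ by monotonicity (\Cref{lem:mcp_bounds}) and our normalization.
\begin{itemize}
\item For $y^{(p+1)}$: \Cref{lem:gap} gives $g(y^{(p+1)})\le f_3(n,n)\hat g$. This is at most $f_1(n)\hat g<g(x^{(t)})\le 1$ by \eqref{eq:deep}.
\item For $\hat x$: $g(\hat x)\le g(x^{(r)})\le g(x^{(t)})\le 1$. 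This uses that the gap is nonincreasing along the walk, since every step is an augmentation with $\pr{c}{z}\le 0$. Alternatively, both points in the convex combination \eqref{eq:conv_comb} have gap $g(x^{(r)})$.
\end{itemize}
So each coordinate satisfies $|z_i|\le 1$ for $i\in B$, hence $\|z_B\|\le\sqrt{|B|}\le\sqrt n$, which proves the lemma.

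The only delicate point is this last bound, namely making sure every relevant gap is at most $1$, so that the normalization $\mx(1)=\one$ applies. The rest is a direct application of \Cref{projections}.
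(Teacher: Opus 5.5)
Your proposal is correct and takes essentially the same route as the paper. Both set $z=y^{(p+1)}-\hat x$, handle $k=0$ trivially, apply \Cref{projections} otherwise, and get $\|z_B\|\le\sqrt n$ because both points have gap at most $1$ and so lie in $[0,1]$ on $B$ under the normalization $\mx(1)=\one$. Your checks that $z\in W$ and that $z^p_N=\Pi_{V_d}(z_N)$, and your use of plain monotonicity for the upper bound on $B$, just make explicit what the paper leaves implicit.
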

\begin{proof}
    If $k=0$, then $V_d=\pi_N(W)$, and $\Pi_{V_d}$ is the identity map.
    Therefore, $y_i^\circ=y^{(p+1)}_i$ for all $i\in N$.
    Otherwise, denoting $z:=y^{(p+1)}-\hat x$, we apply \Cref{projections} to obtain 
    \begin{equation*}
        \|y^{(p+1)}_N-y_N^\circ\|=\left\|z_N-\Pi_{V_d}(z_N)\right\|\le \frac{\|z_B\|}{\sigma_k}.
    \end{equation*}
    As long as $x^{(r)}$ satisfies Condition~\eqref{eq:deep}, we have $g(y^{(p+1)}) \leq g(x^{(r)}) = g(\hat x)$ by \Cref{lem:gap}.
    Since $[\tilde g, 1]$ is $\frac14$-polarized and $\mx(1) = \one$, both  $y^{(p+1)}_i$ and $\hat x_i$ are bounded from above by $1$ if $i\in B$. 
    Hence, $|z_i|\le1$ for all $i\in B$ and so $\|z_B\|\leq \sqrt{n}$.
\end{proof}

\begin{restatable}{lemma}{increaseonsmallN}\label{increase-on-small-N}
    Let $i\in N$. Then, $y^\circ_i\leq 2 y^{(p+1)}_i$.
    If $i\in S^{(r)}_p$, then $y^\circ_i\geq x^{(r)}_i$.
\end{restatable}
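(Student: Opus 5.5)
Both claims will follow from the entrywise consequence of \Cref{proximity-on-N}: for every $i\in N$,
\[|y^\circ_i - y^{(p+1)}_i| \le \|y^{(p+1)}_N-y^\circ_N\| \le \frac{\sqrt n}{\sigma_k}.\]
The plan is to compare the additive error $\sqrt n/\sigma_k$ with lower bounds on $y^{(p+1)}_i$ coming from Property~\ref{cond:y_lbound}. If $k=0$ then $y^\circ_N=y^{(p+1)}_N$ exactly, and only the second claim needs an argument (given below). So assume $k\ge 1$.

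First I will establish that $y^{(p+1)}_i \ge (4n)^{p+1} y^{(0)}_i \ge 4n/\sigma_k$ for all $i\in N$. By definition \eqref{eq:moving_target}, $y^{(p+1)}_N\ge 4n\,y^{(p)}_N$, and inducting on $p$ gives $y^{(p+1)}_N\ge (4n)^{p+1}y^{(0)}_N$. Combined with Property~\ref{cond:y_lbound}, $y^{(0)}_i\ge 1/\sigma_k$, this yields the claimed bound. The upper bound then follows:
\[y^\circ_i \le y^{(p+1)}_i+\frac{\sqrt n}{\sigma_k}\le y^{(p+1)}_i + \frac{\sqrt n}{4n}\, y^{(p+1)}_i \le 2y^{(p+1)}_i.\]

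For the second claim, take $i\in S^{(r)}_p$, so $x^{(r)}_i\le 3n\,y^{(p)}_i$. Using $y^{(p+1)}_i\ge 4n\,y^{(p)}_i$, we get
\[y^\circ_i \ge y^{(p+1)}_i - \frac{\sqrt n}{\sigma_k} \ge 4n\,y^{(p)}_i - \frac{\sqrt n}{\sigma_k}.\]
Since $y^{(p)}_i\ge y^{(0)}_i\ge 1/\sigma_k$ (which holds for $p=0$ as well), we have $\sqrt n/\sigma_k\le \sqrt n\, y^{(p)}_i\le n\, y^{(p)}_i$. Hence $y^\circ_i\ge 3n\,y^{(p)}_i\ge x^{(r)}_i$. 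In the case $k=0$, $y^\circ_i=y^{(p+1)}_i\ge 4n\,y^{(p)}_i\ge x^{(r)}_i$ directly.

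The main obstacle is checking that \Cref{proximity-on-N} applies at the current $r$ and $p$. Its proof used that $x^{(r)}$ satisfies \eqref{eq:deep} and that $y^{(p+1)}$ is well defined and feasible, which comes from \Cref{lem:gap} with $p+1\le n$. Both hold within \textsc{Long-Steps}, since \Cref{scaling-up} limits it to at most $n$ iterations. The constants involved ($4n$ versus $\sqrt n$ and $3n$) leave ample slack.
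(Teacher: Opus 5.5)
Your proposal is correct and matches the paper's proof: both use \Cref{proximity-on-N} to bound $|y^\circ_i-y^{(p+1)}_i|$ by $\sqrt n/\sigma_k$, then absorb this additive error using $y^{(p+1)}_N\ge 4n\,y^{(p)}_N$ and Property~\ref{cond:y_lbound}. Your explicit induction giving $y^{(p)}_N\ge y^{(0)}_N\ge 1/\sigma_k$, and your separate handling of $k=0$, only spell out steps the paper leaves implicit.
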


\begin{proof}
    Fix an $i\in N$.
    By \Cref{proximity-on-N}, we have that $|y^{(p+1)}_i-y_i^\circ|\le \|y^{(p+1)}_N-y_N^\circ\|\le \sqrt{n}/\sigma_k$. So,
    \[y^\circ_i \leq y^{(p+1)}_i + \left|y^{(p+1)}_i - y^\circ_i\right| \leq y^{(p+1)}_i + \frac{\sqrt{n}}{\sigma_k} \leq y^{(p+1)}_i + \frac{\sqrt{n}y^{(p+1)}_i}{4n} \leq 2y^{(p+1)}_i\]
    where the third inequality follows from \eqref{eq:moving_target} and Property~\ref{cond:y_lbound}.

    Next, suppose that $i\in S^{(r)}_p$. Then,
    \[y^\circ_i \geq y^{(p+1)}_i - \left|y^{(p+1)}_i - y^\circ_i\right| \geq y_i^{(p+1)} - \frac{\sqrt{n}}{\sigma_k} \geq 4n y_i^{(p)} - \sqrt{n}y^{(p)}_i \geq 3ny^{(p)}_i \geq x^{(r)}_i\]
    where the third inequality again follows from \eqref{eq:moving_target} and Property~\ref{cond:y_lbound}.
\end{proof}

\begin{restatable}{lemma}{consistencyinB}\label{consistency-in-B}
    Let $i\in B$. 
    Then, $(1-1/f_4(n))x_i^{(r)}\leq y^\circ_i\leq 2$.
\end{restatable}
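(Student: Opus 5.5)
We need to show that for $i\in B$, $(1-1/f_4(n))x^{(r)}_i \le y^\circ_i \le 2$. Write $u:=\Pi_{V_d}(y^{(p+1)}_N-\hat x_N)\in V_d$, so that $y^\circ_B=\hat x_B+\ell^W_N(u)$. The plan is to show that the lifted correction $\ell^W_N(u)$ is tiny in every coordinate of $B$, namely at most $1/(4n\,f_4(n))$ in absolute value. The two bounds then follow from the properties of $\hat x_B$.

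\emph{Step 1 (bounding the correction).} Since $V_d$ is a singular subspace for $\ell^W_N$ of dimension $d=\dim(\pi_N(W))-k$, we have $\sigma_1(\ell^W_N|_{V_d})\le\sigma_{k+1}$. Hence
\[
\|\ell^W_N(u)\|\le \sigma_{k+1}\|u\|\le \sigma_{k+1}\|y^{(p+1)}_N-\hat x_N\|,
\]
because $\Pi_{V_d}$ is an orthogonal projection. Now bound $\|y^{(p+1)}_N-\hat x_N\|\le\sqrt n\,\|y^{(p+1)}_N-\hat x_N\|_\infty$, and control each coordinate as follows.
\begin{itemize}
\item By \Cref{lem:gap} and \eqref{eq:deep}, $g(y^{(p+1)})\le g(x^{(r)})$, so for $i\in N$ we get $y^{(p+1)}_i\le \mx_i(g(x^{(r)}))\le 4g(x^{(r)})$ by $\frac14$-polarization.
\item Likewise $\hat x_i\le\mx_i(g(\hat x))\le 4g(x^{(r)})$, since $g(\hat x)=g(x^{(r)})$.
\end{itemize}
Thus $\|y^{(p+1)}_N-\hat x_N\|\le 4\sqrt n\, g(x^{(r)})$. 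Using the right-hand side of \eqref{eq:deep}, $g(x^{(r)})\le 1/(f_2(n)\sigma_{k+1})$, we obtain
\[
\|\ell^W_N(u)\|\le \frac{4\sqrt n}{f_2(n)}=\frac{1}{16n^2}.
\]
The inequality $g(x^{(r)})\le 1/(f_2(n)\sigma_{k+1})$ holds because $x^{(r)}$ satisfies \eqref{eq:deep} and $g$ is nonincreasing along the walk. The bound $1/(16n^2)$ is at most $1/(4n\cdot 2n)=1/(4n f_4(n))$, as desired.

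\emph{Step 2 (lower bound on $\hat x_B$).} For $i\in B$, $\hat x_i\ge (1-1/f_4(n))x^{(r)}_i+\tilde x^\mcp_i(g(x^{(r)}))/f_4(n)$. By \eqref{approx-B}, $\tilde x^\mcp_i(g(x^{(r)}))\ge 1/(4n)$, which applies since $g(x^{(r)})\in[\tilde g,1]$. Therefore
\[
y^\circ_i\ge \hat x_i-|\ell^W_N(u)_i|\ge (1-1/f_4(n))x^{(r)}_i+\frac{1}{4nf_4(n)}-\frac{1}{4nf_4(n)},
\]
which gives the lower bound.

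\emph{Step 3 (upper bound).} We have $\hat x_i\le\mx_i(g(x^{(r)}))\le\mx_i(1)=1$ by \Cref{lem:mcp_bounds}. Hence $y^\circ_i\le 1+1/(16n^2)\le 2$.

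The main obstacle is Step 1: it must use exactly the singular-subspace property, with $\sigma_{k+1}$ rather than $\sigma_k$, together with the upper end of the deep condition, so that the absolute error beats the additive cushion $1/(4nf_4(n))$ supplied by the convex combination \eqref{eq:conv_comb}. The constant check $4\sqrt n/f_2(n)\le 1/(8n^2)$ is what fixes $f_2(n)=64n^{2.5}$.
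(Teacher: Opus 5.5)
Your proof is correct and is essentially the paper's argument. Both use the singular-subspace bound $\|\ell^W_N(u)\|\le\sigma_{k+1}\|y^{(p+1)}_N-\hat x_N\|$, an $O(\sqrt n\,g(x^{(r)}))$ bound on $\|y^{(p+1)}_N-\hat x_N\|$ from \Cref{lem:gap} and $\frac14$-polarization, and the right end of \eqref{eq:deep}, and then absorb the resulting error into the cushion $\tilde x^{\mcp}_i(g(x^{(r)}))/f_4(n)\ge 1/(8n^2)$ supplied by \eqref{eq:conv_comb}.

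The differences are cosmetic. Your coordinatewise bound $|y^{(p+1)}_i-\hat x_i|\le 4g(x^{(r)})$ holds because both points are feasible, hence nonnegative; it avoids the paper's triangle inequality and gains a factor $2$, so $f_2(n)=32n^{2.5}$ would already suffice for your version, contrary to your closing remark. Also, you only need $g(\hat x)\le g(x^{(r)})$, which holds because $\tilde x^{\mcp}(g(x^{(r)}))\in P_{g(x^{(r)})}$, not the equality you state.
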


\begin{proof}
    We begin by estimating the size of $\|\ell_N^W(\Pi_{V_d}(y^{(p+1)}_N-\hat x_N))\|$.
    As in the previous lemma, we denote $z:=y^{(p+1)}-\hat x$.
    Since $V_d$ is a singular subspace, it holds by \eqref{sv_minmax} that 
    \begin{align*}
        \|\ell_N^W(\Pi_{V_d}(z_N))\|&\le\sigma_{k+1}\cdot\|\Pi_{V_d}(z_N)\| \\
        &\le\sigma_{k+1}\cdot\|z_N\| \\
        &\le\sigma_{k+1}\cdot\left(\|y^{(p+1)}_N\|+\|\hat x_N\|\right) \\
        &\le\sigma_{k+1}\cdot\left(\|\mx_N(f_3(n,p+1)\cdot \hat g)\| + \|\mx_N(g(x^{(r)}))\|\right) \tag{by \Cref{lem:gap}}\\
        &\le\sigma_{k+1}\cdot2\|\mx_N(g(x^{(r)}))\| \tag{$f_3(n,p+1)\leq f_1(n)$ for all $p<n$} \\
        &\le\sigma_{k+1}\cdot8\sqrt{n}g(x^{(r)}) \tag{$[\tilde g, 1]$ is $\frac14$-polarized and $\mx(1) = \one$}\\
        &\le\frac{8\sqrt n}{f_2(n)}=\frac{1}{8n^2} \tag{by \eqref{eq:deep}}.
    \end{align*}

    Now, fix an $i\in B$.
    For the upper bound, we get
    \[y^\circ_i = \hat x_i + \ell_N^W(\Pi_{V_d}(z_N))_i \leq 1 + \frac{1}{8n^2} \leq 2.\]
    For the lower bound, we obtain
    \begin{align*}
        \frac{y^\circ_i}{x^{(r)}_i}&=\frac{\frac{1}{f_4(n)}\tilde x^\mcp_i(g(x^{(r)}))+(1-\frac{1}{f_4(n)})x^{(r)}_i+\ell_N^W(\Pi_{V_d}(z_N))_i}{x^{(r)}_i}\\
        &\ge1-\frac{1}{f_4(n)}+\frac{\frac{1}{4nf_4(n)}-\frac{1}{8n^2}}{x^{(r)}_i}\tag{by~\eqref{approx-B}}\\
        &=1-\frac{1}{f_4(n)}.
    \end{align*}
\end{proof}

\begin{proof}[Proof of \Cref{scaling-up}]
    Fix $r\geq t$ and $p\in \{0,1,\dots,n-1\}$ such that $S^{(r)}_p\neq N$.
    We first show that the LP~\eqref{sys:long_steps_lp} has a good solution if $x^{(r)}$ satisfies Condition~\eqref{eq:deep}.
    Let $z^\circ:=y^\circ-x^{(r)}$ be the direction to the target point.
    We claim that $z^\circ/n$ is a feasible solution to~\eqref{sys:long_steps_lp}.
    Clearly, $Az^\circ=0$. 
    By \Cref{increase-on-small-N}, we have 
    \[\frac{z_i^\circ/n}{2y^{(p+1)}_i}\le \frac{y^\circ_i/n}{2y^{(p+1)}_i} \leq \frac1n\] 
    for all $i\in N$, and $(z_i^{\circ})^-=0$ for all $i\in S_p^{(r)}$.
    By \Cref{consistency-in-B}, we have 
    \[\frac{(z_i^{\circ})^-/n}{x^{(r)}_i}\le \frac{(x^{(r)}_i-y^\circ_i)/n}{x^{(r)}_i} \leq \frac{1}{nf_4(n)}\]
    for all $i\in B$.
    Finally, we have 
    \[\frac{(z_i^{\circ})^-/n}{x_i^{(r)}}\le \frac{x^{(r)}_i/n}{x_i^{(r)}}= \frac1n\] 
    for all $i\in N\setminus S^{(r)}_p$, so $z^\circ/n$ is indeed feasible.

    Let $z^*$ be a basic optimal solution to~\eqref{sys:long_steps_lp}, which is an elementary vector. 
    Let $x^{(r+1)}:=\aug_P(x^{(r)},z^*)=x^{(r)}+\alpha\cdot z^*$, with $\alpha>0$ maximal.
    If $\alpha\ge n$, then we use the fact that $z^\circ/n$ is feasible, $c^\top z^\circ/n<0$ and $z^*$ is optimal to conclude that $g(x^{(r+1)})\le g(y^\circ)$.
    Applying \Cref{gap-on-N} yields 
    \begin{align*}
        g(y^\circ) &\leq 8\hat g \|y^\circ_B\|_1 + 2\|y^\circ_N\|_1 \\ 
        &\leq 16 n \hat g + 4 \|y^{(p+1)}_N\|_1 \tag{by \Cref{consistency-in-B} and \Cref{increase-on-small-N}}\\
        &\leq 16n\hat g + 4 \|\mx_N(f_3(n,p+1)\cdot \hat g)\|_1 \tag{by \Cref{lem:gap}} \\
        &\leq 16n\hat g + 16n \cdot f_3(n,p+1)\cdot \hat g \tag{$[\tilde g, 1]$ is $\frac14$-polarized and $\mx(1) = \one$} \\
        &\leq f_1(n)\hat g.
    \end{align*}

    On the other hand, if $\alpha<n$, then for $i\in S_p^{(r)}$, we have 
    $x^{(r+1)}_i\leq 3ny^{(p)}_i + 2ny^{(p+1)}_i \leq 3ny^{(p+1)}_i$
    from the constraints in \eqref{sys:long_steps_lp}.
    So, $S_{p+1}^{(r+1)}\supseteq S_p^{(r)}$.
    Similarly, it holds for $i\in B$ that $x^{(r+1)}_i\ge x_i^{(r)}-n x_i^{(r)}/f_4(n)= x_i^{(r)}/2$.
    Thus, the augmentation step must be stopped by a variable $i^*\in N\setminus S_p^{(r)}$ hitting zero.
    Clearly, $i^*\in S_{p+1}^{(r+1)}\setminus S_p^{(r)}$, and $g(x^{(r+1)})<g(x^{(r)})$.
\end{proof}

\subsection{Bounding the Number of Augmentation Steps}
\label{sec:iterations}
In order to bound the number of augmentation steps carried out by Algorithm~\ref{alg:non-det}, it remains to cross the singular value breakpoint $1/\sigma_{\dim(\pi_N(W))}$ in $\mathrm{poly}(n)$ iterations.
We achieve this by showing that the smallest singular value of $\ell^W_N$ can never be too big.
\begin{restatable}{lemma}{smallestsingularvalue}\label{smallest_singular_value}
    Let $\tilde g\le1/8n$.
    Then, $\sigma_{\dim(\pi_N(W))}(\ell_N^W)\le 2n^{3/2}$.
\end{restatable}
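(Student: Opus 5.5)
The plan is to use the variational characterization of the smallest singular value and exhibit a single explicit test vector. We work in the normalized setting of \Cref{sec:long}: $\mx(1)=\one$, $g_{j+1}=1$, and $[\tilde g,1]$ is $\frac14$-polarized with partition $(B,N)$.

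\textbf{Step 1: reduce to one test vector.} By \eqref{sv_minmax} with $i=\dim(\pi_N(W))$, we have
\[
\sigma_{\dim(\pi_N(W))}(\ell_N^W)=\min_{u\in\pi_N(W)\setminus\{\zero\}}\frac{\|\ell_N^W(u)\|}{\|u\|}.
\]
So it suffices to find one $w\in W$ with $w_N\neq\zero$ and $\|\ell_N^W(w_N)\|\le 2n^{3/2}\|w_N\|$. By \Cref{def:lift}, $L_N^W(w_N)$ minimizes $\|w'\|$ over $w'\in W$ with $w'_N=w_N$. Since all these $w'$ share the same $N$-part, this is the same as minimizing $\|w'_B\|$. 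Hence $\|\ell_N^W(w_N)\|\le\|w_B\|$ for every $w\in W$, and it is enough to bound $\|w_B\|/\|w_N\|$.

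\textbf{Step 2: choose the test vector.} Take
\[
w:=\tilde x^\mcp(1)-\tilde x^\mcp(\tilde g).
\]
Both points lie in $P$, so $w\in\ker(A)=W$.

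\begin{itemize}
\item On $B$: by \eqref{approx-B}, both points have coordinates in $[0,1]$. So $|w_i|\le 1$ for $i\in B$, giving $\|w_B\|\le\sqrt n$.
\item On $N$: by \eqref{approx-N}, $\tilde x^\mcp_i(1)\ge 1/n$ and $\tilde x^\mcp_i(\tilde g)\le 4\tilde g\le 1/(2n)$, using the hypothesis $\tilde g\le 1/(8n)$. Hence $w_i\ge 1/(2n)$ for every $i\in N$, so $\|w_N\|\ge 1/(2n)$.
\end{itemize}

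In particular $w_N\neq\zero$. Here we assume $N\neq\emptyset$; otherwise $\pi_N(W)$ is trivial and the statement is vacuous.

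\textbf{Step 3: conclude.} Combining the two steps,
\[
\sigma_{\min}(\ell_N^W)\le\frac{\|\ell_N^W(w_N)\|}{\|w_N\|}\le\frac{\|w_B\|}{\|w_N\|}\le\frac{\sqrt n}{1/(2n)}=2n^{3/2}.
\]

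The only step needing care is the lower bound on $w_N$ in Step 2. It requires the polarization bounds \eqref{approx-N} at both endpoints $g=1$ and $g=\tilde g$, and it is exactly where the assumption $\tilde g\le 1/(8n)$ is used: it keeps $\tilde x^\mcp_N(\tilde g)$ at most half of $\tilde x^\mcp_N(1)$ coordinatewise. Everything else follows directly from the definitions.
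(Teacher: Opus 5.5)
Your proof is correct and follows the paper's argument: the paper uses the same test vector (up to sign), $\tilde x^\mcp(\tilde g)-\tilde x^\mcp(1)\in W$. It also uses the same bound $\|\ell_N^W(z_N)\|\le\|z_B\|\le\sqrt{|B|}$ and the same coordinatewise estimate $1/n-4\tilde g\ge 1/(2n)$ on $N$. The only cosmetic difference is that the paper keeps $\|z_N\|\ge\sqrt{|N|}/(2n)$, which gives the slightly sharper intermediate bound $2n\sqrt{|B|/|N|}$ before relaxing it to $2n^{3/2}$.
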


\begin{proof}
    We can assume without loss of generality that $N$ is non-empty (otherwise, there is no positive singular value and the claim holds immediately).
    By definition of the smallest singular value, we have $\sigma_{\dim(\pi_N(W))}(\ell_N^W) = \min_{x\in\pi_N(W)\setminus\{\zero\}}\|\ell_N^W(x)\|/\|x\|$.
    Choose $z:=\tilde x^\mcp(\tilde g)-\tilde x^\mcp(1)$.
    By construction, $z\in W$, and $z_N\in\pi_N(W)$.
    By \eqref{approx-N} we have that 
    \begin{equation*}
        \|z_N\|=\|\tilde x^\mcp(1)_N-\tilde x^\mcp(\tilde g)_N\|\ge\|(1/n-4\tilde g)\cdot\mathbf{1}_N\|\ge\|1/(2n)\cdot\mathbf{1}_N\|=\sqrt{|N|}/2n.
    \end{equation*}
    On the other hand, $\|\ell_N^W(z_N)\|\le\|z_B\|\le\sqrt{|B|}$.
    Therefore, $\sigma_{\dim(\pi_N(W))}\le2n\sqrt{|B|/|N|}\le 2n^{3/2}$.
\end{proof}

We now have all the ingredients to prove the main result of this section.
The bound we obtain here is a factor of $O(n)$ weaker than \Cref{main}; the stronger bound will follow from \Cref{thm:amortized} in \Cref{sec:amortization}.

\begin{theorem}\label{thm:main_weak}
Algorithm~\ref{alg:non-det} terminates in 
\[O\left(n^3\log n \sum_{i\in[n]}\slc_{1/2}(x_i^\mcp,g(x^{(0)}))\right)\]
\textsc{Ratio-Circuit} augmentation steps.
\end{theorem}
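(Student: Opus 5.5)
The plan is to charge \textsc{Ratio-Circuit} calls to polarized intervals and, within each, to singular value subintervals. By \Cref{lem:polarization} with $\eta=1/2$, the range $[0,g(x^{(0)})]$ splits into $r\le 2\sum_{i}\slc_{1/2}(\mx_i,g(x^{(0)}))$ intervals $[g_j,g_{j+1}]$, each $\frac14$-polarized. The gap is strictly decreasing along the walk, since Wallacher steps decrease it by \Cref{ratio-geometric} and \textsc{Long-Steps} steps decrease it by \Cref{scaling-up}. Hence each polarized interval is visited in one contiguous block of iterations, and it suffices to show that the iterates with gap in $(g_j,g_{j+1}]$ use $O(n^2\log n)$ calls per singular value subinterval, i.e.\ $O(n^3\log n)$ calls per polarized interval.

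Fix $j$ and normalize as in \Cref{sec:long}, so that $g_{j+1}=1$ and $\mx(1)=\one$. The breakpoints $1/\sigma_i$ that fall in $[\tilde g,1]$ split this interval into at most $\dim(\pi_N(W))+1\le n+1$ subintervals, and the index $k$ computed by the algorithm is nonincreasing as $g(x)$ decreases. For a subinterval $[\hat g,1/\sigma_{k+1}]$ with $k<\dim(\pi_N(W))$, I will split it into three parts.
\begin{enumerate}
\item In the top part, the gap lies in $(1/(f_2(n)\sigma_{k+1}),1/\sigma_{k+1}]$. Here Wallacher steps run until the gap falls below $1/(f_2(n)\sigma_{k+1})$; by \Cref{iterated-geometric-progress} this takes $O(n\log f_2(n))=O(n\log n)$ steps.
\item In the deep part, Condition~\eqref{eq:deep} holds and \textsc{Long-Steps} is invoked with $\bar x,y^{(0)}$ satisfying the required properties with $f_5=f_6=n$. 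By \Cref{scaling-up}, each iteration either strictly enlarges $S_p$, or brings the gap to at most $f_1(n)\hat g$. Since $S_p\subseteq N$, this phase ends after at most $n$ iterations (one more \textsc{Ratio-Circuit} call follows in the main loop). I need to check that whenever $S_p=N$ the gap is already at most $f_1(n)\hat g$; this follows from \Cref{gap-on-N} together with $x_N\le 3ny^{(p)}_N$ and \Cref{lem:gap}, as in the proof of \Cref{scaling-up}.
\item In the bottom part, the gap lies in $(\hat g,f_1(n)\hat g]$. By \Cref{iterated-geometric-progress}, crossing $\hat g$ takes $O(n\log f_1(n))=O(n\cdot n\log n)=O(n^2\log n)$ steps, since $\log f_1(n)=O(n\log n)$.
\end{enumerate}
If the subinterval is short, i.e.\ $1/\sigma_{k+1}\le f_1(n)f_2(n)\hat g$, Wallacher steps alone cross it in $O(n^2\log n)$ steps.

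It remains to handle the subinterval with $k=\dim(\pi_N(W))$, which is the lowest one, $[\hat g,1/\sigma_{\dim}]$ up to $g_{j+1}=1$. If $\tilde g\le 1/(8n)$, then \Cref{smallest_singular_value} gives $\sigma_{\dim}\le 2n^{3/2}$. Any gap $g$ with $k=\dim$ satisfies $1/g\le \sigma_{\dim}$, so $g\ge 1/(2n^{3/2})$, and the stretch from $1$ down to $1/(2n^{3/2})$ costs $O(n\log n)$ Wallacher steps. Below that point the subinterval structure takes over. If instead $\tilde g>1/(8n)$, the whole polarized interval has ratio at most $8n$ and costs $O(n\log n)$ steps. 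One subtlety remains: the step from a gap in interval $j$ into interval $j-1$. Each call changes the polarized interval or subinterval at most once per boundary crossed, so this adds only $O(1)$ calls per subinterval. Summing gives $O(n)\cdot O(n^2\log n)=O(n^3\log n)$ calls per polarized interval, and multiplying by $r$ yields the theorem.

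I expect the main obstacle to be the bookkeeping at the boundaries of the deep part. The \textsc{Long-Steps} loop must end with the gap at most $f_1(n)\hat g$ after at most $n+1$ iterations, and the subsequent Wallacher phase must be charged to the bottom part of the same subinterval rather than restarting the count.
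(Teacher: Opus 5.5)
Your proposal is correct and follows essentially the paper's argument. Charging each \textsc{Ratio-Circuit} call to a (polarized interval, singular-value subinterval) pair is the same bookkeeping as the paper's potential $\Phi(g)=1+n\cdot j+C_\sigma(\ell_N^{W_j},[g_{j+1}/g,\infty))$, and your per-subinterval analysis matches the paper's proof: Wallacher steps down to $1/(f_2(n)\sigma_{k+1})$, then at most $|N|$ \textsc{Long-Steps} iterations via \Cref{scaling-up}, then $O(n\log f_1(n))$ Wallacher steps, with \Cref{smallest_singular_value} handling $k=\dim(\pi_N(W))$. One slip: the $k=\dim(\pi_N(W))$ subinterval is the topmost one (gaps $g\ge 1/\sigma_{\dim(\pi_N(W))}$), not the lowest, although your subsequent argument treats it correctly.
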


\begin{proof}
Let $x^{(0)}$ denote the initial feasible point given as input and define $g^{(0)}:=g(x^{(0)})$.
Let $g\in[0,g^{(0)}]$ and let $[g_j,g_{j+1}]$ be the polarized interval containing $g$, with associated polarized partition $(B,N)$.
We define a potential function $\Phi:\R_{\ge0}\to\Z_{\geq 0}$ for the gap of the iterates of the circuit walk as
\begin{equation*}
    \Phi(g) = \begin{cases}
        1+n\cdot\max\{j\in \Z_{\geq 0}:g> g_j\}+C_\sigma(\ell_N^{W_j},[g_{j+1}/g,\infty)), &\text{ if }g>0\\
        0, &\text{ if }g = 0.
    \end{cases}
\end{equation*}

It is easy to see that $\Phi(g(x))=0$ if and only if $g(x)=0$, i.e., $x$ is optimal.
Further, $\Phi(g(x))$ is monotonically nonincreasing as $g(x)$ decreases.
Finally, $\Phi(g)\le O(n\cdot\sum_{i\in[n]}\slc_{1/2}(x_i^\mcp,g^{(0)}))$.
Thus, it remains to show that Algorithm~\ref{alg:non-det} decreases the potential function within $O(n^2\log n)$ circuit augmentation steps.

Let $x\in P$ be a feasible point produced by Algorithm~\ref{alg:non-det}.
Let $[g_j,g_{j+1}]$ be the polarized interval containing $g(x)$, let $\sigma:=\sigma(\ell_N^{W_j})$ and define $k:=C_\sigma(\ell_N^{W_j},[g_{j+1}/g(x),\infty))$.
As long as $k=\dim(\pi_N(W_j))$, Algorithm~\ref{alg:non-det} only uses steps computed by \textsc{Ratio-Circuit}$(A,c,\zero,1/x)$.
By \Cref{smallest_singular_value} and \Cref{iterated-geometric-progress}, the potential function decreases within $O(n\log n)$ augmentation steps.
Thus, we can assume that $k<\dim(\pi_N(W_j))$ and $\sigma_{k+1}>0$.
While $g(x)>1/(f_2(n)\cdot\sigma_{k+1})$, Algorithm~\ref{alg:non-det} again only uses steps computed by \textsc{Ratio-Circuit}$(A,c,\zero,1/x)$.
The same analysis reveals that the algorithm produces a new iterate within $O(n\log n)$ augmentation steps such that $g(x)\le1/(f_2(n)\cdot\sigma_{k+1})$.

If $g(x)>f_1(n)\cdot\hat g$, then we initialize $p=0$ and augment in the direction of a solution to~\eqref{sys:long_steps_lp}.
As long as the condition $g(x)>f_1(n)\cdot\hat g$ is satisfied, we continue running Algorithm~\ref{alg:long-steps}.
By \Cref{scaling-up}, we know that $g(x)$ decreases below $f_1(n)\cdot\hat g$, or we increase the set of small coordinates.
Thus, after at most $|N|$ such augmentation steps, the iterate fulfills $x_i\le f_1(n)\cdot \hat g$ for all $i\in N$. 
Since $x_i\leq 1$ for all $i\in B$, we have $g(x)\le 2nf_1(n)\cdot \hat g $ by \Cref{gap-on-N}.
If the potential function did not decrease, Algorithm~\ref{alg:non-det} again only uses steps of type \textsc{Ratio-Circuit}$(A,c,\zero,1/x)$.
By \Cref{iterated-geometric-progress}, the potential function decreases within at most $O(n^2\log n)$ steps.
Clearly, the total number of iterations is dominated by this last application of \Cref{iterated-geometric-progress}.
\end{proof}

\section{Implementing Algorithm~\ref{alg:non-det}}\label{sec:implementation}

In this section, we remove the max central path and its polarized decomposition from the input, thereby providing a full implementation of Algorithm~\ref{alg:non-det}.
Recall that this additional information is only used to traverse long polarized intervals, as short polarized intervals can be crossed in $O(n^2 \log n)$ iterations of \textsc{RatioCircuit}$(A,c,\zero,1/x)$.
Two major difficulties arise when we try to run \textsc{Long-Steps}:
\begin{enumerate}[label={(\arabic*)}]
    \item Identifying the polarized partition of the current polarized interval;
    \item Computing the points $\bar x$ and $y^{(0)}$.
\end{enumerate} 

To solve these issues, we first show how to approximate the max central path (\Cref{sec:approx_mcp}).
In particular, given any feasible solution $x$ to \eqref{lp} with optimality gap $g$, we approximate $\mx(g)$ to within an $O(n)$-factor using the \textsc{Ratio-Circuit} oracle.
We remark that this is the only place where the dual optimal solution returned by the oracle is used.
After that, we use this approximation to guess the polarized partition (\Cref{sec:guess_partition}), and to compute the points $\bar x$ and $y^{(0)}$ (\Cref{sec:compute_anchors}).
Finally, we put everything together in \Cref{sec:finale}.

\subsection{Approximating the Max Central Path}
\label{sec:approx_mcp}

Given a feasible solution $x$ to \eqref{lp} with optimality gap $g\in \R_{>0}$, we show how to approximate $\mx(g)$ to within a factor of $O(n)$.
First, we approximate the value of $g$, which is unknown to us.

\begin{lemma}\label{lem:gap_approx}
Let $x$ be a feasible solution to \eqref{lp} with optimality gap $g\in \R_{\geq 0}$.
Let $(z,s,\lambda)$ be the primal-dual solution returned by \textsc{Ratio-Circuit}$(A,c,\zero,1/x)$.
Then, $g/n\leq \lambda \leq g$. 
\end{lemma}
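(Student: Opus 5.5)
The plan is to use LP duality for \eqref{sys:min_ratio} with $u=c$, $v=\zero$, $w=1/x$, and compare the optimum with $g$ from both sides. Let $\lambda^\star$ be the common optimal value, so that $-\lambda^\star=\pr{c}{z}$ for the primal optimum $z$ and $\lambda=\lambda^\star$ for the returned dual solution. It therefore suffices to show
\[
-g\le \min\{\pr{c}{h}: Ah=\zero,\ \pr{1/x}{h^-}\le 1\}\le -g/n .
\]
If $g=0$, then $x$ is optimal by \Cref{ratio-geometric}. In that case no $h$ with $\pr{c}{h}<0$ can be feasible, since otherwise a small step would stay in $P$ (we only decrease coordinates up to a fraction of $x$). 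So the optimum is $0$ and $\lambda=0$. From here on assume $g>0$.

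\emph{Upper bound} ($\lambda\le g$). Take any feasible $h$. The constraint $\pr{1/x}{h^-}\le1$ gives $h_i^-\le x_i$, so $x+h\ge\zero$. Together with $A(x+h)=b$ this makes $x+h\in P$. Hence $\pr{c}{x+h}\ge v^*$, i.e., $\pr{c}{h}\ge -g$, and so $\lambda=-\min\ge\ldots$ is at most $g$.

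\emph{Lower bound} ($\lambda\ge g/n$). I will exhibit the feasible point $h=(x^*-x)/n$, where $x^*$ is an optimal solution. It lies in $\ker(A)$. Moreover $h^-\le x/n$ coordinatewise, so $\pr{1/x}{h^-}\le \sum_{i:x_i>0} 1/n\le 1$; coordinates with $x_i=0$ have $h_i=x^*_i/n\ge0$, so they contribute nothing under the $\infty\cdot0$ convention. Its objective value is $\pr{c}{h}=-g/n$. Therefore the minimum is at most $-g/n$, i.e., $\lambda\ge g/n$.

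The only point needing care, which I expect to be the main subtlety, is identifying the dual variable $\lambda$ in \eqref{sys:min_ratio_dual} with minus the primal optimum. Boundedness of the primal, which holds by the upper-bound argument, gives strong duality for the LP reformulation \eqref{sys:min_ratio_lp}. With $v=\zero$ the dual constraint reads $\zero\le s\le\lambda/x$, and its optimal value is $-\lambda$. So the returned optimal dual $\lambda$ equals $-\pr{c}{z}$, which lies in $[g/n,g]$.
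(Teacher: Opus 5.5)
Your proof is correct and follows the paper's argument: $x+z\in P$ gives $\pr{c}{z}\ge -g$, the feasible point $(x^*-x)/n$ gives $\pr{c}{z}\le -g/n$, and LP duality identifies $\lambda=-\pr{c}{z}$. The separate $g=0$ case is unnecessary, since both bounds hold uniformly. Also, $g=0$ makes $x$ optimal by definition, not by \Cref{ratio-geometric}.
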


\begin{proof}
We have $-g\leq \pr{c}{z} \leq  -g/n$.
The first inequality is due to the feasibility of $x + z$ to \eqref{lp}.
The second inequality holds because for any optimal solution $x^*$ to \eqref{lp}, $(x^* - x)/n$ is feasible to \eqref{sys:min_ratio} with $u := c$, $v := \zero$ and $w := 1/x$.
The proof is complete by noticing that $\pr{c}{z} = -\lambda$.
\end{proof}

Let $(z,s,\lambda)$ be as in \Cref{lem:gap_approx}.
We may assume that $\lambda>0$, as otherwise $g = 0$ and so $x$ is optimal.
Fix a coordinate $i\in [n]$.
To approximate $\mx_i(g)$, we run \textsc{Ratio-Circuit}$(A,-e_i,s/\lambda,2/x-s/\lambda)$.
By the constraint $\zero \leq s \leq \lambda/x$ in \eqref{sys:min_ratio_dual}, we have
\begin{equation}\label{eq:valid_input}
    \frac{s}{\lambda} \geq 0 \qquad \text{and} \qquad \frac2x - \frac{s}{\lambda} \geq \frac1x \geq 0
\end{equation} 
so this is a valid input to \textsc{Ratio-Circuit}.
The next theorem shows that the returned solution yields an $O(n)$-approximation of $\mx_i(g)$.

\begin{theorem}\label{thm:mcp_approx}
Let $x$ be a feasible solution to \eqref{lp} with optimality gap $g\in \R_{> 0}$.
Let $(z,s,\lambda)$ be the primal-dual solution returned by \textsc{Ratio-Circuit}$(A,c,\zero,1/x)$.
Let $z'$ be the primal solution returned by \textsc{Ratio-Circuit}$(A,-e_i, s/\lambda, 2/x - s/\lambda)$.
Then, $x + z'$ is a feasible solution to \eqref{lp} with optimality gap at most $2g$ and 
\[\frac{\mx_i(g)}{2n}\leq x_i + z'_i \leq 2\mx_i(g).\]
\end{theorem}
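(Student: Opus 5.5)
The whole argument rests on one algebraic identity, which rewrites the oracle's normalization constraint. Let $(z,s,\lambda)$ come from \textsc{Ratio-Circuit}$(A,c,\zero,1/x)$ with $\lambda>0$, and write $v:=s/\lambda$ and $w:=2/x-s/\lambda$. For any $z\in\ker(A)$, dual feasibility $A^\top y+s=c$ gives $\pr{s}{z}=\pr{c}{z}$. Hence
\[\pr{v}{z^+}+\pr{w}{z^-}=\frac{\pr{c}{z}}{\lambda}+2\Bigl\langle\frac1x,z^-\Bigr\rangle .\]
So the constraint of the second oracle call says $\pr{c}{z}/\lambda+2\sum_i z^-_i/x_i\le 1$. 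From this identity, together with \eqref{eq:valid_input} and \Cref{lem:gap_approx}, I will read off feasibility, the gap bound, and both approximation bounds.

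\emph{Feasibility and gap.} Let $z'$ be the returned solution. Since $s/\lambda\ge\zero$, the term $\pr{v}{z'^+}$ is nonnegative, so $\pr{w}{z'^-}\le 1$. By \eqref{eq:valid_input} we have $w\ge 1/x$, hence $\sum_i z'^-_i/x_i\le 1$. In particular $z'^-_i\le x_i$ for every $i$, so $x+z'\ge\zero$; together with $Az'=\zero$ this shows $x+z'\in P$.

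For the gap, $\pr{c}{z'}=\lambda(\pr{v}{z'^+}-\pr{v}{z'^-})\le\lambda\pr{v}{z'^+}\le\lambda\le g$, using \Cref{lem:gap_approx}. Hence $g(x+z')\le 2g$. The upper bound then follows: $x_i+z'_i\le\mx_i(2g)\le 2\mx_i(g)$ by \Cref{lem:mcp_bounds} applied with the pair $(g,2g)$.

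\emph{Lower bound.} Let $x^{(i)}$ attain $\mx_i(g)$ in $P_g$, and consider the candidate $\hat z:=(x^{(i)}-x)/(2n)\in\ker(A)$. By the identity, its constraint value is
\[\frac{g(x^{(i)})-g}{2n\lambda}+\frac{1}{n}\sum_j\frac{(x^{(i)}_j-x_j)^-}{x_j}.\]
The first term is $\le 0$ because $g(x^{(i)})\le g$. Each summand in the second term is at most $1$ because $(x^{(i)}_j-x_j)^-\le x_j$, so the whole value is at most $1$. Thus $\hat z$ is feasible. Optimality of $z'$ for the objective $-e_i$ then gives $z'_i\ge\hat z_i=(\mx_i(g)-x_i)/(2n)$. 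Therefore
\[x_i+z'_i\ge\Bigl(1-\tfrac1{2n}\Bigr)x_i+\frac{\mx_i(g)}{2n}\ge\frac{\mx_i(g)}{2n}.\]

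\emph{Main obstacle.} The delicate point is making sure the oracle returns an optimum at all, i.e.\ that \eqref{sys:min_ratio} with these weights is bounded, since $v$ may vanish on some coordinates. I would handle this with the feasibility argument above. It shows that every feasible $z$ satisfies $x+z\in P_{2g}$, and $P_{2g}$ is bounded under the standing boundedness assumption. Hence the feasible region of \eqref{sys:min_ratio} is bounded and an optimal basic solution exists. Apart from this, the proof only requires keeping the $\infty$ conventions consistent, which is harmless here because $w\ge 1/x$ is finite wherever $x>0$.
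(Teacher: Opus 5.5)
Your proposal is correct and follows the paper's proof essentially step for step. It uses the same rewriting of the normalization constraint via $\langle s,z\rangle=\langle c,z\rangle$ on $\ker(A)$, the same feasibility and gap argument giving $x_i+z'_i\le \mx_i(2g)\le 2\mx_i(g)$, and the same comparison vector $(x^{(i)}-x)/(2n)$ for the lower bound. You also check the feasibility of that comparison vector explicitly and argue that the second oracle call is bounded (every feasible $z$ has $x+z\in P_{2g}$), both of which the paper leaves implicit.
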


\begin{proof}
The system \eqref{sys:min_ratio} with $u := -e_i$, $v := s/\lambda$ and $w := 2/x - s/\lambda$ can be equivalently written as
\begin{equation}
    \label{sys:mcp_approx}
    \begin{aligned}
        &\max\; z_i \\
        &\mathrm{s.t.} \quad Az =\zero\, , \\
        &\qquad\; \frac{\pr{c}{z}}{\lambda} + \pr{\frac2x}{z^-} \leq 1\, .
    \end{aligned}
\end{equation}
Notice that $\pr{c}{z} = \pr{s}{z}$ for all $z\in \ker(A)$ because $s \in \im(A) + c$.

First, $x+z'$ is feasible to \eqref{lp} because $z'\in \ker(A)$ and
\[\pr{\frac1x}{(z')^-} \leq \pr{\frac{s}{\lambda}}{(z')^+} + \pr{\frac2x - \frac{s}{\lambda}}{(z')^-} \leq 1\]
by \eqref{eq:valid_input}.
Next, letting $s^*$ be any optimal solution to \eqref{dlp}, $x + z'$ has optimality gap
\[\pr{s^*}{x+z'} = g + \pr{c}{z'} \leq g + \lambda \leq 2g. \]
Hence, 
\[x_i + z'_i \leq \mx_i(2g)\leq 2\mx_i(g)\]
where the first inequality is by the monotonicity of $\mx_i$, while the second inequality follows from the concavity and nonnegativity of $\mx_i$.

It is left to show the lower bound.
Let $\hat x$ be a feasible solution to \eqref{lp} such that $\hat x_i = \mx_i(g)$.
Since $(\hat x - x)/(2n)$ is feasible to \eqref{sys:mcp_approx}, we have $z'_i \geq (\mx_i(g) - x_i)/(2n)$ by the optimality of $z'$ to \eqref{sys:mcp_approx}.
Thus,
\[x_i + z'_i \geq \frac{\mx_i(g)}{2n} + \frac{2n-1}{2n} x_i \geq \frac{\mx_i(g)}{2n}. \qedhere\]
\end{proof}

\subsection{Guessing the Polarized Partition}
\label{sec:guess_partition}

In this subsection, we show how to guess the polarized partition $(B,N)$ of the current polarized interval, when the interval is sufficient long.  
The main idea is to keep track of an approximation of the max central path over a sequence of calls to \textsc{Ratio-Circuit} with Wallacher's rule. 
At the end of this sequence, the variables that did not change much (compared to the start) will form our guess of $B$, while the rest will form our guess of $N$.
The number of such calls will be $O(n\log n)$, which can be easily paid for by the overall running time analysis.

We first set up the required notation.
For $t\geq 0$, let $x^{(t)}$ be the $t$-th iterate of our circuit walk. 
Let $g^{(t)}$ denote the optimality gap of $x^{(t)}$.
By applying \Cref{lem:gap_approx} to $x^{(t)}$, we can approximate $g^{(t)}$.
In particular, we obtain $(z^{(t)},s^{(t)},\lambda^{(t)})$ such that 
\begin{equation}\label{gap_approx}
    \frac{g^{(t)}}{n}\leq \lambda^{(t)}\leq g^{(t)}.
\end{equation}
By applying \Cref{thm:mcp_approx} to $x^{(t)}$, we can also approximate $\mx(g^{(t)})$.
In particular, for every $i\in [n]$, we obtain $z^{(t,i)}$ such that $x^{(t,i)} := x^{(t)} + z^{(t,i)}$ is feasible to \eqref{lp} with optimality gap at most $2g^{(t)}$.
Moreover, denoting $\hat x^{(t)}_i := x^{(t,i)}_i$ for all $i\in [n]$, we have
\begin{equation}\label{mcp_approx}
    \frac{\mx(g^{(t)})}{2n} \leq \hat x^{(t)} \leq 2\mx(g^{(t)}).
\end{equation}
Let us define $\bar x^{(t)} := \sum_{i=1}^n  x^{(t,i)}/n$ as a feasible approximation of $\hat x^{(t)}$.
Clearly, $\bar x^{(t)}$ is feasible to \eqref{lp} with optimality gap at most $2g^{(t)}$.
Furthermore,
\begin{equation}\label{mcp_approx_feas}
    \frac{\mx(g^{(t)})}{2n^2}\leq \bar x^{(t)} \leq 2\mx(g^{(t)}).
\end{equation}

Now, fix an iteration $t_0$.
We know that $g^{(t_0)}$ lies in some polarized interval $[g_j, g_{j+1}]$, which is unknown to us.
To guess the polarized partition $(B,N)$ of $[g_j, g_{j+1}]$, let $t\geq t_0$ be an iteration after more than $\lceil 4n\log(4n)\rceil$ calls to \textsc{Ratio-Circuit}$(A,c,\zero,1/x)$ starting from iteration $t_0$.
We define our guess as
\begin{equation}
    \tilde B := \left\{ i\in [n]:\frac{1}{16n}\leq \frac{\hat x_i^{(t)}}{\hat x_i^{(t_0)}}\leq 4n \right\} \qquad \qquad \tilde N := \left\{i\in [n]: \frac{1}{4n^2} \cdot \frac{\lambda^{(t)}}{\lambda^{(t_0)}}\leq \frac{\hat x_i^{(t)}}{\hat x_i^{(t_0)}}\leq 16n^2 \cdot \frac{\lambda^{(t)}}{\lambda^{(t_0)}} \right\}.
\end{equation}

Like $[g_j, g_{j+1}]$, the subinterval $[g_j, g^{(t_0)}]$ is $\frac14$-polarized and has the same polarized partition $(B,N)$.
This is because \eqref{eq:polarized-gamma} holds when $g_{j+1}$ is replaced with $g^{(t_0)}$.
To simplify the analysis, we will assume that $g^{(t_0)} = 1$ and $\mx_i(g^{(t_0)}) = 1$ for all $i\in [n]$.
We also denote $\tilde g := g_j$.

The following proposition shows that our guess is correct whenever $g^{(t)}$ lies in the same polarized interval as $g^{(t_0)}$.

\begin{proposition}\label{guess_partition}
If $g^{(t)}\in [\tilde g, 1]$, then $\tilde B = B$ and $\tilde N = N$.
\end{proposition}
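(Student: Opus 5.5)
The plan is to show that every $i\in B$ satisfies the defining inequalities of $\tilde B$ but not those of $\tilde N$, and every $i\in N$ satisfies those of $\tilde N$ but not those of $\tilde B$. Since $(B,N)$ partitions $[n]$, this gives $\tilde B=B$ and $\tilde N=N$. Throughout we use the normalization $g^{(t_0)}=1$, $\mx(1)=\one$, so that $\hat x^{(t_0)}\in[1/(2n),2]$ by \eqref{mcp_approx}. We also use that $[\tilde g,1]$ is $\frac14$-polarized with partition $(B,N)$, and $g^{(t)}\in[\tilde g,1]$. Write $\rho_i:=\hat x_i^{(t)}/\hat x_i^{(t_0)}$.

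\textbf{Step 1: bounds on $\rho_i$.} For $i\in B$, polarization gives $\frac14\le \mx_i(g^{(t)})\le 1$. Combining this with \eqref{mcp_approx} at both times yields $\frac{1}{16n}\le \rho_i\le 4n$, so $i$ satisfies the $\tilde B$ condition. For $i\in N$, polarization gives $g^{(t)}\le\mx_i(g^{(t)})\le 4g^{(t)}$, and hence $\frac{g^{(t)}}{4n}\le \rho_i\le 16 n g^{(t)}$. By \eqref{gap_approx} we have $\lambda^{(t)}/\lambda^{(t_0)}\in[g^{(t)}/n,\,n g^{(t)}]$, that is, $g^{(t)}\in[\lambda^{(t)}/(n\lambda^{(t_0)}),\,n\lambda^{(t)}/\lambda^{(t_0)}]$. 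Substituting, $\frac{1}{4n^2}\frac{\lambda^{(t)}}{\lambda^{(t_0)}}\le\rho_i\le 16n^2\frac{\lambda^{(t)}}{\lambda^{(t_0)}}$, so $i$ satisfies the $\tilde N$ condition.

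\textbf{Step 2: the two conditions are mutually exclusive.} This needs $g^{(t)}$ to be small. Since more than $\lceil 4n\log(4n)\rceil$ Wallacher steps were made after $t_0$, and the gap is nonincreasing and shrinks by a factor $(1-1/n)$ per Wallacher step (\Cref{ratio-geometric}), we get $g^{(t)}\le (1-1/n)^{4n\log(4n)}\le (4n)^{-4}$. I expect this to be the delicate step. It relies on all intermediate steps, including any \textsc{Long-Steps} augmentations, not increasing the gap; by \Cref{scaling-up}, they only decrease it. Then $\lambda^{(t)}/\lambda^{(t_0)}\le n g^{(t)}\le n(4n)^{-4}$.

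\textbf{Step 3: conclude.} For $i\in N$, $\rho_i\le 16n^2\cdot n\,(4n)^{-4}<\frac{1}{16n}$, so $i\notin\tilde B$. For $i\in B$, $\rho_i\ge \frac{1}{16n}>16n^2\cdot n(4n)^{-4}\ge 16n^2\lambda^{(t)}/\lambda^{(t_0)}$, so $i\notin\tilde N$. Both inequalities hold because $256n^3<(4n)^4$. This gives the two inclusions $B\subseteq\tilde B\setminus\tilde N$ and $N\subseteq \tilde N\setminus\tilde B$, and therefore equality.
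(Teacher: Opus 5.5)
Your strategy is the paper's. Step~1 reproduces the bounds the paper derives from polarization, \eqref{mcp_approx} and \eqref{gap_approx}, giving $B\subseteq\tilde B$ and $N\subseteq\tilde N$, and it is correct. Your separation argument in Steps~2--3 is the same inequality the paper uses to show $\tilde B\cap\tilde N=\emptyset$.

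Step~3, however, fails as written. Your justification ``$256n^3<(4n)^4$'' is off by a factor of $n$: in fact $16n^2\cdot n\cdot(4n)^{-4}=16n^3/(256n^4)=\frac{1}{16n}$ exactly. So the non-strict bound $g^{(t)}\le(4n)^{-4}$ only gives $16n^2\lambda^{(t)}/\lambda^{(t_0)}\le\frac{1}{16n}$. Both membership conditions defining $\tilde B$ and $\tilde N$ are non-strict, so this does not exclude $i\in B$ from $\tilde N$. Nor, via your chain, does it exclude $i\in N$ from $\tilde B$. The inequalities you chain together leave no room to spare. What you need is the strict bound $g^{(t)}<(4n)^{-4}$, which is exactly what the paper uses: $\lambda^{(t)}\le g^{(t)}<1/(16^2n^4)\le\lambda^{(t_0)}/(16^2n^3)$. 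This bound is available because strictly more than $\lceil 4n\log(4n)\rceil$ Wallacher steps were taken, so $g^{(t)}\le(1-1/n)(4n)^{-4}<(4n)^{-4}$.

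On the $N$ side you can also skip the lossy detour through the $\lambda$-ratio and use $\rho_i\le 16ng^{(t)}<1/(16n^3)\le 1/(16n)$ directly.

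A minor side remark: do not credit \Cref{scaling-up} for the fact that non-Wallacher steps never increase the gap, since its hypotheses need not hold. The correct reason is that $z=\zero$ is feasible for \eqref{sys:long_steps_lp}, so every direction it returns has $\pr{c}{z^*}\le 0$. In Algorithm~\ref{alg:implementation}, only Wallacher steps occur between $t_0$ and $t$ anyway.
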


\begin{proof}
By \eqref{mcp_approx}, we have 
\begin{align*}
    \qquad \frac{1}{2n} &\leq \hat x^{(t_0)}_i \leq 2   &\forall i\in [n].
\end{align*}
Since $[\tilde g, 1]$ is $\frac14$-polarized, by \eqref{gap_approx} and \eqref{mcp_approx} we also have
\begin{align*}
    \frac{1}{8n}&\le\hat x^{(t)}_i\le 2 &\forall i\in B,\\
    \frac{\lambda^{(t)}}{n\lambda^{(t_0)}}\cdot \frac{1}{2n}&\le\hat x^{(t)}_i\le 8 \cdot \frac{n\lambda^{(t)}}{\lambda^{(t_0)}} &\forall i\in N.
\end{align*}
Hence, $B\subseteq \tilde B$ and $N\subseteq \tilde N$.
To show equality, it suffices to show that $\tilde B\cap \tilde N = \emptyset$.
By \Cref{iterated-geometric-progress}, 
\begin{equation}\label{eq:gap_difference}
  \lambda^{(t)} \leq g^{(t)} < \frac{1}{16^2n^4} \leq \frac{\lambda^{(t_0)}}{16^2n^3}, 
\end{equation}
which completes the proof upon rearranging terms.
\end{proof}

\subsection{Computing the Points $\bar x$ and $y^{(0)}$}
\label{sec:compute_anchors}

Equipped with a candidate partition $(\tilde B, \tilde N)$ for the polarized interval $[g_j, g_{j+1}]$, the next step is to compute the lifting operator $\ell^{W_j}_{\tilde N}:\pi_{\tilde N}(W_j)\to \pi_{\tilde B}(W_j)$, where $W_j := {\rm diag}(\mx(g_{j+1}))^{-1} W$.
This seems difficult because we don't know $\mx(g_{j+1})$.
Fortunately, we have an approximation of the max central path at $g^{(t_0)}$. 
So, let $\ell^{W'}_{\tilde N}:\pi_{\tilde N}(W')\to \pi_{\tilde B}(W')$ where $W' := {\rm diag}(\mx(g^{(t_0)}))^{-1} W$.
We compute the lifting operator $\ell^{\tilde W}_{\tilde N}:\pi_{\tilde N}(\tilde W)\to \pi_{\tilde B}(\tilde W)$, where $\tilde W := {\rm diag}(\hat x^{(t_0)})^{-1} W$.
Letting $M$ be a matrix whose columns form an orthonormal basis of $\tilde W$, it can computed as
\[\ell^{\tilde W}_{\tilde N} = M_{\tilde B,\bullet}M^\dagger_{\tilde N,\bullet}.\]
See Lemma 2.3 of \cite{DHNV24} for a proof of the formula above.

Let $\sigma:= \sigma(\ell^{W'}_{\tilde N})$ and $\tilde \sigma := \sigma(\ell^{\tilde W}_{\tilde N})$ be the vector of sorted singular values of the two lifting operators.
Assuming that $\mx(g^{(t_0)})>\zero$, both vectors contain $\dim(\pi_{\tilde N}(W))$ entries.
By \eqref{mcp_approx} and \Cref{lemma:singular-value-scaling}, we have
\begin{equation}\label{eq:singular_values}
    \frac{1}{4n} \cdot\tilde \sigma\leq \sigma\leq 4n\cdot\tilde\sigma.
\end{equation}
So, the singular values of $\ell^{W'}_{\tilde N}$ and $\ell^{\tilde W}_{\tilde N}$ are multiplicatively close.

Let $k$ be the largest integer such that $4n^2\lambda^{(t_0)}/\tilde \sigma_k < \lambda^{(t)}$ (we set $k=0$ if no such integer exists).
For simplicity, we will again assume that $g^{(t_0)} = 1$ and $\mx_i(g^{(t_0)}) = 1$ for all $i\in [n]$.
We also denote $\tilde g := g_j$.

\begin{proposition}
If $g^{(t)}\in [\tilde g, 1]$, then $k<\dim(\pi_{\tilde N}(W))$.
\end{proposition}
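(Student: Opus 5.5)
We need to show that $k<\dim(\pi_{\tilde N}(W))$ when $g^{(t)}\in[\tilde g,1]$. By the choice of $k$ as the largest index with $4n^2\lambda^{(t_0)}/\tilde\sigma_k<\lambda^{(t)}$, and since the $\tilde\sigma_i$ are nonincreasing, $k=\dim(\pi_{\tilde N}(W))=:D$ holds iff $4n^2\lambda^{(t_0)}/\tilde\sigma_D<\lambda^{(t)}$. So it suffices to prove the reverse inequality $\tilde\sigma_D\le 4n^2\lambda^{(t_0)}/\lambda^{(t)}$. (If $D=0$ the claim is vacuous or handled trivially, so assume $N\neq\emptyset$.)

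First, by \Cref{guess_partition}, $g^{(t)}\in[\tilde g,1]$ gives $(\tilde B,\tilde N)=(B,N)$. We therefore work with the true polarized partition of the $\frac14$-polarized interval $[\tilde g,1]$, with normalization $g^{(t_0)}=1$ and $\mx(1)=\one$, so that $W'=W$. By \eqref{eq:singular_values}, $\tilde\sigma_D\le 4n\,\sigma_D$, so it suffices to show $\sigma_D(\ell^W_N)\le n\lambda^{(t_0)}/\lambda^{(t)}$.

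To bound the smallest singular value, I mimic \Cref{smallest_singular_value} with $g^{(t)}$ in place of $\tilde g$. Take $z:=\tilde x^\mcp(g^{(t)})-\tilde x^\mcp(1)\in W$. By \eqref{approx-N}, for $i\in N$ we have $\tilde x^\mcp_i(1)\ge 1/n$ and $\tilde x^\mcp_i(g^{(t)})\le 4g^{(t)}$. The number of Wallacher steps taken (via \Cref{iterated-geometric-progress}, as in \eqref{eq:gap_difference}) gives $g^{(t)}<1/(16^2n^4)$, hence $|z_i|\ge 1/(2n)$ and $\|z_N\|\ge\sqrt{|N|}/(2n)$. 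On the other hand, $\|\ell^W_N(z_N)\|\le\|z_B\|\le\sqrt{|B|}$, since the lift minimizes the norm and both endpoints lie in $[0,1]^B$. Therefore $\sigma_D\le 2n\sqrt{|B|/|N|}\le 2n^{3/2}$.

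The main obstacle is converting this absolute bound into one relative to $\lambda^{(t_0)}/\lambda^{(t)}$. By \eqref{gap_approx}, $\lambda^{(t_0)}\ge 1/n$ and $\lambda^{(t)}\le g^{(t)}<1/(16^2n^4)$, so $n\lambda^{(t_0)}/\lambda^{(t)}\ge 16^2n^4>2n^{3/2}$. This gives $\sigma_D\le n\lambda^{(t_0)}/\lambda^{(t)}$, and hence $\tilde\sigma_D\le 4n^2\lambda^{(t_0)}/\lambda^{(t)}$, i.e., $k<D$. The only delicate points are the direction of \eqref{eq:singular_values} and the validity of the assumption $\mx(g^{(t_0)})>\zero$. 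Both are already in force from the setup and \Cref{lemma:singular-value-scaling}.
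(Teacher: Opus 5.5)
Your proof is correct and follows essentially the paper's argument: bound $\sigma_{\min}(\ell^W_N)\le 2n^{3/2}$ using the construction of \Cref{smallest_singular_value}, transfer this to $\tilde\sigma$ via \eqref{eq:singular_values}, and compare against $\lambda^{(t)}/\lambda^{(t_0)}$ using \eqref{eq:gap_difference}. The only difference in the main chain is that you rerun the lemma's construction at $g^{(t)}$, whereas the paper notes $\tilde g\le g^{(t)}\le 1/(8n)$ and cites the lemma directly.

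One small correction concerns your parenthetical on the edge case $\dim(\pi_{\tilde N}(W))=0$. That case is not vacuous: there $k=0$, and the claim would be false. It is, however, excluded by the hypothesis. If $N=\emptyset$, then \Cref{gap-on-N} applied to $x^{(t_0)}\le\one$ gives $1\le 8n\tilde g$, which contradicts $\tilde g\le g^{(t)}<1/(16^2n^4)$.
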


\begin{proof}
By \eqref{eq:gap_difference}, we have $\tilde g\leq g^{(t)}\leq 1/(8n)$. So, we can apply \Cref{smallest_singular_value} and \eqref{eq:singular_values} to obtain $\tilde \sigma_{\dim(\pi_{\tilde N}(W))}\leq 4n\sigma_{\dim(\pi_{\tilde N}(W))} \leq 8n^{5/2}$.
Then, 
\[\frac{4n^2\lambda^{(t_0)}}{\tilde \sigma_{\dim(\pi_{\tilde N}(W))}}\geq \frac{4n^2(1/n)}{8n^{5/2}} = \frac{1}{2n^{3/2}} >\lambda^{(t)}\]
where the last inequality is due to \eqref{eq:gap_difference}.
\end{proof}

If $k\geq 1$, then by \eqref{eq:singular_values} and \eqref{gap_approx}
\begin{equation}\label{eq:left_endpoint}
    \frac{1}{\sigma_k} \leq \frac{4n}{\tilde \sigma_k} \leq \frac{4n^2\lambda^{(t_0)}}{\tilde \sigma_k} < \lambda^{(t)} \leq g^{(t)}.
\end{equation}
Let $\hat g := \max\{1/\sigma_k,\tilde g\}$ (we set $\hat g := \tilde g$ if $k=0$).
Our goal is to reduce $g^{(t)}$ to below $\hat g$.
Due to our choice of $k$,
\[g^{(t)} \leq n\lambda^{(t)} \leq \frac{4n^3\lambda^{(t_0)}}{\tilde \sigma_{k+1}} \leq \frac{16n^4}{\sigma_{k+1}}.\]
We may assume that
\begin{equation}\label{eq:right_distance}
    g^{(t)}\leq \frac{1}{512n^{7/2}\sigma_{k+1}}.
\end{equation}
by performing additional $\lceil n\log(8192n^{15/2}) \rceil$ calls to \textsc{Ratio-Circuit}$(A,c,\zero,1/x)$.

The hard case is when $g^{(t)}>f_1(n)\cdot\hat g$.
In order to apply \textsc{Long-Steps}, we need to produce a feasible point $\bar x$ satisfying Properties~\ref{cond:x_gap} and \ref{cond:x_lbound}, and another feasible point $y^{(0)}$ satisfying Properties \ref{cond:y_gap} and \ref{cond:y_lbound}.

We set $\bar x := \bar x^{(t)}$.
Then, Property~\ref{cond:x_gap} holds because $g(\bar x)\leq 2g^{(t)}$, while
Property~\ref{cond:x_lbound} is satisfied by \eqref{mcp_approx_feas}.

To produce $y^{(0)}$, let $V_d$ be the singular subspace for $\ell^{\tilde W}_{\tilde N}$ of dimension $d := \dim(\pi_{\tilde N}(W)) - k$, and let $\Pi_{V_d}$ be the orthogonal projection onto $V_d$.
We project $-\bar x^{(t)}_{\tilde N}/\hat x_{\tilde N}^{(t_0)}$ onto $V_d$, apply the lifting operator $\ell^{\tilde W}_{\tilde N}$, and rescale by $\hat x^{(t_0)}$ to obtain the following vector in $W$
\[\Delta x := \hat x^{(t_0)} \circ \left(\ell^{\tilde W}_{\tilde N}\bigl(\Pi_{V_d}(-\bar x^{(t)}_{\tilde N}/\hat x_{\tilde N}^{(t_0)})\bigr),\Pi_{V_d}(-\bar x^{(t)}_{\tilde N}/\hat x_{\tilde N}^{(t_0)})\right).\]
The following lemma bounds the distortion caused by the projection on $\tilde N$.

\begin{lemma}\label{lem:proj_error}
If $g^{(t)}\in [\tilde g, 1]$, then $\|\Delta x_{\tilde N} + \bar x^{(t)}_{\tilde N}\|\leq 48 n^{5/2}\hat g$.
\end{lemma}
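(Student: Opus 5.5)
The plan is to reduce the claim to \Cref{projections}, applied in the rescaled subspace $\tilde W = {\rm diag}(\hat x^{(t_0)})^{-1} W$, using a comparison point of small gap. Throughout I keep the normalization $g^{(t_0)}=1$, $\mx(g^{(t_0)})=\one$.

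\textbf{Setup.} Since $g^{(t)}\in[\tilde g,1]$, \Cref{guess_partition} gives $\tilde B=B$ and $\tilde N=N$. Then \eqref{mcp_approx} gives $\frac{1}{2n}\le \hat x^{(t_0)}\le 2$. Write $u:=-\bar x^{(t)}_N/\hat x^{(t_0)}_N\in\pi_N(\tilde W)$. By definition,
\[
\Delta x_N+\bar x^{(t)}_N=\hat x^{(t_0)}_N\circ\bigl(\Pi_{V_d}(u)-u\bigr),
\]
so it suffices to show $\|\Pi_{V_d}(u)-u\|\le 24n^{5/2}\hat g$. If $k=0$, then $V_d=\pi_N(\tilde W)$, the projection is the identity, and the claim is trivial. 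So assume $k\ge 1$.

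\textbf{Comparison point and splitting.} Let $y:=\tilde x^\mcp(\hat g)$, which is feasible, and set $z:=(y-\bar x^{(t)})/\hat x^{(t_0)}\in\tilde W$. Then $z_N=u+y_N/\hat x^{(t_0)}_N$. Since $\Pi_{V_d}$ is linear,
\[
\Pi_{V_d}(u)-u=\bigl(\Pi_{V_d}(z_N)-z_N\bigr)-\bigl(\Pi_{V_d}(y_N/\hat x^{(t_0)}_N)-y_N/\hat x^{(t_0)}_N\bigr).
\]
The second bracket has norm at most $\|y_N/\hat x^{(t_0)}_N\|\le 2n\|y_N\|$. By polarization of $[\tilde g,1]$ with partition $(B,N)$, we have $y_N\le\mx_N(\hat g)\le 4\hat g$, so this term is at most $8n^{3/2}\hat g$.

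\textbf{Main term.} For the first bracket, note that minimizing $\|z_N-w_N\|$ over $w\in L_N^{\tilde W}(V_d)$ yields $w_N=\Pi_{V_d}(z_N)$. Hence \Cref{projections}, with $V_d$ a singular subspace for $\ell^{\tilde W}_N$ of dimension $\dim(\pi_N(\tilde W))-k$, gives
\[
\|\Pi_{V_d}(z_N)-z_N\|\le\|z_B\|/\tilde\sigma_k .
\]
Two bounds finish this term.
\begin{itemize}
\item On $B$: we have $y_B\le\mx_B(\hat g)\le\one$ and $\bar x^{(t)}_B\le 2\mx_B(g^{(t)})\le 2$ by \eqref{mcp_approx_feas}. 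Together with $\hat x^{(t_0)}\ge 1/(2n)$, this gives $|z_i|\le 4n$ for $i\in B$, so $\|z_B\|\le 4n^{3/2}$.
\item On the singular value: by \eqref{eq:singular_values}, $1/\tilde\sigma_k\le 4n/\sigma_k$. Since $\hat g\ge 1/\sigma_k$ by its definition, $1/\tilde\sigma_k\le 4n\hat g$.
\end{itemize}
So the first bracket is at most $16n^{5/2}\hat g$.

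\textbf{Conclusion.} Combining both terms and multiplying by $\|\hat x^{(t_0)}\|_\infty\le 2$ gives
\[
\|\Delta x_N+\bar x^{(t)}_N\|\le 2\bigl(16n^{5/2}+8n^{3/2}\bigr)\hat g\le 48n^{5/2}\hat g .
\]

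The main obstacle is the choice of comparison point. \Cref{projections} controls the projection error by $\|z_B\|$, but $u$ itself is not the $N$-part of a vector in $\tilde W$ with small $B$-part. Subtracting the small-gap point $y$ fixes this, at an extra cost that is only of order $n^{3/2}\hat g$. One must also use the bound $1/\sigma_k\le\hat g$, not the bound from \eqref{eq:left_endpoint}, to get a bound in terms of $\hat g$.
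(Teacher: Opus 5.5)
Your proof is correct and is essentially the paper's argument. You compare $-\bar x^{(t)}_N/\hat x^{(t_0)}_N$ with the $N$-part of a vector $z\in\tilde W$ built from a small-gap point $\tilde x^{\mcp}(\cdot)$, apply \Cref{projections} in $\tilde W$, and bound $\|z_B\|\le 4n^{3/2}$ and $1/\tilde\sigma_k\le 4n/\sigma_k\le 4n\hat g$.

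The differences are cosmetic: the paper uses $\tilde x^{\mcp}(\tilde g)$ instead of $\tilde x^{\mcp}(\hat g)$, and the closest-point property of $\Pi_{V_d}$ instead of linearity, which gives the same constants. With your choice, though, you should state explicitly that $\hat g\in[\tilde g,1]$ before invoking polarization at $\hat g$; this holds because \eqref{eq:left_endpoint} gives $\hat g\le g^{(t)}\le 1$ when $k\ge 1$.
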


\begin{proof}
Since $g^{(t)}\in [\tilde g, 1]$, we have $\tilde B = B$ and $\tilde N = N$.
Let us denote $z := (\tilde x^\mcp(\tilde g) - \bar x^{(t)})/\hat x^{(t_0)}\in \tilde W$.
Then,
\begin{align*}
    \left\|\Delta x_N+\bar x^{(t)}_{N} \right\| &=  \left\|\hat x_{N}^{(t_0)} \circ \Pi_{V_d}(-\bar x^{(t)}_{N}/\hat x_{N}^{(t_0)}) + \bar x^{(t)}_{N} \right\| \\
    &\leq \left\|\hat x^{(t_0)}_N\right\|_\infty  \left\|\Pi_{V_d}(-\bar x^{(t)}_{N}/\hat x_{N}^{(t_0)}) + \bar x^{(t)}_{N}/\hat x_{N}^{(t_0)} \right\| \\ 
    &\leq 2  \left\|\Pi_{V_d}(-\bar x^{(t)}_{N}/\hat x_{N}^{(t_0)}) + \bar x^{(t)}_{N}/\hat x_{N}^{(t_0)} \right\| \tag{\eqref{mcp_approx} and $\mx(g^{(t_0)}) = \one$}\\
    &\leq 2 \left\| \Pi_{V_d}(z_N) +  \bar x^{(t)}_{N}/\hat x_{N}^{(t_0)}  \right\|  \tag{orthogonal projection is closest} \\
    &= 2 \left\| \Pi_{V_d}(z_N) - z_N + \tilde x^\mcp_N(\tilde g)/\hat x_{N}^{(t_0)} \right\| \\
    &\leq 2 \left(\left\| \Pi_{V_d}(z_N)- z_N \right\|  + \left\|\tilde x^\mcp_N(\tilde g)/\hat x_{N}^{(t_0)}\right\| \right)\\
    &\leq 2 \left(\frac{\|z_B\|}{\tilde \sigma_k} + 8n^{3/2}\tilde g \right).  \tag{\Cref{projections}, \eqref{approx-N} and \eqref{mcp_approx}} 
\end{align*}
Since $[\tilde g, 1]$ is $\frac14$-polarized and $\mx(1) = \one$, for every $i\in B$, we have
$\tilde x^\mcp_i(\tilde g)\leq 1$ by \eqref{approx-B} and $\bar x_i^{(t)} \leq 2$ by \eqref{mcp_approx_feas}.
Hence, $|z_i| \leq 4n$ for all $i\in B$ by \eqref{mcp_approx} and
\[\left\| \Delta x_N + \bar x^{(t)}_{N} \right\| \leq  \frac{8n^{3/2}}{\tilde \sigma_k} + 16n^{3/2}\tilde g \leq  \frac{32n^{5/2}}{\sigma_k} + 16n^{3/2}\tilde g \leq 48 n^{5/2}\hat g,\]
where the second inequality is by \eqref{eq:singular_values}.
\end{proof}

By \Cref{lem:proj_error}, if $\|\Delta x_{\tilde N} + \bar x^{(t)}_{\tilde N}\|\geq 48n^{7/2}\lambda^{(t)}/f_1(n)$, then $\lambda^{(t)}\leq f_1(n)\hat g/n$.
Combining this with \eqref{gap_approx} yields $g^{(t)}\leq f_1(n) \hat g$.
In this case, we can cross $\hat g$ in  $\lceil n\log(f_1(n))\rceil = O(n^2\log n)$ iterations of \textsc{Ratio-Circuit}$(A,c,\zero,1/x)$.
Henceforth, we assume that
\begin{equation}\label{eq:proj_error}
    \left\|\Delta x_{\tilde N} + \bar x^{(t)}_{\tilde N} \right\|< \frac{48n^{7/2}\lambda^{(t)}}{f_1(n)}.
\end{equation}

The next lemma bounds the norm of $\Delta x_B$.

\begin{lemma}\label{lem:lifting_cost}
If $g^{(t)}\in [\tilde g, 1]$ and \eqref{eq:right_distance} holds, then $\|\Delta x_B\|\leq 1/(16n^2)$.
\end{lemma}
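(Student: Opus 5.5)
The plan is to bound $\Delta x_B$ through the operator norm of $\ell^{\tilde W}_{\tilde N}$ restricted to the singular subspace $V_d$, and then convert back to the unscaled singular values $\sigma$ that appear in \eqref{eq:right_distance}. Since $g^{(t)}\in[\tilde g,1]$, \Cref{guess_partition} gives $\tilde B=B$ and $\tilde N=N$. We keep the normalization $g^{(t_0)}=1$ and $\mx(g^{(t_0)})=\one$. Set
\[u:=-\bar x^{(t)}_N/\hat x^{(t_0)}_N .\]
By definition of $\Delta x$,
\[\|\Delta x_B\| \le \|\hat x^{(t_0)}_B\|_\infty\,\bigl\|\ell^{\tilde W}_{N}(\Pi_{V_d}(u))\bigr\| \le 2\,\bigl\|\ell^{\tilde W}_{N}(\Pi_{V_d}(u))\bigr\|,\]
where the second inequality uses \eqref{mcp_approx}. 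Because $V_d$ is a singular subspace for $\ell^{\tilde W}_N$ of dimension $\dim(\pi_N(W))-k$, the min-max characterization \eqref{sv_minmax} gives
\[\|\ell^{\tilde W}_N(\Pi_{V_d}(u))\|\le\tilde\sigma_{k+1}\|\Pi_{V_d}(u)\|\le\tilde\sigma_{k+1}\|u\|.\]

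Next we bound $\|u\|$ coordinatewise. For $i\in N$, \eqref{mcp_approx_feas} gives $\bar x^{(t)}_i\le 2\mx_i(g^{(t)})$. Since $[\tilde g,1]$ is $\frac14$-polarized with $\mx(1)=\one$, we have $\mx_i(g^{(t)})\le 4g^{(t)}$. By \eqref{mcp_approx}, $\hat x^{(t_0)}_i\ge 1/(2n)$. Together these give $|u_i|\le 16ng^{(t)}$, hence $\|u\|\le 16n^{3/2}g^{(t)}$. To pass from $\tilde\sigma_{k+1}$ to $\sigma_{k+1}$, we use \eqref{eq:singular_values} (that is, \Cref{lemma:singular-value-scaling} with $y=\hat x^{(t_0)}$), which gives $\tilde\sigma_{k+1}\le 4n\sigma_{k+1}$. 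Finally, \eqref{eq:right_distance} lets us replace $\sigma_{k+1}g^{(t)}$ by $1/(512n^{7/2})$.

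The main obstacle is the constant. Multiplied naively, the chain above gives $\|\Delta x_B\|\le 128n^{5/2}\sigma_{k+1}g^{(t)}\le 1/(4n)$. That falls short of the claimed $1/(16n^2)$ by a factor of $4n$, so at least one of the three lossy steps must be tightened without changing the hypothesis. The first place to try is the scaling step. Instead of comparing whole spectra via \eqref{eq:singular_values}, compare the actual vectors: $\Delta x$ lies in $W$, and its $B$-part is controlled by the unscaled operator $\ell^W_N$, because rescaling by $\hat x^{(t_0)}\in[1/(2n),2]$ only changes the norm of the minimal lift in a one-sided way on $B$. The aim is to avoid paying both the $\|\hat x_B\|_\infty$ and the $\|\hat x_N^{-1}\|_\infty$ factors in full. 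The second place to try is the bound on $\|u\|$. Here one could use that $\hat x^{(t_0)}$ and $\bar x^{(t)}$ come from the same approximation scheme, so the ratio $\bar x^{(t)}_i/\hat x^{(t_0)}_i$ should be bounded directly by roughly $g^{(t)}$ times a smaller power of $n$, rather than through separate worst-case bounds on the numerator and the denominator. Once the product of these constants is at most $32n^{3/2}$, \eqref{eq:right_distance} yields $\|\Delta x_B\|\le 1/(16n^2)$.
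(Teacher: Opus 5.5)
\textbf{Gap: the proposal stops at $1/(4n)$.} Your estimates are all correct, but you do not prove the lemma. Your chain ends at
$\|\Delta x_B\|\le 32n^{3/2}\tilde\sigma_{k+1}g^{(t)}\le 128n^{5/2}\sigma_{k+1}g^{(t)}\le 1/(4n)$.
That is too weak for how the lemma is used. In the next lemma one needs $\bar x^{(t)}_j+\alpha\Delta x_j>0$ for all $j\in B$, with $\alpha<2$ and only $\bar x^{(t)}_j\ge 1/(8n^2)$ available.

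Neither of your two repairs closes the factor $4n$ as described.
\begin{itemize}
\item The vector comparison gives the wrong direction. Since $\Delta x\in W$ is a lift of $\Delta x_N$, comparing with the unscaled minimal lift only yields $\|\ell^W_N(\Delta x_N)\|\le\|\Delta x_B\|$, which is a lower bound. The reverse inequality costs $\|\hat x^{(t_0)}_B\|_\infty\|1/\hat x^{(t_0)}_B\|_\infty$, which can be as large as $4n$. You would also still need $\|\ell^W_N(\Delta x_N)\|\lesssim\sigma_{k+1}\|\Delta x_N\|$. There is no reason for this: $\Delta x_N$ lies in $\mathrm{diag}(\hat x^{(t_0)}_N)V_d$, and $V_d$ is a singular subspace of the rescaled operator $\ell^{\tilde W}_N$, not of $\ell^W_N$. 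The only control you have on $V_d$ is through $\tilde\sigma_{k+1}$, which puts you back at \eqref{eq:singular_values}.
\item The ratio bound has nothing to exploit. $\hat x^{(t_0)}$ and $\bar x^{(t)}$ come from separate oracle calls at different iterates. All you know about them is \eqref{mcp_approx} and \eqref{mcp_approx_feas}, and nothing correlates their errors. So $|u_i|\le 16ng^{(t)}$ cannot be sharpened from the available information.
\end{itemize}

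\textbf{Comparison with the paper, and the fix.} The paper's proof is exactly your chain, with the same constants. It derives $\|\Delta x_B/\hat x^{(t_0)}_B\|\le\tilde\sigma_{k+1}\cdot 16n^{3/2}g^{(t)}$ and then invokes \eqref{eq:right_distance} to bound this by $1/(32n^2)$. That step uses the hypothesis as if it read $g^{(t)}\le 1/(512n^{7/2}\tilde\sigma_{k+1})$; the conversion $\tilde\sigma_{k+1}\le 4n\sigma_{k+1}$ that you carried out never appears. So the factor $4n$ you found is a real mismatch between how \eqref{eq:right_distance} is stated and how it is used. The fix belongs in the hypothesis, not in the estimates.
\begin{itemize}
\item Since $\tilde\sigma$ is what the algorithm computes, you can require $g^{(t)}\le 1/(512n^{7/2}\tilde\sigma_{k+1})$ directly. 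By the choice of $k$, $g^{(t)}\le n\lambda^{(t)}\le 4n^3\lambda^{(t_0)}/\tilde\sigma_{k+1}\le 4n^3/\tilde\sigma_{k+1}$. So $O(n\log n)$ further Wallacher steps (\Cref{iterated-geometric-progress}) achieve it.
\item Equivalently, keep $\sigma_{k+1}$ in \eqref{eq:right_distance} but replace $512n^{7/2}$ by $2048n^{9/2}$.
\end{itemize}
Under either version, your first paragraph already gives $\|\Delta x_B\|\le 32n^{3/2}\tilde\sigma_{k+1}g^{(t)}\le 1/(16n^2)$. The extra steps do not change the $O(n^2\log n)$ cost per interval.
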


\begin{proof}
Since $g^{(t)}\in [\tilde g, 1]$, we have $\tilde B = B$ and $\tilde N = N$.
As $\Delta x_B/\hat x^{(t_0)}_B = \ell^{\tilde W}_N(\Delta x_N/\hat x^{(t_0)}_N)$ and $\Delta x_N/\hat x^{(t_0)}_N\in V_d$, we obtain
\begin{align*}
    \left\|\Delta x_B/\hat x^{(t_0)}_B\right\| &\leq \tilde \sigma_{k+1} \left\|\Delta x_N/\hat x^{(t_0)}_N\right\| \\ 
    &\leq \tilde \sigma_{k+1} \left\|\bar x^{(t)}_{N}/\hat x^{(t_0)}_N \right\| \tag{$\Delta x_N/\hat x^{(t_0)}_N = \Pi_{V_d}(-\bar x^{(t)}_N/\hat x^{(t_0)}_N)$}\\
    &\leq \tilde \sigma_{k+1} \left\|2n\bar x^{(t)}_{N} \right\|  \tag{by \eqref{mcp_approx} and $\mx(g^{(t_0)}) = \one$} \\
    &\leq \tilde \sigma_{k+1} \left\|4n\mx_N(g^{(t)}) \right\| \tag{by \eqref{mcp_approx_feas}}\\
    &\leq \tilde \sigma_{k+1}\cdot 16n^{3/2}g^{(t)} \tag{$[\tilde g, 1]$ is $\frac14$-polarized} \\
    &\leq \frac{1}{32n^2} \tag{by \eqref{eq:right_distance}}.
\end{align*}
By \eqref{mcp_approx} and $\mx(g^{(t_0)}) = \one$ again, we get
\[\left\|\Delta x_B\right\| \leq \left\|\hat x^{(t_0)}_B\right\|_\infty \left\|\Delta x_B/\hat x^{(t_0)}_B\right\| \leq \frac{2}{32n^2} = \frac{1}{16n^2} \qedhere\]
\end{proof}

Let $\alpha>0$ be the maximal step size such that
\[x':=\bar x^{(t)} + \alpha  \Delta x\] 
satisfies $x'_i \geq 0$ for all $i\in \tilde B$ and $x'_i\geq 8n^2 \hat x^{(t_0)}_i/\tilde \sigma_k$ for all $i\in \tilde N$.
We claim that $x'$ is the desired point $y^{(0)}$.
First, observe that for every $i\in N$, we have
\[x'_i\geq \frac{8n^2 \hat x^{(t_0)}_i}{\tilde \sigma_k} \geq \frac{4n\mx_i(g^{(t_0)})}{\tilde \sigma_k} = \frac{4n}{\tilde \sigma_k} \geq  \frac{1}{\sigma_k}.\]
The second and last inequalities follow from \eqref{mcp_approx} and \eqref{eq:singular_values} respectively, while the equality is due to $\mx(g^{(t_0)}) = \one$.
So, it is left to show that the optimality gap of $x'$ is at most $f_6(n)\cdot \hat g$.

\begin{lemma}
If $g^{(t)}\in [\tilde g, 1]$ and \eqref{eq:right_distance},\eqref{eq:proj_error} hold, then $g(x')\leq f_6(n) \cdot \hat g$.
\end{lemma}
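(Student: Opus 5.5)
The plan is to bound $g(x')$ with \Cref{gap-on-N}, which gives $g(x')\le 8\tilde g\|x'_B\|_1+2\|x'_N\|_1$. So it suffices to show that $x'_B=O(1)$ entrywise and $\|x'_N\|_1=O(\mathrm{poly}(n)\,\hat g)$. Throughout I assume $g^{(t)}\in[\tilde g,1]$, so that $\tilde B=B$, $\tilde N=N$ by \Cref{guess_partition}. I also use the normalization $\mx(g^{(t_0)})=\one$, so that $x^\mcp_i(g^{(t)})\in[g^{(t)},4g^{(t)}]$ for $i\in N$ and $x^\mcp_i(g^{(t)})\in[1/4,1]$ for $i\in B$. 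Feasibility of $x'$ itself is immediate: $\Delta x\in W$, and $x'\ge\zero$ by the constraints defining $\alpha$. Write $e:=\Delta x_N+\bar x^{(t)}_N$ for the projection error. Then $x'_N=(1-\alpha)\bar x^{(t)}_N+\alpha e$, with $\|e\|\le 48n^{5/2}\hat g$ by \Cref{lem:proj_error} and $\|e\|<48n^{7/2}\lambda^{(t)}/f_1(n)$ by \eqref{eq:proj_error}.

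\emph{Step 1: $\alpha\le 2$.} By \eqref{mcp_approx_feas}, $\bar x^{(t)}_i\ge g^{(t)}/(2n^2)$ for $i\in N$, while $|e_i|<48n^{7/2}g^{(t)}/f_1(n)\ll g^{(t)}/(4n^2)$. Hence $\Delta x_i\le-\bar x^{(t)}_i/2$ for every $i\in N$. If $\alpha$ exceeded $2$, some (indeed every) coordinate $x'_i$, $i\in N$, would become negative, which violates the threshold constraint. Therefore $\alpha\le 2$.

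\emph{Step 2: the $B$ part, and showing the stopping coordinate lies in $N$.} By \Cref{lem:lifting_cost}, $\alpha|\Delta x_i|\le 2/(16n^2)$ for $i\in B$. Combined with $\bar x^{(t)}_i\le 2$, this gives $x'_i\le 3$, so $8\tilde g\|x'_B\|_1\le 24n\hat g$. On the other hand, $\bar x^{(t)}_i\ge 1/(8n^2)$ for $i\in B$. So the step cannot be halted by a coordinate of $B$ hitting zero, except possibly in a borderline equality case, which I would remove by a slightly sharper constant in \Cref{lem:lifting_cost}. Thus the binding constraint is $x'_{i^*}=8n^2\hat x^{(t_0)}_{i^*}/\tilde\sigma_k$ for some $i^*\in N$. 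The exception is when no constraint binds before the $N$-coordinates themselves force the stop, which is covered by the case $\alpha\ge1$ below. Since $\hat x^{(t_0)}\le 2$ and $1/\tilde\sigma_k\le 4n/\sigma_k\le 4n\hat g$ by \eqref{eq:singular_values}, this threshold is at most $64n^3\hat g$. If $k=0$, the threshold is $0$ and the argument only simplifies.

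\emph{Step 3: the $N$ part, by cases on $\alpha$.} If $\alpha\ge1$, then $x'_j\le\alpha|e_j|\le 2|e_j|$ for every $j\in N$. If $\alpha<1$, the binding coordinate $i^*$ gives $(1-\alpha)\bar x^{(t)}_{i^*}\le 64n^3\hat g+|e_{i^*}|$. Polarization makes all $\bar x^{(t)}_j$, $j\in N$, comparable: each lies in $[g^{(t)}/(2n^2),8g^{(t)}]$, so any two differ by a factor at most $16n^2$. This transfers the bound to every $j\in N$: $(1-\alpha)\bar x^{(t)}_j\le 16n^2(64n^3\hat g+|e_{i^*}|)$, and hence $x'_j\le 16n^2(64n^3\hat g+\|e\|)+\|e\|$. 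In both cases, using $\|e\|\le 48n^{5/2}\hat g$, I get $\|x'_N\|_1\le n\cdot O(n^5)\hat g$. Together with the $B$ bound, this gives $g(x')=O(n^6)\hat g$, and the constants should be checked to fit within $f_6(n)=3785n^6$.

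The main obstacle I anticipate is the case $\alpha<1$. The step is stopped by a single coordinate of $N$, so there is no direct control on the others. The key idea is the uniform comparability of $\bar x^{(t)}_N$ coming from polarization, and the constant bookkeeping there is what must land under $f_6$.
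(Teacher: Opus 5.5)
Your proof is correct and takes essentially the same route as the paper: you bound $\alpha<2$ via \eqref{eq:proj_error}, use \Cref{lem:lifting_cost} to show the stopping coordinate $i^*$ lies in $N$, transfer the bound at $i^*$ to all of $N$ through the factor-$16n^2$ comparability of $\bar x^{(t)}_N$, and finish with \Cref{gap-on-N}; the paper avoids your case split by writing $1-\alpha\le (48n^{5/2}\hat g+64n^3/\sigma_k)/(\bar x^{(t)}_{i^*}+48n^{5/2}\hat g)$, which covers both signs of $1-\alpha$. Your borderline worry in Step 2 is unnecessary, because Step 1 already gives $\alpha<2$ strictly (since $|e_i|<\bar x^{(t)}_i/2$ strictly), and when $\alpha<1$ the $B$-coordinates stay above $1/(16n^2)$ anyway; your constants total at most $3704n^6\hat g\le f_6(n)\hat g$.
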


\begin{proof}
For every $i\in N$, we have
\[\bar x^{(t)}_i + \Delta x_i \leq \left\| \bar x^{(t)}_N + \Delta x_N \right\| < \frac{48n^{7/2}\lambda^{(t)}}{f_1(n)} \leq \frac{48n^{7/2}g^{(t)}}{f_1(n)} \leq \frac{48n^{7/2}\mx_i(g^{(t)})}{f_1(n)} \leq \frac{96n^{11/2}\bar x^{(t)}_i}{f_1(n)}.\]
The second inequality is by \eqref{eq:proj_error}, the third inequality is by \eqref{gap_approx}, the fourth inequality is due to \Cref{lem:mcp_bounds} and $\mx(1) = \one$, and the last inequality follows from \eqref{mcp_approx_feas}.
Hence, 
\begin{equation}\label{eq:step_size}
    \alpha \leq \frac{\bar x^{(t)}_i}{-\Delta x_i}\leq \left(1-\frac{96n^{11/2}}{f_1(n)}\right)^{-1} < 2.
\end{equation}
Since $\bar x^{(t)}_j \geq 1/(8n^2)$ for all $j\in B$ by \eqref{mcp_approx_feas} and the fact that $[\tilde g, 1]$ is $\frac14$-polarized, it follows from \Cref{lem:lifting_cost} that the step size is limited by some coordinate $i^*\in N$.

From the definition of $x'$, we obtain
\begin{align*}
\frac{64n^3}{\sigma_k} \geq \frac{16n^2}{\tilde \sigma_k} = \frac{16n^2\mx_{i^*}(g^{(t_0)})}{\tilde \sigma_k} &\geq \frac{8n^2\hat x^{(t_0)}_{i^*}}{\tilde \sigma_k} \tag{by \eqref{eq:singular_values} and \eqref{mcp_approx}}\\
&= x'_{i^*} \\
&= (1-\alpha)\bar x^{(t)}_{i^*} + \alpha (\bar x^{(t)}_{i^*} + \Delta x_{i^*}) \\ 
&\geq  (1-\alpha)\bar x^{(t)}_{i^*} - 48\alpha n^{5/2}\hat g, \tag{by \Cref{lem:proj_error}}
\end{align*}
which implies that $\alpha \geq (\bar x^{(t)}_{i^*}-64n^3/\sigma_k)/(\bar x^{(t)}_{i^*} + 48 n^{5/2}\hat g)$.
Hence, for every $i\in N$, we have
\begin{align*}
    x'_i &= (1-\alpha)\bar x^{(t)}_{i} + \alpha (\bar x^{(t)}_{i} + \Delta x_{i}) \\ 
    &\leq \frac{48 n^{5/2}\hat g + 64n^3/\sigma_k}{\bar x^{(t)}_{i^*} + 48 n^{5/2}\hat g} \cdot \bar x^{(t)}_i + 96 n^{5/2}\hat g  \tag{by \eqref{eq:step_size} and \Cref{lem:proj_error}} \\
    &\leq \frac{112 n^3\hat g}{\bar x^{(t)}_{i^*} } \cdot \bar x^{(t)}_i + 96 n^{5/2}\hat g\\
    &\leq  \frac{112 n^3\hat g}{\mx_{i^*}(g^{(t)})/(2n^2)} \cdot 2\mx_{i}(g^{(t)}) + 96 n^{5/2}\hat g  \tag{by \eqref{mcp_approx_feas}} \\
    &\leq  \frac{112 n^3\hat g}{g^{(t)}/(2n^2)} \cdot 8g^{(t)} + 96 n^{5/2}\hat g  \tag{$[\tilde g, 1]$ is $\frac14$-polarized and $\mx(1) = \one$} \\ 
    &\leq 1888n^5 \hat g
\end{align*}

On the other hand, for every $j\in B$, we have $x'_j= \bar x^{(t)}_j + \alpha \Delta x_j < 1 + 2/(16n^2) \leq 9/8$ by \Cref{lem:lifting_cost}.
Therefore, \Cref{gap-on-N} yields
\[g(x') \leq 8\tilde g\|x'_B\|_1 + 2\|x'_N\|_1 \leq 9n \tilde g + 3776n^6\hat g \leq 3785n^6\hat g = f_6(n)\cdot \hat g. \qedhere \]
\end{proof}

\subsection{Putting Everything Together}
\label{sec:finale}

Fix an iterate $x^{(t_0)}$ and let $[g_j, g_{j+1}]$ be the polarized interval containing $g^{(t_0)}$.
Let $x^{(t)}$ be an iterate after more than $\lceil 4n\log(4n)\rceil$ calls to \textsc{Ratio-Circuit}$(A,c,\zero,1/x)$, starting from $x^{(t_0)}$.
We have shown how to compute a guess $(\tilde B, \tilde N)$ of the polarized partition for $[g_j, g_{j+1}]$.
This guess is correct if $g^{(t)}\in [g_j, g_{j+1}]$.
We have also shown how to compute the points $\bar x$ and $y^{(0)}$, and they satisfy Properties\ref{cond:x_gap}, \ref{cond:x_lbound}, \ref{cond:y_gap}, \ref{cond:y_lbound} if $g^{(t)}\in [g_j, g_{j+1}]$ and \eqref{eq:right_distance}, \eqref{eq:proj_error} hold. 
However, it is unclear how to check if $g^{(t)}\in [g_j, g_{j+1}]$ because we don't have access to the polarized decomposition of $\mx$.
For the same reason, we don't know $\hat g$, which is needed by \textsc{Long-Steps}.

Fortunately, we can afford to be oblivious because the optimal value of \eqref{sys:long_steps_lp} is always nonpositive.
In particular, we alternate between $\Theta(n^2\log n)$ `short' steps given by Wallacher's rule, and $n$ `long' steps given by \eqref{sys:long_steps_lp} (with increasing values of $p$).
If $g^{(t)}\in [g_j, g_{j+1}]$ and \eqref{eq:right_distance}, \eqref{eq:proj_error} hold, then by \Cref{scaling-up}, we will achieve $g^{(t+p)}\leq f_1(n)\cdot \hat g$ in $p\leq n$ long steps.
Otherwise, these steps do not increase our optimality gap.

A complete pseudocode is provided in Algorithm~\ref{alg:implementation}.
Instead of using \textsc{Long-Steps}, it uses \textsc{Long-Steps-Forced} (Algorithm~\ref{alg:long-steps-forced}).
The difference between these two subroutines is that the latter does not check whether $f_1(n)\cdot \hat g< g(x)$ at the start of every iteration.
It just runs $n$ iterations blindly, unless $y^{(p+1)}$ does not exist.

\begin{algorithm}[!h]\label{alg:long-steps-forced}
\caption{\textsc{Long-Steps-Forced}}
\SetKwInOut{Input}{Input}
\SetKwInOut{Output}{Output}
\Input{Feasible solution $x\in P$, points $\bar x, y^{(0)}\in P$, subset of coordinates $\tilde N\subseteq [n]$}
\Output{Feasible solution $x\in P$}
Let $L$ be the ray containing $[y^{(0)}, \bar x]$, where $L(g)$ is the point on $L$ with optimality gap $g\geq 0$\;
\For{$p=0$ \KwTo $n-1$}{\label{pc:inner-while}
    $S_p \gets \{i\in \tilde N: x_i \leq 3ny^{(p)}_i\}$\;
    $y^{(p+1)} \gets L(g)$ for $g\geq 0$ minimal such that $L_{\tilde N}(g)\geq 4ny^{(p)}_{\tilde N}$\;
    \If{$y^{(p+1)}$ does not exist}{
        break\;
    }
    Compute elementary vector $z\in\elementary{A}$ as a solution to~\eqref{sys:long_steps_lp}\;
    $x\gets\aug_P(x,z)$\;
}

\Return $x$\;
\end{algorithm}

\begin{algorithm}[!h]\label{alg:implementation}
\caption{Circuit augmentation algorithm}
\SetKwInOut{Input}{Input}
\SetKwInOut{Output}{Output}
\Input{Bounded instance of \eqref{lp} with constraint matrix $A\in\R^{m\times n}$, feasible polyhedron $P\subseteq\R^n$, feasible solution $x^{(0)}\in P$}
\Output{Optimal solution $x^*$}
$x\gets x^{(0)}$\;
\While{$x$ is not optimal}{\label{alg2:main-while}
    $(z^{(0)},s^{(0)},\lambda^{(0)}) \gets \textsc{Ratio-Circuit}(A,c,\zero,1/x)$\;
    \For(\tcc*[f]{Approximate max central path}){$i=1$ \KwTo $n$}{
        $(z',s',\lambda') \gets \textsc{Ratio-Circuit}(A,-e_i,s^{(0)}/\lambda^{(0)},2/x-s^{(0)}/\lambda^{(0)})$\;
        $\hat x^{(0)}_i\gets x_i + z'_i$\;
    }
    \For(\tcc*[f]{Short steps}){$t=1$ \KwTo $\lceil n\log(4f_1(n)) \rceil$}{\label{alg2:slow-for}
    $(z,s,\lambda) \gets \textsc{Ratio-Circuit}(A,c,\zero,1/x)$\;
    $x\gets\aug_P(x,z)$\;}
    $(z^{(1)},s^{(1)},\lambda^{(1)}) \gets \textsc{Ratio-Circuit}(A,c,\zero,1/x)$\;\label{alg2:basepoint}
    \For(\tcc*[f]{Approximate max central path}){$i=1$ \KwTo $n$}{
        $(z',s',\lambda') \gets \textsc{Ratio-Circuit}(A,-e_i,s^{(1)}/\lambda^{(1)},2/x-s^{(1)}/\lambda^{(1)})$\;
        $\hat x^{(1)}_i\gets x_i + z'_i$\; $x^{(1,i)}\gets x + z'$\;
    }
    $\tilde B\gets\{i\in[n]:1/(16n)\le \hat x^{(1)}_i/\hat x_i^{(0)}\le4n\}$ \tcc*[r]{Guess polarized partition} 
    $\tilde N\gets\{i\in[n]:\lambda^{(1)}/(4n^2\lambda^{(0)})\le \hat x^{(1)}_i/\hat x_i^{(0)}\le 16n^2\lambda^{(1)}/\lambda^{(0)}\}$\;
    \If{$\tilde B$ and $\tilde N$ partition $[n]$}{
        Compute the lifting operator $\ell_{\tilde N}^{\tilde W}$ where $\tilde W :=\mathrm{diag}(\hat x^{(0)})^{-1}W$ and $W:=\ker(A)$\;\label{alg2:lifting}
        Compute singular values $\tilde \sigma = \sigma(\ell_{\tilde N}^{\tilde W})$\;
        $k\gets\max\{i:4n^2\lambda^{(0)}/\tilde \sigma_i<\lambda^{(1)}\}$\label{alg2:k} \tcc*[r]{Set $k=0$ if no such $i$ exists}
        \If{$k<\dim(\pi_{\tilde N}(W))$}{
        $\bar x\gets\sum_{i=1}^n x^{(1,i)}/n$\;
        Compute singular subspace $V_d$ for $\ell_{\tilde N}^{\tilde W}$ of dimension $d:=\dim(\pi_{\tilde N}(W))-k$\;
        $\Delta x\gets\hat x^{(0)} \circ \left(\ell^{\tilde W}_{\tilde N}\bigl(\Pi_{V_d}(-\bar x_{\tilde N}/\hat x_{\tilde N}^{(0)})\bigr),\Pi_{V_d}(-\bar x_{\tilde N}/\hat x_{\tilde N}^{(0)})\right)$\;
        $y^{(0)}\gets\bar x+\alpha\Delta x$ for $\alpha\geq0$ maximal such that $y^{(0)}_{\tilde B}\ge \zero$ and $y^{(0)}_{\tilde N}\ge8n^2\hat x^{(0)}_{\tilde N}/\tilde \sigma_k$\;
        \If{$y^{(0)}$ exists}{
            $x\gets \textsc{Long-Steps-Forced}(x,\bar x, y^{(0)}, \tilde N)$
        }
        }
    } 
}
\Return $x$\;
\end{algorithm}

\section{An Improved Amortized Iteration Bound}\label{sec:amortization}
In this section, we show that the analysis of the number of iterations in the proof of \Cref{thm:main_weak} is not tight, and we can improve upon it by a factor of $n$.
A similar observation has been used by Allamigeon et al.~\cite{ADL25} to improve the iteration bound of their interior point method.
We remark that an adaption of their proof strategy would imply a similar result for our case, but we choose to provide a shorter new analysis.
The main idea relies on the notion of an \emph{ideal potential function} based on a carefully scaled lifting operator, that scales according to the gap of the iterate. 
With the help of this function, we can show that we do not need to deal with the full range of singular values within each polarized interval.
Further, the singular value decomposition that we use within Algorithm~\ref{alg:non-det} approximates the one used in the ideal potential function, which ensures that it is valid to measure our progress in this way.

We begin by giving a stronger version of \Cref{lem:polarization}, that is, \Cref{thm:polarized-approx}.
In addition to the polarized decomposition of the max central path, we obtain a polynomial factor approximation of the max central path that respects the polarized decomposition and has strong polarization properties itself.
We note that our notion of polarization is defined with respect to the max central path, as opposed to~\cite{ADL25}.
The statement of \Cref{thm:polarized-approx} can be obtained as an implicit corollary of~\cite[Theorem~1.8, Lemma~4.5]{ADL25}, by observing that both proofs use the same polarized decomposition.
We remark that our indexing of the partitions $(B^{(j)},N^{(j)})$ is shifted by one, for consistency with the remainder of our paper. 
Further, we omit one term on the left-hand side of the inequality of \Cref{thm:polarized-approx}, part~\ref{item-bound}, making the claimed bound weaker.

\begin{theorem}[{\cite[Theorem~1.8, Lemma~4.5]{ADL25}}]\label{thm:polarized-approx}
    Let $x\in P$ with optimality gap $g$ and $\eta\in (0,1]$.
    There exist points $0 = g_0 < g_1 < \dots < g_r = g$ with $r \leq 2\sum_{i\in [n]}\slc_{\eta}(\mx_i,g)$, partitions $(B^{(j)},N^{(j)})$ of $[n]$ for each $j\in\{0,\dots,r-1\}$, and piecewise linear functions $h_i:[0,g_r]\to\R_{\ge0}$ for each $i\in[n]$, such that the following conditions hold:
    \begin{enumerate}
        \item each interval $[g_{j}, g_{j+1}]$ is $(\eta/2)$-polarized with associated polarized partition $(B^{(j)},N^{(j)})$ for each $j\in\{1,\dots,r\}$,
        \item $h_i$ is constant whenever $i\in B^{(j)}$, i.e., $h_i(\mu)=h_i(g_{j})$ for all $\mu\in[g_{j},g_{j+1}]$ and $i\in B^{(j)}$,\label{item-B} 
        \item $h_i$ is scaling whenever $i\in N^{(j)}$, i.e., $h_i(\mu)=h_i(g_{j})\cdot \mu/g_{j}$ for all $\mu\in[g_{j},g_{j+1}]$ and $i\in N^{(j)}$,\label{item-N}
        \item $h_i$ is an $\eta/2$-approximation of $x_i^\mcp$, i.e., $x^\mcp_i(\mu)\ge h_i(\mu)\ge\frac\eta2x_i^\mcp(\mu)$ for all $\mu\in[0,g_r]$,\label{item-approx}
        \item $\sum_{i\in[r-1]}|N^{(i-1)}\triangle N^{(i)}|\le 2\sum_{i\in [n]}\slc_{\eta}(\mx_i,g_r)$.\label{item-bound}
    \end{enumerate}  
\end{theorem}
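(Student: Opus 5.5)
The plan is to build the polarized decomposition and the approximating functions $h_i$ together from near-optimal piecewise linear approximants of the coordinates of $\mx$, as in the proof of \Cref{lem:polarization}. Part~\ref{item-approx} would then hold by construction, and the remaining parts would come from where the breakpoints are placed. For each $i\in[n]$ I fix a continuous piecewise linear $\psi_i$ on $[0,g]$ with $\eta\mx_i\le\psi_i\le\mx_i$ and $\slc_\eta(\mx_i,g)$ pieces. Then I replace $\psi_i$ by an approximant $h_i$ whose pieces are all either constant or linear through the origin (``scaling''), keeping the number of pieces within a factor of $2$.

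The replacement rests on the following fact. On a piece $[a,b]$ where $\psi_i(\mu)=\alpha+\beta\mu$ with $\alpha,\beta\ge0$ (nonnegativity of both coefficients follows from $\psi_i\le\mx_i$ together with concavity and monotonicity of $\mx_i$, after a standard modification), the function $\max\{\alpha,\beta\mu\}$ satisfies
\[\tfrac12\psi_i\le\max\{\alpha,\beta\mu\}\le\psi_i.\]
So each piece splits into at most two pieces, one constant and one scaling, and the resulting function is a $\frac{\eta}{2}$-approximation of $\mx_i$. This gives part~\ref{item-approx}, with continuity to be checked at the junctions.

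Next, I take the breakpoints $g_0<\dots<g_r$ to be the union over $i$ of the breakpoints of all the $h_i$. This gives $r\le 2\sum_i\slc_\eta(\mx_i,g)$. On each interval $[g_j,g_{j+1}]$, every $h_i$ is either constant or scaling. I set $B^{(j)}$ to be the constant coordinates and $N^{(j)}$ the scaling ones, which yields parts~\ref{item-B} and~\ref{item-N} directly. Polarization (part~(i)) then follows because $h_i$ is sandwiched between $\frac{\eta}{2}\mx_i$ and $\mx_i$. For $i\in B^{(j)}$ this gives $\mx_i(\mu)\ge h_i(\mu)=h_i(g_{j+1})\ge\frac\eta2\mx_i(g_{j+1})$. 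For $i\in N^{(j)}$ it gives $\mx_i(\mu)\le \frac2\eta h_i(\mu)=\frac2\eta\frac{\mu}{g_{j+1}}h_i(g_{j+1})\le\frac{2\mu}{\eta g_{j+1}}\mx_i(g_{j+1})$. The other two inequalities hold by \Cref{lem:mcp_bounds}.

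For part~\ref{item-bound}, a coordinate $i$ changes membership between $N^{(j-1)}$ and $N^{(j)}$ only at a breakpoint $g_j$ where $h_i$ switches from constant to scaling or back. Each such switch is a breakpoint of $h_i$, so the total symmetric difference is at most the total number of pieces of all the $h_i$, which is at most $2\sum_i\slc_\eta(\mx_i,g)$.

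The main obstacle is making the replacement of $\psi_i$ by constant/scaling pieces continuous while keeping the piece count within a factor $2$. The maximum $\max\{\alpha,\beta\mu\}$ is continuous on each piece, but consecutive pieces may disagree at their junctions. My first attempt would be to argue, as in \cite[Lemma~4.5]{ADL25}, that one can instead work directly with the upper envelope of $\mx_i$ restricted to constant and scaling functions, e.g. by taking $h_i(\mu)=\max\{\alpha_k,\beta_k\mu\}$ over the pieces $k$ active at $\mu$ after a monotone re-parametrization. Failing that, I would cite \cite[Theorem~1.8]{ADL25}, which provides exactly such a decomposition.
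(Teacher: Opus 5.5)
Your overall plan is sound. The paper gives no proof of its own here and imports the statement from \cite{ADL25}. Approximating by constant/scaling pieces with at most twice as many pieces, and taking all breakpoints as the $g_j$, is the natural way to reconstruct it. Once suitable $h_i$ exist, your arguments for the bound on $r$ and for parts (i), (ii), (iii) and (v) are correct. The genuine gap is the construction of the $h_i$ themselves, which you leave open.

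Replacing each piece of $\psi_i$ \emph{separately} by $\max\{\alpha,\beta\mu\}$ really does fail. Take $\psi(\mu)=\min\{1+\mu,2\}$ on $[0,2]$: at $\mu=1$ the left piece gives $\max\{1,1\}=1$, while the right piece gives $2$. Continuity is not cosmetic here, for three reasons:
\begin{itemize}
\item Parts (ii) and (iii) are imposed on the closed intervals $[g_j,g_{j+1}]$, so $h_i(g_{j+1})$ is pinned down from both sides.
\item Your own estimate $\mx_i(\mu)\ge h_i(\mu)=h_i(g_{j+1})$ for $i\in B^{(j)}$ uses the value at the right endpoint.
\item \Cref{prop:ideal-potential} compares $\ell_{a-1}$ and $\ell_a$ through a single value $h(g_a)$.
\end{itemize}
Neither a vague ``monotone re-parametrization'' nor a citation closes this. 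The ``standard modification'' that makes $\alpha,\beta\ge 0$ is also left unspecified, and it is exactly what the repair needs.

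Here is one way to close the gap.
\begin{itemize}
\item Let $\hat\psi_i$ be the least concave majorant of $\psi_i$ on $[0,g]$, and set $\tilde\psi_i(\mu):=\max_{\nu\le\mu}\hat\psi_i(\nu)$. Since $\mx_i$ is concave and nondecreasing, $\eta\mx_i\le\psi_i\le\tilde\psi_i\le\mx_i$.
\item The vertices of an upper hull are vertices of the graph of $\psi_i$. Hence $\tilde\psi_i$ still has at most $K:=\slc_\eta(\mx_i,g)$ pieces.
\item Being concave and nondecreasing, $\tilde\psi_i(\mu)=\min_{k\le K}(\alpha_k+\beta_k\mu)$, with $0\le\alpha_1\le\dots\le\alpha_K$ and $\beta_1\ge\dots\ge\beta_K\ge 0$. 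Each extended piece lies above $\tilde\psi_i\ge 0$, which gives $\alpha_k\ge 0$.
\item Now take the minimum over \emph{all} pieces rather than only the active one: $h_i(\mu):=\min_k\max\{\alpha_k,\beta_k\mu\}$. This is continuous. Since $\frac12(a+b)\le\max\{a,b\}\le a+b$ for $a,b\ge 0$, it satisfies $\frac12\tilde\psi_i\le h_i\le\tilde\psi_i$, which gives (iv).
\item Let $\mu_k:=\alpha_k/\beta_k$, which is nondecreasing in $k$. Then $h_i=\alpha_1$ on $[0,\mu_1]$, and $h_i(\mu)=\min\{\alpha_k,\beta_{k-1}\mu\}$ on $[\mu_{k-1},\mu_k]$, which is a scaling piece followed by a constant piece. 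Beyond $\mu_K$, $h_i(\mu)=\beta_K\mu$.
\item So $h_i$ alternates between constant and scaling pieces and has at most $2K$ of them.
\end{itemize}
With these $h_i$, the rest of your argument goes through unchanged. To keep the labelling unambiguous, place any $i$ with $\mx_i\equiv 0$ permanently in $B$.
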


The following lemma describes the behavior of the singular values of the lifting operator under changes of the partition.

\begin{lemma}[{\cite[Lemma~8.2]{ADL25}}]\label{lemma:partition-change}
    Let $W\subseteq\R^n$ be a subspace, and consider two partitions $(B,N)$ and $(\hat B,\hat N)$ of $[n]$.
    Then, the lifting operators $\ell:=\ell_N^W$ and $\hat\ell:=\ell_{\hat N}^W$ satisfy
    \begin{equation*}
        \sigma_i(\ell)\ge\sigma_{i+|N\triangle\hat N|}(\hat \ell),\qquad\forall i\ge1.
    \end{equation*}
\end{lemma}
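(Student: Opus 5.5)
The plan is to reduce to the case $|N\triangle\hat N|=1$ and then chain. Any two partitions are connected by a sequence of $|N\triangle\hat N|$ partitions, each obtained from the previous one by moving a single coordinate between the two sides. If every single move shifts the singular-value index by at most one in the stated direction, then iterating gives $\sigma_i(\ell)\ge\sigma_{i+|N\triangle\hat N|}(\hat\ell)$. The convention $\sigma_j:=0$ for $j>\dim$ makes the out-of-range cases trivial. So it suffices to prove, for partitions differing in one coordinate $j$, that $\sigma_i(\ell_N^W)\ge\sigma_{i+1}(\ell_{\hat N}^W)$, in both directions: $\hat N=N\cup\{j\}$ and $\hat N=N\setminus\{j\}$.

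\textbf{Step 1: a partition-independent variational description.} For $w\in W$ consider the Rayleigh-type ratio $\rho_N(w):=\|w_B\|/\|w_N\|$, with value $\infty$ when $w_N=\zero\neq w_B$. Since $\|\ell_N^W(x)\|=\min\{\|w_B\|:w\in W,\,w_N=x\}$, the min-max formula \eqref{sv_minmax} can be rewritten over subspaces of $W$ instead of $\pi_N(W)$. Let $K_N:=W\cap\{w:w_N=\zero\}$. The claim I would establish is: $\sigma_i(\ell_N^W)\ge\alpha$ if and only if there is a subspace $U\subseteq W$ with $\dim U\ge\dim K_N+i$ and $\rho_N(u)\ge\alpha$ for all $u\in U\setminus\{\zero\}$.

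For the ``if'' direction, project $U$ to $\pi_N(U)$. This projection has dimension at least $i$, and each $x\in\pi_N(U)$ has some lift $u\in U$, which gives $\|\ell(x)\|\ge$ \ldots. This is delicate, since the minimal lift may leave $U$. I would instead use the orthogonal decomposition $W=K_N\oplus L_N^W(\pi_N(W))$; on the second summand $\rho_N$ is exactly $\|\ell(x)\|/\|x\|$, and adding $K_N$ only increases $\|w_B\|$. That settles the ``only if'' direction. For ``if'', intersect $U$ with the graph of $L_N^W$ after subtracting $K_N$-components, and use the dimension count. Equivalently, $\sigma_i(\ell_N^W)$ is the $(\dim K_N+i)$-th largest generalized eigenvalue of the pencil $(\|w_B\|^2,\|w_N\|^2)$ on $W$, with $K_N$ contributing the infinite eigenvalues.

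\textbf{Step 2: one-coordinate move.} Suppose $\hat N=N\cup\{j\}$, so $\hat B=B\setminus\{j\}$. Take $U$ witnessing $\sigma_{i+1}(\hat\ell)\ge\alpha$, that is, $\dim U\ge\dim K_{\hat N}+i+1$ and $\rho_{\hat N}\ge\alpha$ on $U$. Set $U':=U\cap\{w_j=0\}$. Then $\dim U'\ge\dim U-1$, and on $U'$ we have $\rho_N=\rho_{\hat N}$. Also $K_{\hat N}\subseteq K_N$, with $\dim K_N\le\dim K_{\hat N}+1$. Combining these inequalities gives the required dimension bound for $U'$ relative to $K_N$, up to the $\pm1$ bookkeeping. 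The reverse move $\hat N=N\setminus\{j\}$ is symmetric. Alternatively, note that moving $j$ into $N$ only decreases $\rho$ pointwise, which gives monotonicity directly in one direction.

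\textbf{Main obstacle.} The hard part is the index bookkeeping: $\dim K_N$ and $\dim\pi_N(W)$ change together with the partition, so one has to check that the shifts in Step 2 add up to at most one, not two. I would first try to sidestep this by working with the full pencil index $m=\dim K_N+i$. In that indexing, a one-coordinate move is a rank-one modification of the two quadratic forms, and Cauchy interlacing gives a shift of at most one in $m$. I would then check separately that translating back from $m$ to $i$ does not lose another unit, using $\dim K_N=\dim W-\dim\pi_N(W)$.
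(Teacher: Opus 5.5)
The paper gives no proof of this lemma; it is quoted from ADL25. So I assess your argument on its own terms. The reduction to single-coordinate moves and the subspace characterization in Step 1 are sound. Step 2, however, does not prove the one-step bound, and it fails exactly at the bookkeeping you postpone.

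Take $\hat N=N\cup\{j\}$ and the target $\sigma_i(\ell)\ge\sigma_{i+1}(\hat\ell)$. The inequalities you list combine only to $\dim U'\ge\dim K_{\hat N}+i\ge\dim K_N+i-1$. That certifies only $\sigma_{i-1}(\ell)\ge\sigma_{i+1}(\hat\ell)$, a shift of two.

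This count is short by one whenever $\dim K_N=\dim K_{\hat N}+1$, i.e.\ whenever some $w\in W$ with $w_N=\zero$ has $w_j\neq 0$. Adjoining such a $w$ to $U'$ does not help, because adding a multiple of it to $u\in U'$ can decrease $\|u_B\|$.

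Calling the reverse move ``symmetric'' hides the real asymmetry. Restricting to $\{w_j=0\}$ works only when the partition whose singular values you are bounding \emph{from below} has the larger $N$.

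Your interlacing fallback has the same hole in another form. When both forms of a pencil receive rank-one updates, the max--min argument alone guarantees only a shift of two. You get a shift of one here only because the same functional $w_j^2$ moves from one form to the other. Hence $\|w_B\|^2+\|w_N\|^2=\|w\|^2$ is invariant, and a single hyperplane $\{w_j=0\}$ suffices.

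The repair uses only ingredients you already have, paired the other way round. Let $\hat N=N\cup\{j\}$ and $\alpha>0$. The case $\alpha=0$ is trivial, and Step 1's equivalence needs $\alpha>0$ anyway when $i>\dim\pi_N(W)$.

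\textbf{(a)} For $\sigma_i(\ell)\ge\sigma_{i+1}(\hat\ell)$, do not restrict. A witness $U$ for $\sigma_{i+1}(\hat\ell)\ge\alpha$ satisfies $\rho_N\ge\rho_{\hat N}\ge\alpha$ pointwise, by your monotonicity remark. Moreover $\dim U\ge\dim K_{\hat N}+i+1\ge\dim K_N+i$, because $K_{\hat N}=K_N\cap\{w_j=0\}$.

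\textbf{(b)} For $\sigma_i(\hat\ell)\ge\sigma_{i+1}(\ell)$, restrict. If $U$ witnesses $\sigma_{i+1}(\ell)\ge\alpha$, then $U':=U\cap\{w_j=0\}$ has $\dim U'\ge\dim K_N+i\ge\dim K_{\hat N}+i$, and $\rho_{\hat N}=\rho_N\ge\alpha$ on $U'$.

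A step of your chain that adds an index to $N$ is case (a), and a step that removes one is case (b). Chaining then gives the lemma.

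The index offset $\dim K_N$ is essential: pointwise monotonicity of $\rho$ does not give $\sigma_i(\ell)\ge\sigma_i(\hat\ell)$. For example, take $W=\mathrm{span}(e_1+Me_2,\,e_2+e_3)\subseteq\R^3$, $N=\{3\}$ and $j=1$. Then $\sigma_1(\ell)=(1+M^2)^{-1/2}<(1+M^2)^{1/2}=\sigma_1(\hat\ell)$.

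Finally, in the ``if'' direction of Step 1, simply intersect $U$ with $L_N^W(\pi_N(W))$. This intersection has dimension at least $\dim U-\dim K_N\ge i$, and on it $\rho_N(L_N^W(x))=\|\ell(x)\|/\|x\|$. Subtracting $K_N$-components instead would lower $\|w_B\|$ and destroy the bound $\rho_N\ge\alpha$.
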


We are ready to define the \emph{ideal potential function}.
Let $g_0,\dots,g_r$ be a polarized partition with associated polarized partitions $(B^{(j)},N^{(j)})$ and polarized approximation $h_i$ of $x^\mcp_i$ for all $i\in [n]$ as in \Cref{thm:polarized-approx}.
Let $g:=g(x)$ be the gap of the current iterate.
If $g>0$, let $a := \max\{i\in \Z:g_i<g\}$, $W_a:=\mathrm{diag}(h(g_{a+1}))^{-1}W$ and $\ell_a:=\ell_{N^{(a)}}^{W_a}$.
Then, we define $\Phi^{\mathrm{id}}:\R_{\ge0}\to\Z_{\ge0}$ as
\begin{equation*}
    \Phi^{\mathrm{id}}(g) = \begin{cases}
        1+\sum_{i=1}^{a}\left(\left|N^{(i-1)}\triangle N^{(i)}\right|+1\right)+C_\sigma(\ell_a,[g_{a+1}/g,\infty)), &\text{ if }g>0\\
        0, &\text{ if }g=0.
    \end{cases}
\end{equation*}

\begin{proposition}\label{prop:ideal-potential}
    Let $g\in[0,g_r]$. Then, $\Phi^{\mathrm{id}}(g)$ is bounded from above by $6\sum_{i\in [n]}\slc_{\eta}(\mx_i,g_r)$.
    Further, $\Phi^{\mathrm{id}}(g)\ge0$, and $\Phi^{\mathrm{id}}(g)=0$ if and only if $g=0$.
    Finally, $\Phi^{\mathrm{id}}(g)$ is monotonically non-increasing as $g$ decreases, and $\Phi^{\mathrm{id}}(g)<\Phi^{\mathrm{id}}(g')$ whenever $g<g'$ and there exists $j\in[r]$ with $g_j\in(g,g')$.
\end{proposition}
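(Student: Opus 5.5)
We go through the claims of \Cref{prop:ideal-potential} one at a time, writing $\Sigma:=\sum_{i\in[n]}\slc_{\eta}(\mx_i,g_r)$.

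\textbf{Upper bound.} Fix $g>0$ with associated index $a\le r-1$. The middle sum is
\[
\sum_{i=1}^a\bigl(|N^{(i-1)}\triangle N^{(i)}|+1\bigr)\le \sum_{i\in[r-1]}|N^{(i-1)}\triangle N^{(i)}| + a .
\]
Part~\ref{item-bound} of \Cref{thm:polarized-approx} bounds the first term by $2\Sigma$, and $a\le r-1\le 2\Sigma-1$. The counting term satisfies $C_\sigma(\ell_a,\cdot)\le\dim(\pi_{N^{(a)}}(W_a))\le n$. This $n$ is too weak, so we must bound it by something like $|N^{(a)}|$ and charge it to the slc sum. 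The natural route is as follows. Every $i\in N^{(a)}$ has $\mx_i$ scaling linearly on $[g_a,g_{a+1}]$, so each such coordinate contributes at least one piece to its slc decomposition. We therefore expect $|N^{(a)}|\le 2\Sigma$, possibly after using that the first interval $[0,g_1]$ forces non-constant behaviour. Combining gives $1+2\Sigma+2\Sigma-1+2\Sigma\le 6\Sigma$. This charging of $|N^{(a)}|$ is the step I expect to be delicate. Failing a clean argument, I would use $C_\sigma(\ell_a,[g_{a+1}/g,\infty))\le|N^{(a)}|$ together with $|N^{(a)}|\le\sum_{i\le a}|N^{(i-1)}\triangle N^{(i)}|+|N^{(0)}|$, and bound $|N^{(0)}|\le\Sigma$, since each coordinate in $N^{(0)}$ needs a nonconstant piece near $0$.

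\textbf{Nonnegativity and zero.} Nonnegativity is immediate. For $g>0$ we have $\Phi^{\mathrm{id}}(g)\ge1$, so $\Phi^{\mathrm{id}}(g)=0$ exactly when $g=0$.

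\textbf{Monotonicity.} Let $0<g<g'$.

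\emph{Same interval.} If $g$ and $g'$ have the same index $a$, then $[g_{a+1}/g,\infty)\subseteq[g_{a+1}/g',\infty)$. Hence $C_\sigma$ can only decrease as $g$ decreases, and the other terms are unchanged.

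\emph{Different intervals.} If $g$ has index $a<a'$, the index of $g'$, then $g_{a+1}\in(g,g')$ when $g_{a+1}<g'$; the case $g_{a+1}=g'$ is handled similarly. It suffices to compare adjacent indices $a$ and $a+1$, with $\Phi^{\mathrm{id}}(g)$ evaluated at index $a$ and $\Phi^{\mathrm{id}}(g')$ at index $a+1$. We need to show
\[
C_\sigma(\ell_a,[g_{a+1}/g,\infty)) < |N^{(a)}\triangle N^{(a+1)}|+1+C_\sigma(\ell_{a+1},[g_{a+2}/g',\infty)).
\]
The main obstacle is relating $\ell_a$ and $\ell_{a+1}$, which differ both in the partition and in the scaling: $W_a$ uses $h(g_{a+1})$ and $W_{a+1}$ uses $h(g_{a+2})$.

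To handle the partition change, use \Cref{lemma:partition-change}: $\sigma_i(\ell_{N^{(a)}}^{W})\ge\sigma_{i+\delta}(\ell_{N^{(a+1)}}^{W})$ with $\delta:=|N^{(a)}\triangle N^{(a+1)}|$.

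To handle the scaling change, use parts~\ref{item-B} and~\ref{item-N}. On $[g_{a+1},g_{a+2}]$, $h$ is constant on $B^{(a+1)}$ and scales by $\mu/g_{a+1}$ on $N^{(a+1)}$. Thus $h(g_{a+2})=\mathrm{diag}$ of $1$ on $B^{(a+1)}$ and of $g_{a+2}/g_{a+1}$ on $N^{(a+1)}$, applied to $h(g_{a+1})$. Scaling all of $N$ by a common factor $\beta=g_{a+2}/g_{a+1}$ scales the lifting operator exactly by $\beta$. So the singular values of $\ell^{W_{a+1}}_{N^{(a+1)}}$ equal $\beta$ times those of $\ell^{W_a}_{N^{(a+1)}}$, and this rescaling is exactly compensated by replacing the threshold $g_{a+2}/g'$ with $g_{a+1}/g'$.

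It follows that $C_\sigma(\ell_{a+1},[g_{a+2}/g',\infty))=C_\sigma(\ell^{W_a}_{N^{(a+1)}},[g_{a+1}/g',\infty))$. Since $g'\ge g_{a+1}$, the threshold is at most $1$, so this count includes every singular value that is at least $1$. By the partition-change shift, $C_\sigma(\ell_a,[g_{a+1}/g,\infty))\le\delta+C_\sigma(\ell^{W_a}_{N^{(a+1)}},[g_{a+1}/g,\infty))$, and $g_{a+1}/g\ge g_{a+1}/g'$. Together with the extra $+1$, this gives the strict inequality.

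Chaining the adjacent comparisons over intermediate indices gives both non-increase and strict decrease whenever some $g_j$ lies in $(g,g')$. The case $g=0$ is trivial.
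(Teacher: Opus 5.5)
Your proof is correct and is essentially the paper's argument: within a polarized interval the count term can only shrink, and across a breakpoint you rescale the later lifting operator back to the common breakpoint using parts~\ref{item-B} and~\ref{item-N} of \Cref{thm:polarized-approx}, pay for the change of partition with \Cref{lemma:partition-change}, and get strictness from the $+1$, exactly as the paper does when it compares $g$ with $g_a$. (You need the lemma with the two partitions in the opposite order from the one you wrote, which is harmless since it holds for any pair.)

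The step you flagged as delicate is not: each $\slc_{\eta}(\mx_i,g_r)\ge 1$, so $n\le\Sigma$, and the crude bound $C_\sigma\le n$ already gives $1+2\Sigma+(2\Sigma-1)+\Sigma\le 6\Sigma$, which is what the paper uses. Your fallback through $|N^{(0)}|$ would only yield $7\Sigma$.
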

\begin{proof}
By \Cref{thm:polarized-approx}, part~\ref{item-bound}, $\Phi^{\mathrm{id}}(g)$ is bounded from above by 
\begin{equation*}
    1+2\sum_{i\in [n]}\slc_{\eta}(\mx_i,g_r)+r+C_\sigma(\ell_a,[0,\infty))\le 6\sum_{i\in [n]}\slc_{\eta}(\mx_i,g_r)
\end{equation*}
for all $g\in[0,g_r]$. 
By definition, $\Phi^{\mathrm{id}}(0)=0$. 
Further, all components of the function are non-negative.
If $g>0$, then $\Phi^{\mathrm{id}}(g)$ is at least $1$.

It follows directly from the definition of $\Phi^{\mathrm{id}}(g)$ that it is monotonically non-increasing as $g$ decreases within a fixed polarized interval.
Therefore, it remains to show for all $g\in(0,g_r]$ that $\Phi^{\mathrm{id}}(g_a)<\Phi^{\mathrm{id}}(g)$, where $a := \max\{i\in \Z:g_i<g\}$.
Since $\Phi^{\mathrm{id}}(g)=0$ if and only if $g=0$, we may assume that $a\ge1$.

On a high level, observe that when the gap drops to $g_a$, the partition in the definition of the potential changes from $N^{(a)}$ to $N^{(a-1)}$. Thus, by \Cref{lemma:partition-change}, the second term of $\Phi^{\mathrm{id}}(g)$ may increase by $|N^{(a-1)}\triangle N^{(a)}|$, but this is made up by the first term decreasing by $|N^{(a-1)}\triangle N^{(a)}|+1$.
To be more precise, we compute 
\begin{equation}\label{eq:potential-change}
    \Phi^{\mathrm{id}}(g)-\Phi^{\mathrm{id}}(g_a)=|N^{(a-1)}\triangle N^{(a)}|+1+C_{\sigma}(\ell_a,[g_{a+1}/g,\infty)])-C_\sigma(\ell_{a-1},[1,\infty)]).
\end{equation}
By \Cref{lemma:singular-value-scaling} and \Cref{thm:polarized-approx}, part~\ref{item-B} and~\ref{item-N}, we have that 
$\sigma(\ell_a)=\sigma(\ell_{N^{(a+1)}}^{\mathrm{diag}(h(g))^{-1}W})\cdot g_{a+1}/g$. 
Thus, we get that
\begin{align*}
C_\sigma(\ell_a,[g_{a+1}/g,\infty))&\ge C_\sigma(\ell_a,[g_{a+1}/g_a,\infty))\\
&=C_\sigma(\ell^{\mathrm{diag}(h(g_a))^{-1}W}_{N^{(a+1)}},[1,\infty)])\\
&\ge C_\sigma(\ell_{N^{(a)}}^{\mathrm{diag}(h(g_a))^{-1}W},[1,\infty))-|N^{(a-1)}\triangle N^{(a)}|\tag{by \Cref{lemma:partition-change}}\\
&=C_\sigma(\ell_{a-1},[1,\infty))-|N^{(a-1)}\triangle N^{(a)}|
\end{align*}
Inserting this into~\eqref{eq:potential-change} yields the desired result.
\end{proof}

\begin{theorem}\label{thm:amortized}
Algorithm~\ref{alg:implementation} terminates in 
\[O\left(n^2\log n \sum_{i\in[n]}\slc_{1/2}(x_i^\mcp,g(x^{(0)}))\right)\]
\textsc{Ratio-Circuit} augmentation steps.
\end{theorem}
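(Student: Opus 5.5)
The proof will amortize with the ideal potential $\Phi^{\mathrm{id}}$ of \Cref{prop:ideal-potential}, instead of the coarser potential $\Phi$ from the proof of \Cref{thm:main_weak}. I will use the decomposition of \Cref{thm:polarized-approx} with $\eta=1/2$. The potential starts at most $6\sum_{i}\slc_{1/2}(\mx_i,g(x^{(0)}))$, is nonincreasing as the gap decreases, and is zero exactly at optimality. It therefore suffices to show that each execution of the main while loop of Algorithm~\ref{alg:implementation} strictly decreases $\Phi^{\mathrm{id}}$, or else that $O(1)$ consecutive executions do. One execution makes $O(n^2\log n)$ calls to \textsc{Ratio-Circuit}: $O(n\log f_1(n))$ short steps, at most $n$ long steps, and $2n+2$ auxiliary calls. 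This gives the claimed bound. The monotonicity is safe because every step is a Wallacher step, an \eqref{sys:long_steps_lp} step with nonpositive objective, or an auxiliary call that does not move $x$.

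Fix an execution, let $x^{(t_0)}$ be its starting point, and let $a$ be the index of the polarized interval $[g_a,g_{a+1}]$ containing $g^{(t_0)}$. If the short steps already push the gap below $g_a$, then \Cref{prop:ideal-potential} gives a strict decrease. Otherwise $g^{(t)}\in[g_a,g_{a+1}]$, so the guessed partition is correct by \Cref{guess_partition}. The main point is to relate the computed singular values $\tilde\sigma$ of $\ell^{\tilde W}_{\tilde N}$ to the singular values of $\ell_a=\ell^{W_a}_{N^{(a)}}$ with $W_a=\mathrm{diag}(h(g_{a+1}))^{-1}W$.

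I will do this as follows. By items \ref{item-B}, \ref{item-N} and \ref{item-approx} of \Cref{thm:polarized-approx}, $h(g^{(t_0)})$ approximates $\mx(g^{(t_0)})$ within a factor of $4$, and \eqref{mcp_approx} approximates it within $O(n)$. By \Cref{lemma:singular-value-scaling}, $\tilde\sigma$ then agrees up to $O(n)$ factors with $\sigma(\ell^{\mathrm{diag}(h(g^{(t_0)}))^{-1}W}_{N^{(a)}})$. By the same scaling identity used in the proof of \Cref{prop:ideal-potential}, this equals $(g^{(t_0)}/g_{a+1})\,\sigma(\ell_a)$. Consequently the index $k$ chosen in line~\ref{alg2:k} satisfies $\tilde g\cdot\mathrm{poly}(n)\ge g_{a+1}/\sigma_k(\ell_a)$, and the \textsc{Long-Steps-Forced} target $\hat g$ lies below the breakpoint $g_{a+1}/\sigma_{k}(\ell_a)$ up to a $\mathrm{poly}(n)$ factor. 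At the start, the gap satisfies $g^{(t)}>g_{a+1}/\sigma_{k+1}(\ell_a)\cdot(\text{poly}(n))^{-1}$.

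By \Cref{scaling-up} and the preceding lemmas of \Cref{sec:implementation}, either \eqref{eq:proj_error} fails, or the long steps bring the gap to at most $f_1(n)\hat g$ in at most $n$ iterations. If \eqref{eq:proj_error} fails, the gap is already within $f_1(n)\hat g$. In both cases, the next execution's $O(n\log f_1(n))$ short steps (\Cref{iterated-geometric-progress}) take the gap below $g_{a+1}/\sigma_k(\ell_a)$, or below $g_a$. Either event strictly decreases $C_\sigma(\ell_a,[g_{a+1}/g,\infty))$ or crosses a polarization point, so $\Phi^{\mathrm{id}}$ drops within two executions.

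The main obstacle is index bookkeeping. The approximation factors must be polynomial and absorbed by the constants $f_1,f_2$, and the factor $4n^2\lambda^{(0)}$ in the choice of $k$ must shift the index by at most a constant amount relative to $C_\sigma(\ell_a,\cdot)$. If that cannot be made exact, the fallback is to show that an execution never moves the potential by fewer than one unit per $O(1)$ executions, charging any extra executions to the $\mathrm{poly}(n)$-wide slack. The case $k=\dim$ is handled by \Cref{smallest_singular_value}.
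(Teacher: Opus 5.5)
Your plan is the paper's proof: the same potential $\Phi^{\mathrm{id}}$, the same two-execution window, the same comparison of $\tilde\sigma$ with $\sigma(\ell_a)$ via \Cref{lemma:singular-value-scaling} and \Cref{thm:polarized-approx}, and the same use of \Cref{scaling-up} and \Cref{smallest_singular_value}. However, the step you flag as ``the main obstacle'' is left open, and it is exactly where a hypothesis of the key lemmas has to be checked. \Cref{scaling-up} needs the upper half of \eqref{eq:deep}, and the construction of $y^{(0)}$ needs \eqref{eq:right_distance} (via \Cref{lem:lifting_cost}). Both say that $g^{(t)}$ lies a large $\mathrm{poly}(n)$ factor \emph{below} the breakpoint $g_{a+1}/\sigma_{k+1}(\ell_a)$.

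Algorithm~\ref{alg:implementation} takes no extra Wallacher steps after computing $k$ to enforce this. The choice of $k$ in line~\ref{alg2:k} only gives $g^{(t)}/(64n^4)\le g_{a+1}/\sigma_{k+1}(\ell_a)$ and $g_{a+1}/\sigma_k(\ell_a)<4g^{(t)}$; in your sentence about $\sigma_k$, $\tilde g$ should read $g^{(t)}$. Your stated inequality for $\sigma_{k+1}$ is not implied by the choice of $k$, which only bounds $\sigma_{k+1}$ from above, and it points the wrong way anyway. ``Charging to the slack'' is not yet an argument.

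The missing idea is this. The uncertainty in $k$ is only a $\mathrm{poly}(n)$ window around $g^{(t)}$, while the short-step phases just before and just after reduce the gap by $4f_1(n)\gg\mathrm{poly}(n)$. So a misplaced breakpoint is always crossed by one of them. Concretely, let $\beta:=g_{a+1}/\sigma_{k+1}(\ell_a)$ and consider three cases.
\begin{enumerate}
\item If $\beta\le g^{(t)}$, then $\beta\ge g^{(t)}/(64n^4)$ and $C_\sigma(\ell_a,[g_{a+1}/g^{(t_0)},\infty))\ge k+1$. The next execution's short steps take the gap below $g^{(t)}/(4f_1(n))<\beta$, so $\Phi^{\mathrm{id}}$ drops.
\item If $g^{(t)}<\beta\le g^{(t_0)}$, then $\Phi^{\mathrm{id}}$ already dropped during the current execution.
\item Otherwise $\beta>g^{(t_0)}\ge 4f_1(n)\,g^{(t)}$. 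After normalizing at $g^{(t_0)}$ this gives $\sigma_{k+1}<4$, hence \eqref{eq:right_distance} and the upper half of \eqref{eq:deep}. Now \Cref{scaling-up} applies. Since $g_{a+1}/\sigma_k(\ell_a)<4g^{(t)}\le g^{(t_0)}$, getting below $\max\{g_a,g_{a+1}/\sigma_k(\ell_a)\}$ in the next execution is a strict decrease, as you argue.
\end{enumerate}
This is the paper's argument that $k=k^{\mathrm{id}}$ unless $\Phi^{\mathrm{id}}$ drops anyway.
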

\begin{proof}
    In the context of this proof, an \emph{iteration} of Algorithm~\ref{alg:implementation} refers to an iteration of the main while loop (Line~\ref{alg2:main-while}).
    We begin by observing that every iteration performs $O(n^2\log(n))$ \textsc{Ratio-Circuit} augmentation steps.
    Further, any augmentation step $z^*$ is computed as a solution to \textsc{Ratio-Circuit}$(A,c,\zero,1/x)$ or to~\eqref{sys:long_steps_lp}, which satisfies $\langle c,z^*\rangle\le0$. 
    Therefore, the optimality gap of the iterates are monotonically nonincreasing.

    Let $x\in P$ be the iterate at the start of some iteration, and assume that $x$ is not optimal.
    Let $x'\in P$ be the iterate two iterations later (we set $x'$ as the returned optimal solution if there is only one iteration remaining).
    It suffices to prove that 
    \begin{equation}
        \Phi^{\mathrm{id}}(g(x'))<\Phi^{\mathrm{id}}(g(x)).\label{potential-decrease}
    \end{equation}
    This is because by \Cref{prop:ideal-potential},  $\Phi^{\mathrm{id}}(g(x)) = O(\sum_{i\in[n]}\slc_{1/2}(x_i^\mcp,g(x^{(0)})))$, and $\Phi^{\mathrm{id}}(g(x)) = 0$ implies that $x$ is optimal.

    Let us apply \Cref{thm:polarized-approx} with $\eta=1/2$.
    We obtain a polarized decomposition $g_0,\dots,g_r$ with associated polarized partitions $(B^{(j)},N^{(j)})$ and polarized approximation $h_i$ of $x^\mcp_i$ for all $i\in [n]$.
    Let $g:=g(x)$ be the gap of $x$, and let $[g_a,g_{a+1}]$ be the polarized interval containing $g$.
    We define $W_a:=\mathrm{diag}(h(g_{a+1}))^{-1}W$ and $\ell^{\mathrm{id}}_a:=\ell_{N^{(a)}}^{W_a}$.
    As in \Cref{sec:bound}, we let $k^{\mathrm{id}}:=C_\sigma(\ell^{\mathrm{id}}_a,[g_{a+1}/g,\infty))$ and set $\hat g^\mathrm{id}:=\max\{g_a,g_{a+1}/\sigma_{k^\mathrm{id}}(\ell_a^\mathrm{id})\}$ if $k^{\mathrm{id}}>0$, and $\hat g^\mathrm{id}:=g_a$ otherwise.
    Note that by \Cref{prop:ideal-potential}, $\hat g^\mathrm{id}$ is the threshold gap at which $\Phi^{\mathrm{id}}$ decreases by at least $1$.
    
    If $g/\hat g^\mathrm{id}\le 4f_1(n)$, then~\eqref{potential-decrease} holds by \Cref{iterated-geometric-progress}, since the algorithm performs at least $n\log(4f_1(n))$ augmentation steps given by \textsc{Ratio-Circuit}$(A,c,\zero,1/x)$ (Line~\ref{alg2:slow-for}).
    Otherwise, $g/\hat g^\mathrm{id}> 4f_1(n)$.
    We note that by \Cref{lem:gap_approx}, $\lambda$ is an $n$-approximation of $g(x)$.
    We approximate the max central path twice, and perform at least $n\log(4f_1(n))$ augmentation steps given by \textsc{Ratio-Circuit}$(A,c,\zero,1/x)$ (Line~\ref{alg2:slow-for}) in between.    
    If the gap of the resulting iterate is not in $[g_a,g_{a+1}]$, then the potential function has decreased. 
    Thus, we can assume from now on that the gap is contained in $[g_a,g_{a+1}]$ and it follows from \Cref{guess_partition} that $(\tilde B,\tilde N)=(B^{(a)},N^{(a)})$.

    Consider the lifting operator $\ell_{\tilde N}^{W'}$, where $W' := \mathrm{diag}(\mx(g))^{-1}W$.
    Let $\sigma := \sigma(\ell^{W'}_{\tilde N})$ be its singular values.
    From \Cref{thm:polarized-approx}~\ref{item-approx}, we know that $h(g)\leq \mx(g) \leq 4h(g)$.
    Hence, applying \Cref{lemma:singular-value-scaling} yields
    \begin{equation}\label{singular-values-algorithm1}
        \frac{\sigma(\ell^{\mathrm{id}}_a)}{g_{a+1}}=\frac{\sigma(\ell^{\mathrm{diag(h(g))^{-1}W}}_{N^{(a)}})}{g} \leq \left\|\frac{h_B(g)}{\mx_B(g)}\right\|_\infty \left\|\frac{\mx_N(g)}{h_N(g)}\right\|_\infty \frac\sigma g  \leq \frac{4\sigma}{g},
    \end{equation}
    where the first equality follows from \Cref{thm:polarized-approx}, part~\ref{item-B} and~\ref{item-N}.
    By using the other inequality in \Cref{lemma:singular-value-scaling}, we obtain that 
    \begin{equation}\label{singular-values-algorithm2}
    \frac{\sigma(\ell^{\mathrm{id}}_a)}{g_{a+1}}\geq \frac{\sigma}{4g}.
    \end{equation}

    In the next step, we argue that $k^{\mathrm{id}}=k$ in the case that the potential function has not decreased, and the current iterate still requires more than $n\log(4f_1(n))$ augmentation steps computed by \textsc{Ratio-Circuit}$(A,c,\zero,1/x)$ to decrease the potential function.
    Recall that 
    \[k^{\mathrm{id}}=C_\sigma(\ell^{\mathrm{id}}_a,[g_{a+1}/g,\infty))=\max\{i:\sigma_i(\ell_a^{\mathrm{id}})\ge\frac{g_{a+1}}{g}\}=\max\{i:\sigma_i(\ell_{N^{(a)}}^{\mathrm{diag}(h(g))^{-1}W})\ge1\},\]
    where the second equality follows from \Cref{thm:polarized-approx}, part~\ref{item-B} and~\ref{item-N}.
    In contrast, the algorithm computes 
    \[k:=\max\{i:\tilde\sigma_i>\frac{\lambda^{(0)}}{\lambda^{(1)}}4n^2\}.\]
    Observe that by \Cref{iterated-geometric-progress} and \Cref{lem:gap_approx}, $\lambda^{(0)}/\lambda^{(1)}\ge 4f_1(n)/n^2$.
    Since $\sigma(\ell_{N^{(a)}}^{\mathrm{diag}(h(g))^{-1}W})$ and $\tilde\sigma$ satisfy 
    \[\frac{1}{16n}\cdot\tilde\sigma\le\sigma(\ell_{N^{(a)}}^{\mathrm{diag}(h(g))^{-1}W})\le16n\cdot\tilde\sigma\]
    by~\eqref{eq:singular_values},~\eqref{singular-values-algorithm1} and~\eqref{singular-values-algorithm2}, we obtain that $k\le k^{\mathrm{id}}$ in case the potential function has not decreased.
    A symmetric argument shows that if the next $n\log(4f_1(n))$ augmentation steps computed by \textsc{Ratio-Circuit}$(A,c,\zero,1/x)$ do not suffice to decrease the potential function, then $k\ge k^{\mathrm{id}}$.
    Thus, we can assume without loss of generality that $k=k^{\mathrm{id}}$.
    Therefore, if $k=\dim(\pi_{\tilde N}(W))$, then also $k^{\mathrm{id}}=\dim(\pi_{\tilde N}(W))$  and~\eqref{potential-decrease} holds by \Cref{iterated-geometric-progress} and \Cref{smallest_singular_value}.

    Let $\hat g:= \max\{g_a, g_{a+1}/\sigma_{k}\}$.
    By~\eqref{singular-values-algorithm2}, we have $\hat g^{\mathrm{id}}\geq \hat g/4$.
    We compute $\bar x$ and $y^{(0)}$, satisfying Properties \ref{cond:x_gap}, \ref{cond:x_lbound}, \ref{cond:y_gap}, \ref{cond:y_lbound}.
    Recall that we can assume that the gap of the current iterate is still in $[g_a, g_{a+1}]$.  
    Thus, by \Cref{scaling-up}, the iterate achieves a gap of at most $f_1(n)\cdot\hat g$ after running Algorithm~\ref{alg:long-steps-forced}.
    Further, by \Cref{thm:polarized-approx}~\ref{item-approx} and \Cref{lemma:singular-value-scaling}, $\hat g/4\le\hat g^\mathrm{id}$. 
    Therefore, $n\log(4f_1(n))$ augmentation steps computed by \textsc{Ratio-Circuit}$(A,c,\zero,1/x)$ suffice in order to reduce the gap of the iterate to at most $\hat g^\mathrm{id}$, which will happen in the next iteration of Line~\ref{alg2:slow-for}.
    This proves the last case of~\eqref{potential-decrease} and concludes the proof of the theorem.    
\end{proof}

\section{Acknowledgments}

Part of this work was completed while the first and third author were participating in the 
Oberwolfach workshop on "Combinatorial Optimization", November 10-14, 2024. 
Part of this work was completed while the second author visited CWI in Amsterdam, supported by the Fonds de la Recherche Scientifique - FNRS.
Daniel Dadush and Zhuan Khye Koh were supported by European Research Council grant 805241-QIP. 
Stefan Kober was supported by Fonds de la Recherche Scientifique - FNRS through research project BD-DELTA-3 (PDR 40028812).

\bibliographystyle{alpha}
\bibliography{references}

\newpage 

\appendix

\section{Omitted Proofs}

\ratiocircuitelementary*

\begin{proof}
We start by proving the following simple proposition.

\begin{proposition}\label{pro:circuit_lp}
Given a matrix $A\in \R^{m\times n}$, a vector $c\in \R^n$ and a scalar $\alpha\in \R$, consider the following polyhedron
\[P := \left\{x\in \R^n: Ax = \zero,\; \pr{c}{x}\leq \alpha,\; x\geq \zero\right\}.\]
Every extreme point of $P$ is either $\zero$ or an elementary vector of $A$.
\end{proposition}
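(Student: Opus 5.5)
The plan is the standard support-minimality argument for extreme points of a pointed polyhedral cone cut by one inequality. Let $x$ be an extreme point of $P$ with $x\neq\zero$, and write $I:=\supp(x)$. First, $P\subseteq\R^n_{\geq 0}$, so $x\geq\zero$ and $x\in\ker(A)\setminus\{\zero\}$. It remains to show that $x$ is support-minimal in $\ker(A)$.

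Suppose not. Then there is an elementary vector $h\in\ker(A)$ with $\supp(h)\subsetneq I$; such an $h$ exists because any nonzero kernel vector with smaller support contains a circuit in its support. Possibly after replacing $h$ by $-h$, we may assume $h$ has a positive entry. Since $x_I>\zero$, for small $\varepsilon>0$ both $x\pm\varepsilon h$ remain nonnegative and lie in $\ker(A)$. The only issue is the constraint $\pr{c}{x}\le\alpha$, which is handled by splitting into cases.

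\textbf{Case 1: $\pr{c}{x}<\alpha$.} Then $x\pm\varepsilon h\in P$ for small $\varepsilon$. Their midpoint is $x$ and they differ, which contradicts extremality.

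\textbf{Case 2: $\pr{c}{x}=\alpha$.} Consider instead the direction $d:=h-\beta x$ with $\beta:=\pr{c}{h}/\alpha$ when $\alpha\neq0$. Then $\pr{c}{d}=0$, $d\in\ker(A)$, and $\supp(d)\subseteq I$. Also $d\neq\zero$: since $\supp(h)\subsetneq I$, $h$ is not a multiple of $x$. Hence $x\pm\varepsilon d\in P$, again a contradiction.

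\textbf{Case 3: $\alpha=0=\pr{c}{x}$.} If $\pr{c}{h}=0$, use $d=h$ as in Case 1. Otherwise, choose the sign of $\pm h$ so that $\pr{c}{h}<0$. Then $x+\varepsilon h$ and $x-\varepsilon h'$, for a suitable combination, are not immediately both feasible. The cleaner route is to use the scaling structure directly: $x=\tfrac12(2x)+\tfrac12\cdot\zero$ with $2x,\zero\in P$. Since $\pr{c}{2x}=0\le 0$, this contradicts extremality of $x\neq\zero$. So when $\alpha=0$, the only extreme point is $\zero$, and the claim holds trivially.

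\textbf{Obstacle.} The main delicate step is Case 2, ensuring $d\neq\zero$ and that $\supp(d)\subseteq I$ so that small perturbations preserve $x\ge\zero$. Both follow from $\supp(h)\subsetneq\supp(x)$.

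With the proposition in hand, \Cref{ratio-circuit-elementary} follows by applying it to the LP reformulation \eqref{sys:min_ratio_lp} in the variables $(z^+,z^-)$ with matrix $[A,-A]$. A basic optimal solution is an extreme point, hence zero or an elementary vector of $[A,-A]$. Such a vector is either of the form $e_i+e_{n+i}$, which gives $z=\zero$, or has disjoint supports on the $z^+$ and $z^-$ parts and corresponds to an elementary vector $z$ of $A$. Finally, by scaling invariance, optimality within the ray normalization gives the minimum-ratio property.
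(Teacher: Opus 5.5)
Your proof is correct. It uses the same mechanism as the paper's: perturb $x$ in both directions along a nonzero vector of $\ker(A)$ that is supported in $\supp(x)$ and leaves $\pr{c}{\cdot}$ unchanged. What differs is where that vector comes from.

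The paper invokes conformal decomposability to write $x=p+q$, with $p,q\geq\zero$ linearly independent in $\ker(A)$. It then moves along $p$ if $\pr{c}{p}=0$, and along $p-\frac{\pr{c}{p}}{\pr{c}{q}}q$ otherwise. This needs no distinction between a slack and a tight constraint, nor between $\alpha=0$ and $\alpha\neq 0$.

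You use only the definition of non-elementarity: some $h\in\ker(A)$ of arbitrary signs with $\supp(h)\subsetneq\supp(x)$. You then work in $\mathrm{span}\{x,h\}$, and nonnegativity of the perturbed points comes from $\supp(h)\subseteq\supp(x)$ and small $\varepsilon$. This avoids the conformal decomposition theorem, which makes it more elementary, but it costs you a case analysis. When $\pr{c}{x}=0\neq\pr{c}{h}$, the only objective-neutral directions in $\mathrm{span}\{x,h\}$ are multiples of $x$. That is exactly why Case 3 needs the scaling argument $x=\tfrac12(2x)+\tfrac12\zero$.

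Your three cases can be collapsed into two, independent of tightness:
\begin{itemize}
\item if $\pr{c}{x}\neq 0$, use $d=h-\frac{\pr{c}{h}}{\pr{c}{x}}x$;
\item if $\pr{c}{x}=0$, use $d=x$, which is valid because then $\alpha\geq 0$.
\end{itemize}

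Finally, delete the abandoned sentence at the start of Case 3 about $x-\varepsilon h'$ for a suitable combination. The scaling argument after it is complete on its own.
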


\begin{proof}
Let $x\neq \zero$ be an extreme point of $P$.
Clearly, $x\in \ker(A)$.
For the purpose of contradiction, suppose that $x\notin \elementary{A}$.
By conformal decomposability, we can write $x = p + q$ for two nonnegative and linearly independent vectors $p,q\in \ker(A)\setminus \{\zero\}$.
First, we claim that $\pr{c}{p},\pr{c}{q}\neq 0$.
Suppose for a contradiction that $\pr{c}{p} = 0$ say.
Let $y := 0.5p + q$ and $z := 1.5p + q$; note that $y$ and $z$ are distinct points in $P$.
Since $x = 0.5y + 0.5z$, it follows that $x$ is not an extreme point; a contradiction.

Next, let 
\[y' := (1+\varepsilon)p + \left(1-\frac{\pr{c}{p}}{\pr{c}{q}}\varepsilon\right)q \qquad \qquad z' := (1-\varepsilon)p + \left(1+\frac{\pr{c}{p}}{\pr{c}{q}}\varepsilon\right)q\]
for a sufficiently small $\varepsilon>0$ such that $y',z'\in P$.
It is easy to check that $y'\neq z'$.
Since $x = y' + z'$, we arrive at the same contradiction again.
\end{proof}

Now, let $(z^+,z^-)\neq \zero$ be a basic optimal solution to \eqref{sys:min_ratio_lp}.
From \Cref{pro:circuit_lp}, we know that $(z^+,z^-)\in \elementary{[A, -A]}$.
If $z^+_i, z^-_i>0$ for some $i\in [n]$, then it follows from support minimality that $z^+_i = z^-_i$ and $z^+_j = z^-_j = 0$ for all $j\in [n]\setminus i$.
In this case, the corresponding solution $z$ to \eqref{sys:min_ratio} is $\zero$.
Otherwise, we have $z^+_i = 0$ or $z^-_i = 0$ for all $i\in [n]$.
So, the corresponding solution $z$ to $\eqref{sys:min_ratio}$ is an elementary vector of $A$.
\end{proof}

\projections*

\begin{proof}
From the definition of $z^p$, we have $z^p_N = \Pi_{V_d}(z_N)$.
Let $z^\perp_N:= \Pi_{V^\perp_d}(z_N)$, note that $z^p_N + z^\perp_N = z_N$ is an orthogonal decomposition.
It is left to show that $\|z^\perp_N\|\leq \|z_B\|/\sigma_k$.

Since $z^p_N\in V_d$ and $z^\perp_N\in V^\perp_d$, $\ell^W_N(z^p_N)$ and $\ell^W_N(z^\perp_N)$ are generated by two sets $U_1$ and $U_2$ of left singular vectors respectively such that $\pr{u_1}{u_2} = 0$ for all $u_1\in U_1$ and $u_2\in U_2$. 
It follows that $\ell^W_N(z^p_N)$ and $\ell^W_N(z^\perp_N)$ are orthogonal.
Hence,
\begin{equation*}
 \|z_B\| \geq \|\ell_N^W(z_N)\| = \|\ell_N^W(z_N^p+z_N^\perp)\| = \|\ell_N^W(z_N^p) + \ell^W_N(z_N^\perp)\| \geq \|\ell_N^W(z_N^\perp)\| \geq \sigma_k \|z_N^\perp\|, 
\end{equation*}
where the last inequality is due to $\sigma_{\min}(\ell^W_N|{_{V^\perp_d}})\geq \sigma_k$.
Indeed, $V^\perp_d$ is spanned by right singular vectors associated with $\sigma_1, \sigma_2, \dots, \sigma_k$ because $V_d$ is a singular subspace of dimension $d$.
\end{proof}

\end{document}